\documentclass{amsart}
\usepackage{a4}
\usepackage{graphics}
\usepackage{color}
\usepackage{amssymb}
\usepackage[all]{xy}
\usepackage{amsmath}
\usepackage{stmaryrd}
\usepackage{subfigure}
\usepackage{longtable}
\usepackage{setspace}
\usepackage{braket}
\usepackage{hyperref}
\usepackage{booktabs}
\usepackage{enumitem}
\usepackage{tikz}

\CompileMatrices

\newtheorem{theorem}{Theorem}[section]
\newtheorem{lemma}[theorem]{Lemma}
\newtheorem{proposition}[theorem]{Proposition}
\newtheorem{corollary}[theorem]{Corollary}

\theoremstyle{definition}

\newtheorem{definition}[theorem]{Definition}

\newtheorem{construction}[theorem]{Construction}

\newtheorem{remark}[theorem]{Remark}
\theoremstyle{remark}

\numberwithin{equation}{section}
%

%
%

\def\quot{/\!\!/}

\def\rq#1{\widehat{#1}}

\def\b#1{\overline{#1}}
\def\bangle#1{\langle #1 \rangle}

\def\CC{{\mathbb C}}
\def\KK{{\mathbb K}}

\def\ZZ{{\mathbb Z}}

\def\QQ{{\mathbb Q}}
\def\PP{{\mathbb P}}

\def\Cox{\mathcal{R}}
\def\canK{\mathcal{K}}
\def\tuplen{\overline{n}}

\def\Eff{{\rm Eff}}
\def\Mov{{\rm Mov}}

\def\Ample{{\rm Ample}}
\def\SAmple{{\rm SAmple}}
\def\rlv{{\rm rlv}}

\def\cov{{\rm cov}}
\def\trop{{\rm trop}}

\def\Cl{\operatorname{Cl}}

\def\Spec{{\rm Spec}}

\def\conv{{\rm conv}}
\def\cone{{\rm cone}}
\def\lin{{\rm lin}}

\def\cov{{\rm cov}}

\def\relint{{\rm relint}}
\def\shadow{{\rm sw}}
\def\vol{{\rm vol}}

\begin{document}

\title[On terminal Fano 3-folds with a 2-torus action, part II]%
{On terminal Fano 3-folds with a 2-torus action, \\ part II}

\author[M.~Nicolussi]{Michele Nicolussi} 
\address{Mathematisches Institut, Universit\"at T\"ubingen,
Auf der Morgenstelle 10, 72076 T\"ubingen, Germany}

\begin{abstract}
We continue the classification of terminal Fano
threefolds with an effective two-torus action. 
In earlier work we settled the $\QQ$-factorial case with Picard number one. 
Here we treat the larger class of varieties that do not admit
any contraction of a prime divisor; these are called
combinatorially minimal.
\end{abstract}

\maketitle


\section{Introduction}

This article contributes to the classification of Fano varieties,
meaning normal projective varieties with an ample anticanonical divisor.
We work over an algebraically closed field $\KK$ of characteristic zero.

From the point of view of the Minimal Model Program,
the most important subclass of Fano varieties are those having
terminal $\QQ$-factorial singularities.
In dimension two, these are just the smooth del Pezzo surfaces.
In dimension three, we know the list of smooth Fano threefolds
due to Iskovskikh~\cite{Is1,Is2} and Mori/Mukai~\cite{MoMu} but
the singular case is still wide open.
Retricting to toric geometry, Kasprzyk~\cite{Ka} classified
all terminal toric Fano threefolds by means of lattice polytopes,
following an idea of Borisov and Borisov~\cite{BoBo}.
Their combinatorial approach can be outlined in geometrical terms as follows:
\begin{enumerate}
\item
find all terminal $\QQ$-factorial toric Fano threefolds with Picard number one;
\item
determine the maximal possible rank for the divisor class group of terminal toric Fano threefolds
that do not admit any contraction of a prime divisor;
\item
find all such varieties;
\item
``grow'' the varieties found in (i) and (iii) to obtain all terminal toric Fano threefolds.
\end{enumerate}

This paper contributes to the generalization of this approach
to varieties~$X$ of complexity one, i.e.
coming with an effective action of a torus $T$ such that $\dim(T)=\dim(X)-1$ holds.
The foundation was laid in a joint work by the author
together with Bechtold, Hausen and Huggenberger: in~\cite{BeHaHuNi},
a polyhedral complex called the \emph{anticanonical complex}
is associated to any rational Fano variety $X$ of complexity one.
Properties of the singularities (such as terminality) are characterized by
the lattice points inside of the anticanonical complex.
In the same article, item (i) of the approach was generalized:
the anticanonical complex was used to obtain
the full list of terminal $\QQ$-factorial Fano threefolds $X$
of complexity one with Picard number $\rho(X)=1$.

The first main result of this paper is a generalization of
item (ii) for varieties of complexity one.
We denote with $\delta(X)$ the rank of the divisor class group $\Cl(X)$ of a variety $X$.
One always has $\rho(X)\le\delta(X)$,
since the Picard group is a subgroup of $\Cl(X)$.
We call a variety \emph{combinatorially minimal}
if it does not admit any contraction of a prime divisor.

\begin{theorem}
\label{thm:combmin-bound}
Let $X$ be a non-toric rational combinatorially minimal terminal Fano
threefold with an effective two-torus action.
Then $\rho(X)\le\delta(X)\le3$ holds.
\end{theorem}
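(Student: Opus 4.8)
The plan is to exploit the combinatorial description of complexity-one $T$-varieties via the anticanonical complex developed in~\cite{BeHaHuNi}. A non-toric rational Fano threefold $X$ with an effective two-torus action is described by defining data living in a lattice of rank $\delta(X)$, and terminality translates into a precise statement about which lattice points may appear in the anticanonical complex $\mathcal{A}_X$, namely that its only interior lattice point is the origin and the relevant cells contain no further lattice points than their vertices and the origin. My first step would be to recall this dictionary carefully and to express the combinatorial-minimality hypothesis on $X$ in these terms: a contraction of a prime divisor corresponds, on the combinatorial side, to the possibility of removing a ray (a one-dimensional cone, equivalently a vertex of the anticanonical complex) while preserving the Fano and terminality conditions. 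Thus ``combinatorially minimal'' should mean that no such ray-removal is admissible, which pins down the extremal shape of $\mathcal{A}_X$.

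Next I would set up the numerical bookkeeping that relates $\delta(X)$ to the number of rays in the defining fan. For a complexity-one $T$-variety, $\Cl(X)$ fits into an exact sequence whose rank is governed by the number of rays minus the rank of the ambient lattice (here essentially $\dim X=3$), together with a correction coming from the ``vertical'' versus ``horizontal'' rays and the multiplicities $n_0,\dots,n_r$ encoding the generic isotropy data over the rational curve $\mathbb{P}^1$ parametrizing the quotient. Concretely I expect $\delta(X)$ to equal the total number of rays minus $2$ (or a similar small constant), so that bounding $\delta(X)\le 3$ is equivalent to bounding the number of rays by a small explicit number. The inequality $\rho(X)\le\delta(X)$ is already noted in the excerpt, so the whole content is the upper bound on $\delta(X)$.

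The heart of the argument is then a volume/lattice-point estimate: because $X$ is terminal, the anticanonical complex $\mathcal{A}_X$ is a lattice polytopal complex whose maximal cells are lattice-point-free apart from their vertices and whose only interior lattice point is the origin, and because $X$ is combinatorially minimal these cells cannot be subdivided or collapsed without violating this. One then argues that too many rays would force either an extra interior lattice point (contradicting terminality) or an admissible divisorial contraction (contradicting minimality). I would organize this as a case distinction according to the combinatorial type of the variety — the elliptic, parabolic and hyperbolic cases in the language of the source/sink behaviour of the $T$-action, equivalently according to how many of the special fibres of the quotient map contribute rays — and in each case bound the number of contributing rays by classifying the possible lattice configurations near the origin up to the lattice automorphisms acting on the defining data.

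The main obstacle I anticipate is the combinatorial explosion in the case analysis: ruling out every configuration with four or more ``free'' rays requires showing that each such configuration either admits a prime divisorial contraction or creates a forbidden lattice point, and the terminality constraint is only a finite but intricate condition on the cells of $\mathcal{A}_X$. The delicate part is to make this exhaustive without missing degenerate configurations where rays are shared between cells or where the multiplicities $n_i$ conspire to keep the complex lattice-minimal; handling these boundary cases, and showing that minimality genuinely fails whenever $\delta(X)\ge 4$, is where the real work lies. I expect the argument to reduce, after the dictionary is in place, to a bounded search over lattice data that can be carried out by hand in each case.
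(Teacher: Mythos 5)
Your proposal has two genuine gaps that would prevent it from becoming a proof. First, the numerical bookkeeping is wrong at a crucial point: for $X=X(A,P)$ one has $\delta(X)=n+m-(r+s)$ with $s=\dim X-1=2$, so the rank of the ambient lattice is $r+2$, which is \emph{not} a small fixed constant -- it grows with the number $r+1$ of monomial blocks, and a priori $r$ is unbounded. Hence bounding $\delta(X)$ is not equivalent to bounding the number of rays; you must simultaneously bound $r$ and the excess $n+m-(r+1)$ of columns over blocks. Without a bound on $r$, your proposed ``bounded search over lattice data'' is a search in $\QQ^{r+2}$ with $r$ arbitrary, so it is not finite and cannot be carried out by hand. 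The paper obtains the bound on $r$ not from terminality of cells of the anticanonical complex directly, but from log terminality of elementary big cones (the platonic-triple condition of Remark~\ref{rem:logterm2lbound}, via \cite[Corollary~4.8]{BeHaHuNi}), and it first needs Lemma~\ref{lem:exbigcone} to guarantee a big cone exists when $m<\dim(X)$. Second, your dictionary for combinatorial minimality is imprecise in a way that matters: minimality is \emph{not} ``no ray can be removed while preserving Fano and terminality''; it is the statement that no weight is exceptional, equivalently that every extremal ray of $\Eff(X)$ carries the degrees of at least two Cox-ring generators, equivalently $\Mov(X)=\Eff(X)$ (Remark~\ref{rem:combmin}); on the $P$-side a weight is exceptional iff the remaining columns still generate $\QQ^{r+s}$ as a cone (Lemma~\ref{lem:excepweight}). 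No preservation of terminality enters this criterion.

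More fundamentally, the heart of the paper's argument is not a lattice-point or volume estimate on the anticanonical complex at all, but a weight-counting argument on the Gale-dual side, in $\Cl_\QQ(X)$: minimality forces $n+m\ge 2\delta(X)+2\alpha$ (where $\alpha$ measures non-simpliciality of $\Eff(X)$); homogeneity of the trinomial relations together with $\mu\in\Eff(X)^\circ$ forces each monomial built from extremal variables to involve at least $\delta(X)$ distinct variables when $\alpha=0$ (Lemma~\ref{lem:picbound3}), with a refinement via the geometry of cones with $d+1$ rays when $\alpha=1$ (Lemmas~\ref{lem:cone-d+1} and~\ref{lem:picbound4}); plugging these into the dimension formula $\dim(X)=n+m+2-(r+1)-\delta(X)$ yields $\delta(X)\le 3$ after excluding one borderline case $\delta(X)=4$ by a counting contradiction ($n\ge12$ versus $n+m\le10$). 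Notably, this argument needs only log terminality, not terminality -- terminality is reserved for the later classification steps. Your plan inverts the logic: it tries to extract the bound from lattice-point-freeness of $A_X^c$, which is the tool the paper uses sparingly (only to produce a big cone), and it omits the interplay between minimality, the trinomial relations, and the position of $\mu$ in $\Eff(X)$, which is what actually makes the numbers close.
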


This Theorem allows a methodical approach to the classification of
combinatorially minimal terminal Fano threefolds of complexity one,
i.e. the analog of item~(iii).
Our second result is the classification of these varieties in the $\QQ$-factorial case
by means of their Cox rings;
see~\cite[Sec.~1.4]{ArDeHaLa} for a precise formulation of Cox rings.

\begin{theorem}
\label{thm:combmin-list}
Let $X$ be a non-toric rational combinatorially minimal terminal $\QQ$-factorial Fano
threefold with an effective two-torus action
and divisor class group of rank bigger than one.
Then the Cox ring $\Cox(X)$ of $X$ appears in the list below.
The $\Cl(X)$-degrees of the generators $T_1,\ldots,T_r$
are denoted as columns $w_i\in\Cl(X)$ of a matrix $[w_1,\ldots,w_r]$.
\begin{center}
\begin{tabular}[h]{llcc}
\toprule
No.
&
$\Cox(X)$
&
$\Cl(X)$
&
$[w_1,\ldots, w_r]$
\\
\midrule
2.01
&
$\KK[T_1,\ldots,T_6]/\bangle{T_1T_2+T_3T_4+T_5T_6}$
&
$\ZZ^2$
&
$
\left[
\begin{smallmatrix}
1 & 0 & 1 & 0 & 1 & 0 \\
0 & 1 & 0 & 1 & 0 & 1
\end{smallmatrix}
\right]
$
\\
\midrule
2.02
&
$\KK[T_1,\ldots,T_6]/\bangle{T_1T_2^2+T_3T_4^2+T_5T_6^2}$
&
$\ZZ^2$
&
$
\left[
\begin{smallmatrix}
1 & 0 & 1 & 0 & 1 & 0 \\
0 & 1 & 0 & 1 & 0 & 1
\end{smallmatrix}
\right]
$
\\
\midrule
2.03
&
$\KK[T_1,\ldots,T_6]/\bangle{T_1T_2+T_3T_4+T_5T_6}$
&
$\ZZ^2\oplus\ZZ/3\ZZ$
&
$
\left[
\begin{smallmatrix}
1 & 0 & 1 & 0 & 1 & 0 \\
0 & 1 & 0 & 1 & 0 & 1 \\
\overline{2} & \overline{1} & \overline{1} & \overline{2} & \overline{0} & \overline{0}
\end{smallmatrix}
\right]
$
\\
\midrule
2.04
&
$\KK[T_1,\ldots,T_6]/\bangle{T_1T_2^2+T_3T_4^2+T_5T_6^2}$
&
$\ZZ^2\oplus\ZZ/3\ZZ$
&
$
\left[
\begin{smallmatrix}
1 & 0 & 1 & 0 & 1 & 0 \\
0 & 1 & 0 & 1 & 0 & 1 \\
\overline{1} & \overline{1} & \overline{2} & \overline{2} & \overline{0} & \overline{0}
\end{smallmatrix}
\right]
$
\\
\midrule
2.05
&
$\KK[T_1,\ldots,T_6]/\bangle{T_1T_2+T_3^2T_4+T_5T_6}$
&
$\ZZ^2$
&
$
\left[
\begin{smallmatrix}
2 & 0 & 1 & 0 & 1 & 1 \\
0 & 1 & 0 & 1 & 0 & 1
\end{smallmatrix}
\right]
$
\\
\midrule
2.06
&
$\KK[T_1,\ldots,T_6]/\bangle{T_1T_2+T_3T_4^2+T_5^2T_6^2}$
&
$\ZZ^2$
&
$
\left[
\begin{smallmatrix}
1 & 1 & 2 & 0 & 1 & 0 \\
1 & 1 & 0 & 1 & 0 & 1
\end{smallmatrix}
\right]
$
\\
\midrule
2.07
&
$\KK[T_1,\ldots,T_6]/\bangle{T_1T_2T_3+T_4T_5+T_6^2}$
&
$\ZZ^2$
&
$
\left[
\begin{smallmatrix}
1 & 1 & 0 & 2 & 0 & 1 \\
0 & 0 & 2 & 1 & 1 & 1
\end{smallmatrix}
\right]
$
\\
\midrule
2.08
&
$\KK[T_1,\ldots,T_6]/\bangle{T_1T_2T_3+T_4T_5^2+T_6^2}$
&
$\ZZ^2$
&
$
\left[
\begin{smallmatrix}
1 & 1 & 0 & 2 & 0 & 1 \\
1 & 0 & 1 & 0 & 1 & 1
\end{smallmatrix}
\right]
$
\\
\midrule
2.09
&
$\KK[T_1,\ldots,T_6]/\bangle{T_1T_2T_3^2+T_4T_5+T_6^2}$
&
$\ZZ^2$
&
$
\left[
\begin{smallmatrix}
1 & 1 & 0 & 2 & 0 & 1 \\
0 & 0 & 1 & 1 & 1 & 1
\end{smallmatrix}
\right]
$
\\
\midrule
2.10
&
$\KK[T_1,\ldots,T_6]/\bangle{T_1T_2T_3^2+T_4T_5^2+T_6^2}$
&
$\ZZ^2$
&
$
\left[
\begin{smallmatrix}
1 & 1 & 0 & 2 & 0 & 1 \\
0 & 0 & 1 & 0 & 1 & 1
\end{smallmatrix}
\right]
$
\\
\midrule
2.11
&
$\KK[T_1,\ldots,T_6]/\bangle{T_1T_2+T_3^2T_4+T_5^2}$
&
$\ZZ^2$
&
$
\left[
\begin{smallmatrix}
2 & 0 & 1 & 0 & 1 & 1 \\
1 & 1 & 0 & 2 & 1 & 0
\end{smallmatrix}
\right]
$
\\
\midrule
2.12
&
$\KK[T_1,\ldots,T_6]/\bangle{T_1T_2T_3+T_4^2+T_5^2}$
&
$\ZZ^2\oplus\ZZ/2\ZZ$
&
$
\left[
\begin{smallmatrix}
1 & 1 & 0 & 1 & 1 & 0 \\
0 & 0 & 2 & 1 & 1 & 1 \\
\overline{0} & \overline{1} & \overline{1} & \overline{1} & \overline{0} & \overline{0}
\end{smallmatrix}
\right]
$
\\
\bottomrule
\end{tabular}
\end{center}
Any two of the Cox rings listed in the table correspond to non-isomorphic varieties.
Only No.~2.01 and No.~2.02 are smooth.
\end{theorem}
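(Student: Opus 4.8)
The plan is to run the structure theory for Cox rings of rational complexity-one $T$-varieties and to convert each geometric hypothesis into a finite set of combinatorial constraints. Recall that any such $X$ is encoded by defining data $(A,P)$: the Cox ring is a trinomial algebra $\Cox(X)=\KK[T_{ij},S_k]/I(A,P)$, graded by $\Cl(X)$ via the Gale-dual weight matrix, and $X$ is recovered as a good quotient of the set of semistable points of $\overline{X}=\Spec\Cox(X)$ by the characteristic quasitorus $H=\Spec\KK[\Cl(X)]$, the semistability being fixed by an ample class. By Theorem~\ref{thm:combmin-bound} we have $2\le\delta(X)\le3$. First I would rule out $\delta(X)=3$ (it forces $X$ toric, hence is excluded by hypothesis), reducing to $\delta(X)=2$; then $\dim\overline{X}=\dim X+\delta(X)=5$. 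Since $X$ is non-toric, the trinomial presentation must use at least three blocks, so there is at least one relation; being combinatorially minimal and of class-group rank two pins this down to \emph{exactly} three blocks, one trinomial relation, no horizontal variables $S_k$, and therefore six generators $T_1,\dots,T_6$. This is why every entry of the table is a hypersurface $\KK[T_1,\dots,T_6]/\bangle{f}$ with $f$ a trinomial. The combinatorial-minimality hypothesis forbids divisorial contractions; in the GIT picture this says $X$ sits in a chamber none of whose walls induces such a contraction, which I reformulate as a condition on the position of the free parts of the weights $w_i$ relative to the moving cone $\Mov(X)\subseteq\Cl_\QQ(X)$, and it is this condition that excludes the competing configurations with extra blocks or horizontal variables.

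Second, I impose terminality and the Fano property to finitize the search. Following~\cite{BeHaHuNi}, terminality is equivalent to a lattice-point condition on the anticanonical complex $\mathcal{A}_X$: apart from the origin, $\mathcal{A}_X$ contains no lattice points other than its vertices, and the vertices are controlled by the exponents $l_{ij}$ appearing in $f$ together with the entries of the matrix $P$. This simultaneously bounds all exponents and all lattice vectors, turning the remaining problem into a finite search. The Fano condition requires $-K_X$ to lie in the ample cone cut out by the weight data (equivalently, $\mathcal{A}_X$ is the complex of an honest Fano), while $\QQ$-factoriality forces the relevant GIT cone to be simplicial. Enumerating the surviving data up to the admissible operations that preserve the isomorphy type — permuting the three blocks, permuting variables within a block, rescaling coordinates, and applying automorphisms of the grading group $\Cl(X)$ — then produces precisely the twelve families $2.01$–$2.12$.

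Third, I settle the two supplementary claims. For non-isomorphism I use that two such varieties are isomorphic exactly when their defining data agree after an admissible operation, so it suffices to separate the twelve rows by invariants manifestly preserved under those operations: the isomorphy type of $\Cl(X)$, whose torsion singles out $2.03,2.04$ (with $\ZZ/3\ZZ$) and $2.12$ (with $\ZZ/2\ZZ$) from the torsion-free rows and separates the former two from $2.12$; the multiset of exponents occurring in $f$ (separating, for instance, $2.03$ from $2.04$, and $2.01$ from $2.02$); and the finer data of singularity types together with the anticanonical self-intersection $-K_X^3$ to distinguish the remaining torsion-free $\ZZ^2$ rows. For smoothness I apply the combinatorial smoothness criterion to each row: smoothness is equivalent to triviality of all local class groups along the semistable locus, and a direct check leaves only $2.01$ and $2.02$, whose relations $T_1T_2+T_3T_4+T_5T_6$ and $T_1T_2^2+T_3T_4^2+T_5T_6^2$ with the stated weights define smooth varieties.

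I expect the main obstacle to lie in the completeness and finiteness of the enumeration in the second step. One must argue carefully that combinatorial minimality together with terminality bound the exponents $l_{ij}$ and the entries of $P$ \emph{simultaneously}, so that no admissible datum escapes the finite search, and then organize the sizeable but finite case distinction — block sizes $(2,2,2),(3,2,1),(4,1,1)$, exponent patterns within each block, and the possible torsion parts of the grading — without omission or duplication. The non-isomorphism bookkeeping is a secondary difficulty, since several rows share the same class group and the same underlying trinomial shape and must be separated only by the finer singularity- and degree-invariants.
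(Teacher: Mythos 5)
Your overall strategy (encode $X$ by defining data $(A,P)$, invoke Theorem~\ref{thm:combmin-bound} to bound $\delta(X)$, use the lattice-point criterion on the anticanonical complex to finitize, then enumerate up to admissible operations) matches the paper's, but the two reductions you make at the outset are both unjustified, and they are exactly where the real work lies. First, it is not true that $\delta(X)=3$ forces $X$ to be toric: the paper's Lemma~\ref{lem:terminal-combmin}~(i) lists five genuinely non-toric candidate constellations with $\delta(X)=3$ (from $m=0$, $r=2$, $\tuplen=(3,3,1)$ up to $m=0$, $r=5$, $\tuplen=(2,2,2,2,1,1)$), and these are eliminated only by the case-by-case terminality/$\QQ$-factoriality analysis of Section~\ref{sec:Qfact-classif} and \cite[Ch.~3]{Nico}; if there were an a priori argument, Theorem~\ref{thm:combmin-bound} would have been stated as $\delta(X)\le 2$ for non-toric $X$. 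Second, combinatorial minimality together with $\delta(X)=2$ does \emph{not} pin the data down to ``exactly three blocks, one trinomial relation, no variables $S_k$'': Lemma~\ref{lem:terminal-combmin}~(ii) lists ten admissible constellations, including ones with $r=3,4$ (hence two or three relations) and with $m=1,2$ free variables, for instance $m=1$, $r=3$, $\tuplen=(2,2,1,1)$ and $m=2$, $r=2$, $\tuplen=(2,1,1)$. That every surviving variety has six generators and a single trinomial relation is a \emph{conclusion} of the classification, not a hypothesis you may impose; by assuming it you silently discard most of the search space, and the completeness of your table — the main assertion of the theorem — is precisely what then remains unproved.

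A secondary but still substantial gap: you defer the simultaneous boundedness of the exponents $l_{ij}$ and of the entries of $P$ to ``the main obstacle'', but this is the technical heart of the proof and does not follow formally from terminality being a lattice-point condition (the anticanonical complex depends on the very data one is trying to bound). The paper obtains these bounds constellation by constellation through explicit vertex computations for the lineality part $A^c_{X,0}$ and the leaves of the anticanonical complex — e.g.\ the shadow argument of Lemma~\ref{lem:P2-shadow}, width estimates on slices, and the volume bound for lattice polytopes with a unique interior lattice point via~\cite{Ka:2010} — and only then runs a finite computer check with~\cite{MDS}. Without arguments of this type, your ``finite search'' is not actually finite, and the enumeration producing rows 2.01--2.12 cannot be carried out.
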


In Section~\ref{sec:tvarcpl1} we recall the description of Fano varieties of complexity one
via certain defining matrices.
Moreover we review definition and basic properties of the anticanonical complex.
In Section~\ref{sec:combmin-rho} we focus on combinatorially minimal varieties
and derive useful properties on their defining data.
Section~\ref{sec:combmin-3folds} is devoted to the proof of Theorem~\ref{thm:combmin-bound}.
In Section~\ref{sec:Qfact-classif} we prove Theorem~\ref{thm:combmin-list}.
In Section~\ref{sec:growanticancomp} we discuss the generalization of item~(iv) of the approach,
setting the basis for a future complete classification of terminal Fano threefolds of complexity one.
The author would like to thank J\"urgen Hausen for the constant help
and numerous fruitful discussions.

\tableofcontents


\section{$T$-varieties with a torus action of complexity one}
\label{sec:tvarcpl1}

For convenience, we gather here notation and basic facts
on rational projective varieties $X$
that come with an effective torus action of complexity one;
the original references are~\cite{HaSu,HaHe,BeHaHuNi}.
The Cox rings $\Cox(X)$ of these varieties are
obtained in the following way.

\begin{construction}
\label{constr:RAPdown}
Fix $r \in \ZZ_{\ge 1}$ and a sequence 
$\tuplen=(n_0, \ldots, n_r) \in \ZZ_{\ge 1}^{r+1}$, set 
$n := n_0 + \ldots + n_r$, and fix  
integers $m \in \ZZ_{\ge 0}$ and $0 < s < n+m-r$.
A pair $(A,P)$ of \emph{defining matrices}
consists of
\begin{itemize}
\item 
a matrix $A := [a_0, \ldots, a_r]$ of size $2\times(r+1)$
with pairwise linearly independent column vectors from $\KK^2$,
\item 
an integral block matrix $P$ of size $(r + s) \times (n + m)$, 
whose columns are pairwise different primitive
vectors generating $\QQ^{r+s}$ as a cone.
\begin{eqnarray*}
P
& = & 
\left[ 
\begin{array}{cc}
L & 0 
\\
d & d'  
\end{array}
\right],
\end{eqnarray*}
where $d$ is an $(s \times n)$-matrix, $d'$ an $(s \times m)$-matrix 
and $L$ is an $(r \times n)$-matrix built from tuples 
$l_i := (l_{i1}, \ldots, l_{in_i}) \in \ZZ_{\ge 1}^{n_i}$ 
as follows
\begin{eqnarray*}
L
& = & 
\left[
\begin{array}{cccc}
-l_0 & l_1 &   \ldots & 0 
\\
\vdots & \vdots   & \ddots & \vdots
\\
-l_0 & 0 &\ldots  & l_{r} 
\end{array}
\right].
\end{eqnarray*}
\end{itemize}
We denote with $v_{ij}$ and $v_{k}$ the columns of the matrix $P$,
where $0 \le i \le r$, $1 \le j \le n_i$ and $1 \le k \le m$.
Over the same indices define the variables $T_{ij}$ and $S_k$ and
consider the polynomial ring $\KK[T_{ij},S_k]$.
For every $0 \le i \le r$, define a monomial
\begin{eqnarray*}
T_i^{l_i} 
&  := &
T_{i1}^{l_{i1}} \cdots T_{in_i}^{l_{in_i}}.
\end{eqnarray*}
Denote by $\mathfrak{I}$ the set of 
all triples $I = (i_1,i_2,i_3)$ with 
$0 \le i_1 < i_2 < i_3 \le r$ 
and define for any $I \in \mathfrak{I}$ 
a trinomial 
$$
g_I \ := \ g_{i_1,i_2,i_3} \ := \ \det
\left[
\begin{array}{ccc}
T_{i_1}^{l_{i_1}} & T_{i_2}^{l_{i_2}} & T_{i_3}^{l_{i_3}}
\\
a_{i_1} & a_{i_2} & a_{i_3}
\end{array}
\right].
$$
Let $P^*$ denote the transpose of $P$,
consider the factor group 
$K := \ZZ^{n+m}/\rm{im}(P^*)$
and the projection $Q \colon \ZZ^{n+m} \to K$.
We define a $K$-grading on $\KK[T_{ij},S_k]$ via
$$ 
\deg(T_{ij}) 
 \ := \ 
Q(e_{ij}),
\qquad
\deg(S_{k}) 
 \ := \ 
Q(e_{k}).
$$
Then the trinomials $g_I$ are $K$-homogeneous,
all of the same degree $\mu \in K$.
In particular, we obtain a $K$-graded factor ring  
\begin{eqnarray*}
R(A,P)
& := &
\KK[T_{ij},S_k; \; 0 \le i \le r, \, 1 \le j \le n_i, 1 \le k \le m] 
 / \
\bangle{g_I; \; I \in \mathfrak{I}}.
\end{eqnarray*}
\end{construction}

\begin{remark}
\label{rem:ci}
The ring $R(A,P)$ of Construction~\ref{constr:RAPdown}
is a complete intersection: with $g_i := g_{i,i+1,i+2}$ 
we have 
$$ 
\bangle{g_I; \; I \in \mathfrak{I}}
\ = \ 
\bangle{g_0,\ldots,g_{r-2}},
\qquad\quad
\dim(R(A,P)) \ = \ n+m-(r-1).
$$
We can always assume that $P$ is \emph{irredundant} 
in the sense that $l_{i1} + \ldots + l_{in_i} \ge 2$
holds for all $i = 0,\ldots, r$.
A redundant matrix $P$ allows the elimination of variables in $R(A,P)$.
\end{remark} 

\begin{remark}
\label{remark:admissibleops}
The following elementary column and row operations 
on the defining matrix $P$ are called \emph{admissible operations}.
They do not change the isomorphy type
of the $K$-graded ring $R(A,P)$;
\begin{enumerate}
\item
swap two columns inside a block
$v_{ij_1}, \ldots, v_{ij_{n_i}}$,
\item
swap two whole column blocks
$v_{ij_1}, \ldots, v_{ij_{n_i}}$
and $v_{i'j_1}, \ldots, v_{i'j_{n_{i'}}}$,
\item
add multiples of the upper $r$ rows
to one of the last $s$ rows,
\item
any elementary row operation among the last $s$
rows,
\item
swap two columns inside the $d'$ block.
\end{enumerate}
\end{remark}

\begin{remark}
\label{rem:fanoRAP}
The \emph{anticanonical class} of the $K$-graded 
ring $R(A,P)$ is 
$$ 
\kappa(A,P)
\ := \ 
\sum_{i,j} Q(e_{ij})
\ + \ 
\sum_k Q(e_k)
\ - \ 
(r-1) \mu
\ \in \ 
K
$$
and the \emph{moving cone} of $R(A,P)$ is
$$ 
\Mov(A,P) 
\ := \ 
\bigcap_{i,j} \cone(Q(e_{uv},e_t; \; (u,v) \ne (i,j))  
\ \cap \ 
\bigcap_{k} \cone(Q(e_{uv},e_{t}; \; t \ne k)  
\ \subseteq \
K_\QQ.
$$
The $K$-graded ring $R(A,P)$ is the Cox ring of 
a Fano variety if and only if $\kappa(A,P)$ lies in
the relative interior of $\Mov(A,P)$.
This Fano variety can be constructed from $R(A,P)$ as follows.
\end{remark}

\begin{construction}
\label{constr:RAPFano}
Consider the $K$-graded ring $R(A,P)$ 
of Construction~\ref{constr:RAPdown}
and assume that $\kappa(A,P)$ lies in the 
relative interior of $\Mov(A,P)$.
Then the $K$-grading on $\KK[T_{ij},S_k]$
defines an action of the quasitorus 
$H := \Spec \; \KK[K]$ on $\b{Z} := \KK^{n+m}$
leaving $\b{X} := V(g_I; I\in\mathfrak{I}) \subseteq \b{Z}$ 
invariant. Consider 
$$ 
\rq{Z}_c 
\ := \ 
\{z \in \b{Z}; \; 
f(z) \ne 0 \text{ for some } 
f \in \KK[T_{ij},S_k]_{ \nu \kappa(A,P)}, \, 
\nu \in \ZZ_{>0} \}
\ \subseteq \ 
\b{Z},
$$
the set of $H$-semistable points with 
respect to the weight $\kappa(A,P)$.
Then $\rq{X} := \b{X} \cap \rq{Z}_c$ is 
an open $H$-invariant set in $\b{X}$ 
and we have a commutative diagram
$$ 
\xymatrix{
{\rq{X}}
\ar[r]
\ar[d]^{\quot H}_{\pi}
&
{\rq{Z}_c}
\ar[d]^{\quot H}_{\pi}
\\
X(A,P)
\ar[r]
&
Z_c}
$$
where $X(A,P)$ is a Fano variety with torus action of complexity one,
$Z_c := \rq{Z}_c \quot H$ is a complete toric variety,
the downward maps $\pi$ are characteristic spaces and
the lower horizontal arrow is a closed embedding. 

Moreover we define $Z$ as the minimal toric subvariety of $Z_c$
containing $X$ as a closed subvariety.
The fan $\Sigma$ of $Z$ lives in $\QQ^{r+s}$.
The primitive generators of the rays of $\Sigma$
are precisely the columns of the matrix $P$.

We have
$$ 
\dim(X(A,P)) = s+1,
\qquad
\Cl(X(A,P)) \ \cong \ K,
$$
$$
-\mathcal{K}_X \ = \ \kappa(A,P),
\qquad
\mathcal{R}(X) \ \cong \ R(A,P).
$$
By the results of~\cite{HaSu,HaHe},
every normal rational Fano variety with an effective torus action
of complexity one arises from this Construction.
\end{construction}


Next, we present the anticanonical complex of a rational Fano variety
$X$ with torus action of complexity one.
This construction was first introduced in~\cite{BeHaHuNi}
in a much more general setting, i.e. that of any
normal Fano variety with a complete intersection Cox ring.

\begin{construction}
Consider the $K$-graded ring $R(A,P)$ of Construction~\ref{constr:RAPdown}
and the corresponding Fano variety $X=X(A,P)$ from Construction~\ref{constr:RAPFano}.
Let $\gamma \subseteq \QQ^{n+m}$ be the positive orthant,
spanned by the canonical basis vectors $e_{ij},e_k \in \ZZ^{n+m}$.
Denote with $B(g_i)$ the Newton polytopes of the relations $g_i$
and let $B := B(g_0) + \ldots + B(g_{r-2})$ be their Minkowski sum.

The \emph{anticanonical polyhedron} of 
$X$ is the dual polyhedron $A_X \subseteq \QQ^{r+s}$ 
of the polytope
$$
B_X \ := \ 
(P^*)^{-1}((Q^{-1}(-\mathcal{K}_X)\cap\gamma) + B - (1,\ldots,1)) 
\ \subseteq \ \QQ^{r+s}.
$$
The \emph{anticanonical complex} of $X$ 
is the coarsest common refinement of polyhedral 
complexes
$$ 
A^c_X \ := \ 
{\rm faces}(A_X) \sqcap \Sigma \sqcap \trop(X).
$$
The \emph{relative interior} of $A_X^c$ 
is the interior of its support 
with respect to the tropical variety $\trop(X)$.
\end{construction}

\begin{theorem}[{\cite[Thm.~1.4]{BeHaHuNi}}]
\label{thm:BHHNmain}
Let $X=X(A,P)$ be Fano.
\begin{enumerate}
\item
$A_X^c$ contains the origin in its relative
interior and all primitive generators of 
the fan $\Sigma$ are vertices of $A_X^c$.
\item 
$X$ has at most log terminal singularities 
if and only if the anticanonical complex 
$A_X^c$ is bounded.
\item
$X$ has at most terminal singularities
if and only if $0$ and the primitive generators 
$v_\varrho$ for $\varrho \in \Sigma^{(1)}$
are the only lattice points of $A_X^c$.
\end{enumerate}
\end{theorem}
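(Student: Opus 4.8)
The plan is to compute the discrepancies of $X=X(A,P)$ combinatorially and to read off each of (i)--(iii) from the resulting formula. The starting point is the closed embedding $X \subseteq Z$ into the ambient toric variety $Z$ with fan $\Sigma$ from Construction~\ref{constr:RAPFano}: every proper birational modification of $X$ that we need arises as the proper transform under a toric morphism $Z' \to Z$ coming from a refinement $\Sigma'$ of $\Sigma$. For such a morphism the exceptional prime divisors over $X$ correspond to new rays $\varrho' \in \Sigma'^{(1)}$, but only those whose primitive generator lies on the tropical variety $\trop(X)$ actually meet $X$ and contribute a divisor $E_{\varrho'}$ over $X$; this is precisely why the refinement is taken inside $\trop(X)$ and why $A_X^c$ is defined as a refinement involving $\trop(X)$. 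Thus the whole problem reduces to computing the log discrepancy $a(E_{\varrho'},X)+1$ as a function of the primitive vector $v=v_{\varrho'} \in \trop(X)\cap|\Sigma|$.

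The key step is the discrepancy formula. First I would express $-\mathcal{K}_X$ through the $K$-grading and pull it back along the quotient presentation $\pi$; on the toric side the anticanonical class of $Z'$ is given by the piecewise linear support function taking the value $1$ on every primitive ray generator, so the toric contribution to $a(E_{\varrho'},X)+1$ is the value of this support function at $v$. The genuinely complexity-one correction comes from adjunction for the complete intersection $X=V(g_0,\ldots,g_{r-2})\subseteq Z$ of Remark~\ref{rem:ci}: the vanishing and ramification of the relations along the strata of $Z$ over $\trop(X)$ contribute exactly the shift encoded by the Minkowski sum $B=B(g_0)+\ldots+B(g_{r-2})$ of the Newton polytopes together with the normalisation $-(1,\ldots,1)$, which is what enters the definition of $B_X$. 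Assembling these two contributions shows that the log discrepancy $a(E_{\varrho'},X)+1$ at $v$ equals the value at $v$ of the gauge function of $A_X^c$, i.e. the unique scalar $t>0$ with $v/t\in\partial A_X^c$. In particular $a(E_{\varrho'},X)=0$ exactly when $v$ lies on $\partial A_X^c$, while $a<0$ for $v$ strictly between $0$ and the boundary and $a>0$ for $v$ lying strictly beyond $A_X^c$ along its ray in $\trop(X)$.

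Granting this formula, the three assertions follow. For (i), the Fano hypothesis $\kappa(A,P)\in\mathrm{relint}\,\Mov(A,P)$ of Remark~\ref{rem:fanoRAP} guarantees that $-\mathcal{K}_X$ is ample, so the dual polyhedron $A_X$ is a full-dimensional neighbourhood of $0$ and $0$ lies in the relative interior of $A_X^c$; the primitive generators $v_\varrho$ are non-exceptional divisors of $X$, hence have log discrepancy $1$ and lie on $\partial A_X^c$, and since $A_X^c$ refines $\mathrm{faces}(A_X)$ with the fan $\Sigma$, each ray of $\Sigma$ contributes its primitive generator as a $0$-dimensional cell, i.e. a vertex. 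For (ii), $X$ is log terminal iff $a(E,X)>-1$, i.e. log discrepancy $>0$, for every divisor $E$ over $X$; by the formula this fails exactly when some primitive $w\in\trop(X)\cap|\Sigma|$ has gauge $0$, which forces a whole ray $\QQ_{\ge0}w$ into $A_X^c$ and produces lattice points of log discrepancy $0$, and such a ray exists precisely when $A_X^c$ is unbounded. For (iii), $X$ is terminal iff $a(E,X)>0$ for every exceptional $E$, i.e. iff every lattice point of $\trop(X)\cap|\Sigma|$ other than $0$ and the $v_\varrho$ has strictly positive log discrepancy and so lies strictly outside $A_X^c$; equivalently, $0$ and the $v_\varrho$ are the only lattice points of $A_X^c$.

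The main obstacle is the discrepancy formula of the second paragraph, specifically the adjunction correction for the complete intersection $X\subseteq Z$: one must show that refining $\Sigma$ inside $\trop(X)$ really resolves $X$ and not merely $Z$, control the proper transforms of the relations $g_i$ along the toric strata lying over $\trop(X)$, and verify that their vanishing and ramification orders are captured exactly by the Newton polytopes $B(g_i)$ and the shift $-(1,\ldots,1)$. Establishing this in the complexity-one setting, where the fibres of $X$ over the quotient curve degenerate over the $r+1$ points encoded by the columns of $A$, is the technical heart; it is carried out in the general complete-intersection framework of~\cite{BeHaHuNi}.
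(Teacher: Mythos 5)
The paper does not prove this statement itself---it imports it verbatim from \cite{BeHaHuNi}, Thm.~1.4---and your outline faithfully reconstructs the strategy of that cited proof: reduce to divisors extracted by refinements of $\Sigma$ supported on $\trop(X)$ (only rays on the tropical variety meet $X$), establish that the log discrepancy at a primitive vector $v$ equals the gauge function of $A_X^c$ via the toric support function for $-\mathcal{K}_{Z'}$ plus the Newton-polytope adjunction correction built into $B_X$, and then read off (i)--(iii) from the sign of the gauge. Your account is correct, including your accurate identification of the technical heart (that weakly tropical ambient refinements resolve $X$ itself and that the ramification of the relations $g_i$ is captured exactly by $B$ and the shift $-(1,\ldots,1)$), which both you and the paper legitimately defer to \cite{BeHaHuNi}.
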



Now we turn to the machinery 
developed in~\cite{BeHa,Ha2,ArDeHaLa}. 
Let us briefly summarize the necessary 
notions and statements in a series of remarks 
adapted to our needs.

\begin{remark}
\label{const:rlvu}
Fix defining matrices $(A,P)$ 
and let $\gamma \subseteq \QQ^{n+m}$ 
be the positive orthant,
spanned by the canonical basis vectors 
$e_{ij},e_k \in \ZZ^{n+m}$.
Every face $\gamma_0 \preceq \gamma$ defines a toric orbit 
in $\overline{Z} = \KK^{n+m}$:
$$ 
\overline{Z}(\gamma_0)
\ := \ 
\{
z \in \overline{Z}; \; 
z_{ij} \neq 0 \Leftrightarrow e_{ij} \in \gamma_0  
\text{ and } 
z_{k} \neq 0 \Leftrightarrow e_{k} \in\gamma_0 
\}
\ \subseteq \
\overline{Z}
$$
We say that $\gamma_0 \preceq \gamma$ is an 
\emph{$\mathfrak{F}$-face} for $(A,P)$
if the associated toric orbit meets
the total coordinate space 
$\overline{X} = V(g_I; \; I \in \mathfrak{I}) \subseteq \overline{Z}$,
that means if we have
$$
\overline{X}(\gamma_0) 
\ := \
\overline{X} \cap \overline{Z}(\gamma_0)
\ \ne \
\emptyset.
$$  
In particular, $\overline{X}$ is the disjoint union 
of the locally closed pieces $\overline{X}(\gamma_0)$
associated to the $\mathfrak{F}$-faces.
\end{remark}

\begin{remark}
\label{rem:rlv2}
For the toric variety $Z_c$ and $X=X(A,P)$ of  
Construction~\ref{constr:RAPFano}, we define
the collections of \emph{relevant faces} and the \emph{covering collection}:
\begin{eqnarray*}
\rlv(Z_c) & := & 
\{ \gamma_0 \preceq \gamma; \; u \in Q(\gamma_0)^\circ\},
\\
\rlv(X) & := & 
\{ \gamma_0 \in \rlv(Z_c); \;
\gamma_0 \text{ is an } \mathfrak{F}\text{-face} \},
\\
\cov(X) & := &
\{ \gamma_0\in\rlv(X); \;
\gamma_0 \text{ minimal} \}.
\end{eqnarray*}
Let $\gamma_0^* := \gamma_0^\perp \cap \gamma\preceq \gamma$ 
denote the complementary face of $\gamma_0 \preceq \gamma$.
Then there is a bijection between $\rlv(Z_c)$ and the fan 
$\Sigma_c$ of the toric variety $Z_c$:
$$
\rlv(Z_c) \ \to \ \Sigma_c,
\qquad\qquad
\gamma_0 \ \mapsto \ P(\gamma_0^*).
$$  
The toric orbits of $Z_c$ correspond to the 
cones of the fan $\Sigma_c$ and thus to the 
cones of~$\rlv(Z_c)$. 
Concretely, the toric orbit of $Z_c$ associated 
with $\gamma_0\in \rlv(Z_c)$  is 
$$ 
Z_c(\gamma_0)
\ : = \
\pi(\overline{Z}(\gamma_0)).
$$ 
The relevant faces $\rlv(X)$ of $X$  
define exactly the toric orbits of 
$Z_c$ that intersect $X \subseteq Z_c$
non-trivially
and thus give a locally closed 
decomposition
$$
X 
\ = \ 
\bigcup_{\gamma_0\in\rlv(X)} X(\gamma_0),
\qquad\qquad
X(\gamma_0) 
\ := \ 
X \cap Z_c(\gamma_0) 
\ = \ 
\pi((\overline{X}(\gamma_0)).
$$
The fan $\Sigma$, defining the minimal toric open subset 
$Z \subseteq Z_c$ that contains $X$, is generated by the cones
$\sigma=P(\gamma_0^*)$, where $\gamma_0\in\rlv(X)$.
\end{remark}

\begin{remark}
\label{rem:divcones}
Let $X=X(A,P)$ arise from Construction~\ref{constr:RAPFano}.
Then the cones of effective, movable, semiample
and ample divisor classes are given as
$$ 
\Eff(X) 
\ = \ 
Q(\gamma),
\qquad
\Mov(X) 
\ = \ 
\Mov(A,P)
\ = 
\bigcap_{\gamma_0 \text{ facet of }\gamma} Q(\gamma_0),
$$
$$
\SAmple(X)
\ =
\bigcap_{\gamma_0\in\rlv(X)} Q(\gamma_0),
\qquad
\Ample(X)
\ =
\bigcap_{\gamma_0\in\rlv(X)} Q(\gamma_0)^\circ.
$$
In particular, the GIT-fan of the $H$-action 
on $\overline{X}$ induces the Mori chamber decomposition,
meaning that it subdivides $\Mov(X)$ into 
the nef cones of the small birational relatives of~$X$. 
\end{remark}

\begin{remark}
\label{rem:Qfact}
Let $X=X(A,P)$ arise from Construction~\ref{constr:RAPFano}.
For any $\gamma_0\in\rlv(X)$ and $x\in X(\gamma_0)$,
the following statements hold:
\begin{enumerate}
\item
$x$ is $\QQ$-factorial if and only 
if $Q(\gamma_0)$ is full-dimensional,
\item
$x$ is factorial if and only if 
$Q$ maps $\lin(\gamma_0)\cap\ZZ^{n+m}$ onto $\Cl(X)$,
\item
$x$ is smooth if and only if $x$ is factorial 
and all $z \in \pi^{-1}(x)$ are
smooth in~$\overline{X}$.
\end{enumerate}
\end{remark}

As shown in~\cite[Corollary~1.5.16]{Nico},
we can compute the dimension of a stratum
$X(\gamma_0)$ in the following way:

\begin{proposition}
\label{prop:dimstrata}
Let $X=X(A,P)$ be $\QQ$-factorial.
Then for any $\gamma_0\in\rlv(X)$ we have
$$
\dim X(\gamma_0) \ = \ 
\max\{ k\in\ZZ_{\ge0}; \, \exists \textrm{ chain } \gamma_k \prec \ldots \prec \gamma_0 \textrm{ in } \rlv(X) \}.
$$
\end{proposition}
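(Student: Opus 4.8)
The plan is to interpret both sides of the asserted equality geometrically via the toric correspondence of Remark~\ref{rem:rlv2}. Recall that each $\gamma_0\in\rlv(X)$ corresponds to the cone $\sigma_0 := P(\gamma_0^*)$ of the fan $\Sigma$, and the stratum $X(\gamma_0) = X\cap Z_c(\gamma_0)$ is the intersection of $X$ with the toric orbit attached to $\sigma_0$. First I would establish that the face relation is order-reversing on the complementary faces: if $\gamma_k\prec\gamma_0$ then $\gamma_0^*\prec\gamma_k^*$, hence $P(\gamma_0^*)\prec P(\gamma_k^*)$, so a chain $\gamma_k\prec\ldots\prec\gamma_0$ in $\rlv(X)$ corresponds to a chain of cones $\sigma_0\prec\ldots\prec\sigma_k$ in $\Sigma$. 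Since these cones are the cones of the fan defining the toric strata, the maximal length of such a chain of faces above $\sigma_0$ equals $\dim(\sigma_k)-\dim(\sigma_0)$ for a maximal cone $\sigma_k$, which is exactly the dimension of the toric orbit $Z_c(\gamma_0)$. Thus the right-hand side computes $\dim Z_c(\gamma_0)$, and the task reduces to showing $\dim X(\gamma_0) = \dim Z_c(\gamma_0)$.

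The key step is therefore to prove that each nonempty stratum $X(\gamma_0)$ is \emph{dense} in its ambient toric orbit $Z_c(\gamma_0)$, equivalently that $\dim X(\gamma_0)=\dim Z_c(\gamma_0)$. Here I would use the $\QQ$-factoriality hypothesis crucially. By Remark~\ref{rem:Qfact}(i), $\QQ$-factoriality of the points in $X(\gamma_0)$ means $Q(\gamma_0)$ is full-dimensional in $K_\QQ$; passing to the relevant simplicial structure this forces the cones $\sigma_0=P(\gamma_0^*)$ to be simplicial, so the relations $g_I$ cut down $\overline{X}(\gamma_0)$ inside $\overline{Z}(\gamma_0)$ by the expected codimension $r-1$. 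Concretely, on the torus orbit $\overline{Z}(\gamma_0)$ the relations $g_I$ become, after clearing the nonvanishing coordinates, a system whose vanishing imposes exactly $r-1$ independent conditions on the free coordinates indexed by $\gamma_0$, as in the complete intersection count of Remark~\ref{rem:ci}. Quotienting by $H$ and using that $\pi$ is a geometric quotient on the semistable locus then yields $\dim X(\gamma_0)=\dim\overline{X}(\gamma_0)-\dim H = \dim Z_c(\gamma_0)$.

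The main obstacle will be the dimension count for $\overline{X}(\gamma_0)$: one must verify that the trinomials $g_I$ restrict to a genuine complete intersection on each relevant toric orbit, i.e.\ that they do not degenerate to fewer than $r-1$ independent equations on $\overline{Z}(\gamma_0)$. This is where the $\mathfrak{F}$-face condition and $\QQ$-factoriality interact: being an $\mathfrak{F}$-face guarantees $\overline{X}(\gamma_0)\neq\emptyset$, while the simpliciality from $\QQ$-factoriality controls which of the monomials $T_i^{l_i}$ survive on the orbit and prevents the determinantal relations from collapsing. I would handle this by analyzing, for the given $\gamma_0$, which index blocks $i$ have all their $T$-coordinates vanishing versus those with a surviving nonzero coordinate, and checking that the induced relations among the $g_I$ stay independent in the surviving variables. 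Once the equality $\dim X(\gamma_0)=\dim Z_c(\gamma_0)$ is secured, combining it with the chain interpretation of $\dim Z_c(\gamma_0)$ from the first paragraph completes the proof, and I would finish by citing \cite[Corollary~1.5.16]{Nico} for the detailed verification of the dimension count.
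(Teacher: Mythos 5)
Your proposal breaks down at its central reduction, and the failure is structural rather than a matter of missing detail. First, the right-hand side of the proposition does \emph{not} compute $\dim Z_c(\gamma_0)$: the chains are required to lie in $\rlv(X)$, which is thinned out relative to the full face poset of $\gamma$ (equivalently, of $\Sigma_c$) by the $\mathfrak{F}$-face condition, and this genuinely shortens maximal chains. Second, the equality $\dim X(\gamma_0)=\dim Z_c(\gamma_0)$ that you then try to prove is false for every non-toric $X(A,P)$: such $X$ has $r\ge 2$ and lies in $Z_c$ with codimension $r-1$, so already the big stratum $X(\gamma)$ has dimension $s+1=\dim X$, while $Z_c(\gamma)$ is the big torus, of dimension $r+s$. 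Your own complete-intersection count exposes the inconsistency: if the relations cut $\overline{X}(\gamma_0)$ out of $\overline{Z}(\gamma_0)$ in codimension $r-1$, then $\dim X(\gamma_0)=\dim\overline{X}(\gamma_0)-\dim H=\dim Z_c(\gamma_0)-(r-1)$, not $\dim Z_c(\gamma_0)$. Both failures are visible in No.~2.01 of Theorem~\ref{thm:combmin-list}, where $r=s=2$, $g=T_1T_2+T_3T_4+T_5T_6$, the weights are $(1,0)$ and $(0,1)$ alternately and the ample class is $(2,2)$: for $\gamma_0=\gamma$ one has $\dim Z_c(\gamma)=4$ but $\dim X(\gamma)=3$, and the maximal chains in $\rlv(X)$ below $\gamma$ have length exactly $3$, e.g.\ $\cone(e_1,e_4)\prec\cone(e_1,e_2,e_3,e_4)\prec\cone(e_1,\ldots,e_5)\prec\gamma$. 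No chain of length $4$ exists, because a relevant face of dimension $3$ must take exactly one generator from each of the blocks $\{e_1,e_2\},\{e_3,e_4\},\{e_5,e_6\}$, and then any face of dimension $4$ above it has exactly one surviving monomial, hence is not an $\mathfrak{F}$-face.

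What your argument misses is that the codimension of $\overline{X}(\gamma_0)$ in $\overline{Z}(\gamma_0)$ is not constant over $\rlv(X)$: it is $0$ on relevant faces where no monomial $T_i^{l_i}$ survives (there all relations $g_I$ vanish identically on the orbit) and $r-1$ on faces where at least $r$ of the $r+1$ monomials survive; the $\mathfrak{F}$-face condition excludes everything in between, and it is precisely this dichotomy that makes the maximal chain length in $\rlv(X)$ track the stratum dimension. A correct proof must work through this case distinction (or argue inductively on closures of strata inside $\rlv(X)$) rather than through density of strata in toric orbits. Note also that the paper itself gives no internal proof: it quotes the statement from Corollary~1.5.16 of the author's thesis, which is exactly the reference your final sentence defers to, so as a self-contained argument your proposal is circular at that point as well.
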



In the following we give an explicit description of the vertices of
the anticanonical complex of a Fano variety of complexity one.
The original source for the $\QQ$-factorial case is~\cite[Section~4]{BeHaHuNi},
whereas in~\cite[Section~3]{ABHW} Arzhantsev, Braun, Hausen and Wrobel
generalized the description to the non-$\QQ$-factorial case.

\begin{construction}
In the situation of Construction~\ref{constr:RAPFano}, define
the linear space $\lambda := \{0\} \times \QQ^s \subseteq \QQ^{r+s}$, 
the canonical basis vectors $e_1,\ldots, e_r$ and
$e_0 := -e_1- \ldots -e_r$.
Then the tropical variety of $X$ is given by
$$ 
\trop(X) 
\ = \ 
\lambda_0 \cup \ldots \cup \lambda_r
\ \subseteq \ 
\QQ^{r+s},
\qquad \text{where} \quad
\lambda_i \ := \ \cone(e_i) + \lambda.
$$
A cone $\tau \in \Sigma$ is called
\begin{itemize}
\item
\emph{big}\index{cone!big},
if $\tau \cap \relint(\lambda_i) \ne \emptyset$ 
holds for each $i = 0, \ldots, r$;
\item
\emph{elementary big}\index{cone!elementary big}
if it is big, has no rays inside $\lambda$
and precisely one inside each $\lambda_i$;
\item
a \emph{leaf cone}\index{cone!leaf}
if $\tau \subseteq \lambda_i$ holds for some $i$. 
\end{itemize}
Moreover we call \emph{$P$-elementary cone}
any cone $\sigma$ of the form
$$
\sigma \ = \ \cone(v_{0j_0},\ldots,v_{rj_r}) \subset \QQ^{r+s},
$$
which means that $\sigma$ is defined by precisely one $v_{ij}$ for every $i=0,\ldots,r$.
Note that a $P$-elementary cone does not need to be a cone of $\Sigma$.
In the $\QQ$-factorial case, elementary cones and $P$-elemntary cones coincide
but this is not true in the general case.

Let $\sigma \in \Sigma$ be a $P$-elementary cone.
We assign the following positive integers
to the rays $\varrho = \cone(v_{ij}) \in \sigma^{(1)}$
of $\sigma$ and to $\sigma$ itself:
$$
l_\varrho \ := \ l_{ij},
\qquad
\ell_{\sigma,\varrho} 
\ := \ 
l_{{\varrho}}^{-1} \prod_{\varrho' \in \sigma^{(1)}}{l_{\varrho'}},
\qquad 
\ell_{\sigma} \ 
:= \ 
\sum_{\varrho \in \sigma^{(1)}}{\ell_{\sigma,\varrho}} 
- 
(r-1)\prod_{\varrho \in \sigma^{(1)}}{l_{\varrho}}.
$$
Moreover, in $\QQ^{r+s}$, we define vectors:
$$
v_{\sigma} 
\ := \ 
\sum_{\varrho \in \sigma^{(1)}}{\ell_{\sigma,\varrho} v_{\varrho}},
\qquad\qquad
v'_{\sigma}
\ := \ 
\ell_{\sigma}^{-1}v_{\sigma}.
$$
Let $A_X^c$ be the anticanonical complex of $X = X(A,P)$.
The \emph{lineality part} of $A_X^c$ is the polyhedral 
complex $A_{X,0}^c := A_X^c \sqcap \lambda$.
The \emph{$i$-th leaf} of $A_X^c$ is the polyhedral 
complex $A_X^c \sqcap \lambda_i$.
According to~\cite[Proposition~3.8]{ABHW},
if $X$ is log terminal and $\QQ$-factorial, then we have
\begin{eqnarray*}
\vert A_X^c \sqcap \lambda \vert
& = &
\conv(v_{\varrho}, v_{\sigma}'; \ 
\varrho \in \Sigma \text{ with } \varrho \subseteq \lambda,
\ \sigma \text { is $P$-elementary}),
\\
\vert A_X^c \sqcap \lambda_i \vert
& = &
\conv(v_{\varrho}, v_{\sigma}'; \ 
\varrho \in \Sigma \text{ with } \varrho \subseteq \lambda_i,
\ \sigma \text { is $P$-elementary}).
\end{eqnarray*} 
\end{construction}

\begin{remark}[{\cite[Theorem~3.13]{ABHW}}]
\label{rem:logterm2lbound}
Let $X = X(A,P)$ be a Fano variety
and consider a $P$-elementary cone
$\sigma = \varrho_0 + \ldots + \varrho_r \in \Sigma$
defining a log terminal singularity. 
Assume $l_{\varrho_0} \ge  \ldots \ge l_{\varrho_r}$.  
Then $l_{\varrho_3} =  \ldots = l_{\varrho_r} = 1$
holds and $(l_{\varrho_0},l_{\varrho_1},l_{\varrho_2})$ 
is a \emph{platonic triple}, i.e.~one of
$$ 
(l_{\varrho_0},l_{\varrho_1},1),
\qquad
(l_{\varrho_0},2,2),
\qquad
(3,3,2),
\qquad
(4,3,2),
\qquad
(5,3,2).
$$
\end{remark}

\begin{proposition}
\label{prop:triple1xy}
Let $X=X(A,P)$ be a Fano variety and consider a $P$-elementary cone
$\sigma = \varrho_0 + \ldots + \varrho_r \in \Sigma$
defining a terminal singularity.
Then at most two $l_{\varrho_i}$ differ from one.
\end{proposition}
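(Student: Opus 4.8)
The plan is to leverage Remark~\ref{rem:logterm2lbound}, which already classifies the possible multiplicity tuples $(l_{\varrho_0},l_{\varrho_1},l_{\varrho_2})$ for a log terminal $P$-elementary cone as platonic triples, and then to exclude all of these except the case $(l_{\varrho_0},l_{\varrho_1},1)$ using the stronger terminality hypothesis. Since terminal singularities are in particular log terminal, Remark~\ref{rem:logterm2lbound} applies, giving $l_{\varrho_3}=\cdots=l_{\varrho_r}=1$ and reducing the problem to showing that, among the five platonic triples, only those with a third entry equal to one (i.e.\ at most two of $l_{\varrho_0},l_{\varrho_1},l_{\varrho_2}$ exceed one) are compatible with terminality. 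Thus the genuinely new content is to rule out the four triples $(l_{\varrho_0},2,2)$, $(3,3,2)$, $(4,3,2)$, $(5,3,2)$, in each of which all three entries are at least two.

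First I would invoke Theorem~\ref{thm:BHHNmain}(iii): terminality is equivalent to the assertion that $0$ and the primitive ray generators are the only lattice points of the anticanonical complex $A_X^c$. The strategy is therefore to exhibit, for each forbidden triple, a lattice point of $A_X^c$ distinct from the origin and from the $v_\varrho$. Since the singularity sits in the leaf structure, I would work inside a single leaf $\lambda_i$ or the lineality part $\lambda$ and use the explicit description of $|A_X^c\sqcap\lambda_i|$ as the convex hull of the $v_\varrho$ and the auxiliary points $v'_\sigma=\ell_\sigma^{-1}v_\sigma$. The key quantity is $\ell_\sigma=\sum_\varrho \ell_{\sigma,\varrho}-(r-1)\prod_\varrho l_{\varrho}$, and for the terminal (hence $\QQ$-factorial-local) analysis the relevant sub-sum over the three large multiplicities governs whether $v'_\sigma$, or a nearby lattice combination of the $v_\varrho$, lands strictly inside the complex.

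The heart of the argument is the following numerical observation. For a platonic triple $(p,q,r_0)$ with all entries at least two, the normalized barycenter-type point $v'_\sigma$ is a convex combination of the three primitive generators with a common denominator dividing $\ell_\sigma$, and one checks that $\ell_\sigma>0$ in all four forbidden cases while the corresponding rational combination
$$
\frac{1}{\ell_\sigma}\bigl(\ell_{\sigma,\varrho_0}v_{\varrho_0}+\ell_{\sigma,\varrho_1}v_{\varrho_1}+\ell_{\sigma,\varrho_2}v_{\varrho_2}\bigr)
$$
produces an interior lattice point different from the vertices. For instance, when $(p,q,r_0)=(2,2,2)$ the point lies at the centroid-like position forcing an extra lattice point; the same phenomenon, traced through the formula for $\ell_\sigma$, rules out $(3,3,2)$, $(4,3,2)$ and $(5,3,2)$. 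By contrast, when the third entry is $1$ the product $\prod l_\varrho$ collapses and no such interior lattice point is forced, which is exactly why $(l_{\varrho_0},l_{\varrho_1},1)$ survives. Concretely I would tabulate $\ell_\sigma$ and the half-open lattice-point count for each of the five triples and observe that precisely the four with all entries $\ge 2$ violate the ``only $0$ and $v_\varrho$'' criterion.

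The main obstacle I anticipate is making the lattice-point production fully rigorous rather than merely suggestive: the points $v'_\sigma$ are rational, so one must argue that an \emph{integral} point other than the origin and the generators actually appears in $A_X^c$, which requires controlling the lattice $\ZZ^{r+s}$ relative to the sublattice spanned by the $v_\varrho$ and the contributions of the trailing multiplicity-one generators. The cleanest route is probably to reduce, via the admissible operations of Remark~\ref{remark:admissibleops} and the leaf decomposition, to a purely two- or three-dimensional lattice-simplex computation in which terminality of a simplicial cone with weights $(p,q,r_0)$ is equivalent to an elementary arithmetic condition; then the four platonic triples with all entries $\ge 2$ fail that condition by direct inspection, completing the proof.
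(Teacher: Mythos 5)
Your overall strategy is the same as the paper's: reduce via Remark~\ref{rem:logterm2lbound} to the platonic triples with trailing ones, then use Theorem~\ref{thm:BHHNmain} to produce a lattice point of $A_X^c$ other than $0$ and the ray generators whenever three of the $l_{\varrho_i}$ are at least two. For the triples $(3,3,2)$, $(4,3,2)$, $(5,3,2)$ your argument is correct and coincides with the paper's: there $\ell_\sigma=3,2,1$ respectively, every coefficient $\ell_{\sigma,\varrho}$ is divisible by $\ell_\sigma$, and hence $v'_\sigma$ itself is an integral point, contradicting terminality.

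The genuine gap is the family $(l_{\varrho_0},2,2)$. Here $\ell_\sigma=4$, $\ell_{\sigma,\varrho_0}=4$ and $\ell_{\sigma,\varrho_1}=\ell_{\sigma,\varrho_2}=2l_{\varrho_0}$, so $v'_\sigma$ is integral only when $l_{\varrho_0}$ is even. For odd $l_{\varrho_0}\ge 3$ the last $s$ coordinates of $v'_\sigma$ are $d_{0t}+\tfrac{l_{\varrho_0}}{2}(d_{1t}+d_{2t})+l_{\varrho_0}\sum_{k\ge3}d_{kt}$, which are in general genuine half-integers; so your assertion that ``the same phenomenon'' rules out all four forbidden cases is false in this subcase. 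Your proposed fallback --- tabulating an elementary arithmetic condition for each triple and checking it by direct inspection --- cannot repair this: $(l_{\varrho_0},2,2)$ is an \emph{infinite} family, and whether $v'_\sigma$ is integral depends not only on the triple but on the $d$-block of $P$, so no finite table over triples settles it. The paper closes exactly this case by a different device: for odd $l_{\varrho_0}$ it takes the point $\tfrac{l_{\varrho_0}-1}{l_{\varrho_0}}v'_\sigma+\tfrac{1}{l_{\varrho_0}}v_{\varrho_0}$ on the edge of $A_X^c$ joining $v_{\varrho_0}$ to $v'_\sigma$; its first $r$ coordinates equal $-1$ and its last $s$ coordinates are $d_{0t}+\tfrac{l_{\varrho_0}-1}{2}(d_{1t}+d_{2t})+(l_{\varrho_0}-1)\sum_{k\ge3}d_{kt}$, which are integral precisely because $l_{\varrho_0}$ is odd --- a lattice point distinct from $0$ and the generators, for \emph{every} choice of $d$. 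Without this (or an equivalent) argument, your proof does not establish the proposition.
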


\begin{proof}
Assume $l_{\varrho_0}\ge \ldots\ge l_{\varrho_r}$.
By Remark~\ref{rem:logterm2lbound} the 
triple $(l_{\varrho_0},l_{\varrho_1},l_{\varrho_2})$ is
platonic and $l_{\varrho_3}= \ldots = l_{\varrho_r}=1$ holds.
The denominator $\ell_\sigma$ of the entries of $v_{\sigma}'$
is determined by the platonic triple. In particular for
$$
(3,3,2), \qquad
(4,3,2), \qquad
(5,3,2)
$$
we obtain $\ell_{\sigma}=3,2,1$ respectively.
Moreover, the numerator is a sum in which every addend is a
multiple of all $l_{\varrho_i}$ except one.
Hence the vertex $v_{\sigma}'$ turns out to be a lattice point,
contradicting terminality by Theorem~\ref{thm:BHHNmain}.
The same happens for $(l_{\varrho_0},2,2)$ if $l_{\varrho_0}$
is even, so assume that $l_{\varrho_0}\ge 3$ is odd.
We show that this case has an integral point
on $A_{X}^c$ and therefore does not define a terminal singularity.
The primitive lattice points $v_{\varrho_i}$ have coordinates
$$
v_{\varrho_0} = (-l_{\varrho_0},\ldots,-l_{\varrho_0},d_{01},\ldots,d_{0s})^T,
\qquad
v_{\varrho_i} = (0,\ldots,0,l_{\varrho_i},0,\ldots,0,d_{i1},\ldots,d_{is})^T
$$
for all $i=1,\ldots,r$, where $l_{\varrho_i}$ occupies the $i$-th entry of $v_{\varrho_i}$.
Let $u:=v_\sigma'$ be the vertex of $A_{X,0}^c$ defined by $\sigma$.
It has coordinates $u_j=0$ for $j=1,\dots,r$ and
$$
u_{r+t} = d_{0t} + \frac{l_{\varrho_0}}{2}d_{1t} + \frac{l_{\varrho_0}}{2}d_{2t}
+ l_{\varrho_0}\sum_{k=3}^r d_{kt}
\qquad\forall\, t=1,\ldots,s \,.
$$
Using the fact that $l_{\varrho_0}$ is odd,
we see that on the edge connecting $v_{\varrho_0}$ with $u$
lies at least one lattice point, namely
$$
\frac{l_{\varrho_0}-1}{l_{\varrho_0}}u + \frac{1}{l_{\varrho_0}}v_{\varrho_0} = 
(-1,\ldots,-1, q_1,\dots,q_s)^T, \quad\text{with}
$$
$$
q_t = d_{0t} + \frac{l_{\varrho_0}-1}{2}d_{1t} + \frac{l_{\varrho_0}-1}{2}d_{2t}
+ (l_{\varrho_0}-1)\sum_{k=3}^r d_{kt}
\qquad\forall\, t=1,\ldots,s \,.
$$
Hence the platonic triple $(l_{\varrho_0},l_{\varrho_1},l_{\varrho_2})$
is of the first type, i.e. $l_{\varrho_2}=1$.
\end{proof}

lastly we study the effect of terminality on the strata of threefolds,
more precisely on the corresponding subsets of weights.

\begin{remark}
\label{rem:posstrata}
According to Remark~\ref{rem:Qfact}~(ii),
the stratum $X(\gamma_0)\subset X$ consists
of factorial points of $X=X(A,P)$ if and only if
$Q(\lin(\gamma_0)\cap\ZZ^{n+m})=\Cl(X)$ holds.
In dimension three, terminal singularities occur as isolated points,
see e.g.~\cite[Cor.~4.6.6]{MatsukiMMP}.
According to Proposition~\ref{prop:dimstrata},
every $\gamma_0\in\rlv(X)\setminus\cov(X)$
defines a stratum $X(\gamma_0)$ of positive dimension.
This has to be smooth, in particular factorial.
Therefore its weights generate $\Cl(X)$ as an abelian group.
\end{remark}


\section{Combinatorial minimality}
\label{sec:combmin-rho}

In this Section, we derive bounds on the Picard 
number of combinatorially minimal, $\QQ$-factorial, 
log terminal Fano varieties with torus action 
of complexity one. 
We begin by recalling the necessary notions
and facts from~\cite{HuKe, Ha2}.

\medskip
A \emph{small quasimodification}\index{small quasimodification}
is a birational map $X \dasharrow X'$ 
of complete varieties that restricts to 
a regular isomorphism $U \to U'$ between 
open subsets $U \subseteq X$ and $U' \subseteq X'$ 
having complements of codimension at least two 
in $X$ and $X'$ respectively.
We say that $X$ is \emph{combinatorially minimal} if it 
has no contractible prime divisors in the sense that
any birational map $X \dasharrow X'$ which is defined in 
codimension two is a small quasimodification.

\begin{remark}
\label{rem:combmin}
A projective variety $X$ with finitely generated Cox ring
is combinatorially minimal if and only if
its cone of movable divisor classes 
coincides with its cone of effective divisor classes. 
In particular for a variety $X = X(A,P)$ of complexity one,
this precisely means that every extremal ray of the effective cone 
$\Eff(X) \subseteq \Cl_{\QQ}(X)$ hosts the degrees of 
at least two of the generators $T_{ij}, S_{k}$ of the 
Cox ring $\mathcal{R}(X) = R(A,P)$, see~\cite{Ha2}.
\end{remark}

First we give a characterization for the defining matrix $P$
of a combinatorially minimal variety $X=X(A,P)$.
Consider two mutually dual exact sequences of
finite dimensional rational vector spaces
$$
\begin{tikzpicture}
\node (Z1) {$0$};
\node (Z2) [below of=Z1, node distance=1cm] {$0$};
\node (L) [right of=Z1, node distance=1.5cm] {$L_\QQ$};
\node (K) [below of=L, node distance=1cm] {$K_\QQ$};
\node (F) [right of=L, node distance=2cm] {$F_\QQ$};
\node (E) [below of=F, node distance=1cm] {$E_\QQ$};
\node (N) [right of=F, node distance=2cm] {$N_\QQ$};
\node (M) [below of=N, node distance=1cm] {$M_\QQ$};
\node (Z4) [right of=N, node distance=1.5cm] {$0$};
\node (Z3) [below of=Z4, node distance=1cm] {$0$};
\draw[->] (Z1) to (L);
\draw[->] (L) to (F);
\draw[->] (F) to (N);
\draw[->] (N) to (Z4);
\draw[->] (K) to (Z2);
\draw[->] (E) to node[below] {{\footnotesize $Q$}} (K);
\draw[->] (M) to (E);
\draw[->] (Z3) to (M);
\end{tikzpicture}
$$
Denote by $(f_1,\ldots,f_r)$ a basis for $F_\QQ$
and by $(e_1,\ldots,e_r)$ its dual basis for $E_\QQ$.
Moreover define $\delta:=\cone(f_1,\ldots,f_r)$
and $\gamma:=\cone(e_1,\ldots,e_r)$.
We call an element $e\in E_\QQ$ an \emph{$L_\QQ$-invariant separating linear form}
\index{invariant separating linear form}
for $\delta_1,\delta_2 \preceq \delta$ if
$$
e|_{L_\QQ} = 0, \quad e|_{\delta_1}=0, \quad e|_{\delta_2}=0, \quad 
\delta_1 \cap e^\perp = \delta_2 \cap e^\perp = \delta_1 \cap \delta_2 .
$$

\begin{lemma}[{Invariant Separation Lemma~\cite[2.2.3.2]{ArDeHaLa}}]
\label{lemma:ISL}
Consider $\delta_1,\delta_2\preceq\delta$ and
their corresponding faces $\gamma_i := \delta_i^\perp\cap\gamma \preceq\gamma$.
Then the following statements are equivalent:
\begin{itemize}
\item
there exists an $L_\QQ$-invariant separating linear form for $\delta_1,\delta_2$;
\item
$Q(\gamma_1)^\circ \cap Q(\gamma_2)^\circ \neq \emptyset$.
\end{itemize}
\end{lemma}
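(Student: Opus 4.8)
The plan is to reduce both conditions to the existence of a single vector in $\ker Q$ with a prescribed sign pattern on its coordinates, and then to observe that the two resulting conditions agree up to an overall sign. First I would record the dictionary coming from the two dual sequences. As $Q\colon E_\QQ\to K_\QQ$ is the transpose of the inclusion $L_\QQ\hookrightarrow F_\QQ$, evaluating $Q(e)$ amounts to restricting the linear form $e\in E_\QQ=F_\QQ^*$ to $L_\QQ$; hence the invariance $e|_{L_\QQ}=0$ is equivalent to $e\in\ker Q$. Next I would make the faces explicit: writing $\delta_\nu=\cone(f_j;\ j\in I_\nu)$ for index sets $I_1,I_2\subseteq\{1,\ldots,r\}$, the complementary faces are the coordinate cones $\gamma_\nu=\delta_\nu^\perp\cap\gamma=\cone(e_j;\ j\notin I_\nu)$, so $\relint\gamma_\nu$ consists exactly of the combinations $\sum_{j\notin I_\nu}\lambda_j e_j$ with all coefficients strictly positive.

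Writing $e=\sum_j c_j e_j$, so that $c_j=e(f_j)$, I would translate the separating conditions ($e$ nonnegative on $\delta_1$, nonpositive on $\delta_2$, with $\delta_\nu\cap e^\perp=\delta_1\cap\delta_2$) into the sign pattern $c_j>0$ for $j\in I_1\setminus I_2$, $c_j<0$ for $j\in I_2\setminus I_1$, $c_j=0$ for $j\in I_1\cap I_2$, and $c_j$ unconstrained for $j\notin I_1\cup I_2$. Here the weak signs come from nonnegativity on $\delta_1$ and nonpositivity on $\delta_2$, while the touching condition $\delta_\nu\cap e^\perp=\delta_1\cap\delta_2$ is precisely what forces strictness away from the common face $\delta_1\cap\delta_2$. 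Thus the first statement says that $\ker Q$ contains a vector with this pattern. For the second statement I would use the elementary identity $Q(\gamma_\nu)^\circ=Q(\relint\gamma_\nu)$ to see that a point of $Q(\gamma_1)^\circ\cap Q(\gamma_2)^\circ$ is the same datum as a pair $v_1\in\relint\gamma_1$, $v_2\in\relint\gamma_2$ with $v_1-v_2\in\ker Q$; expanding $v_1-v_2$ in the basis $(e_j)$ gives a vector of $\ker Q$ whose coordinates are $>0$ on $I_2\setminus I_1$, $<0$ on $I_1\setminus I_2$, $=0$ on $I_1\cap I_2$, and free on the complement.

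These two sign patterns are negatives of one another, so the correspondence $e\leftrightarrow v_2-v_1$ matches them: from a separating form $e$ I recover $v_1,v_2$ by reading off the strictly signed coordinates of $e$ as the nonzero coordinates of $v_1$ and $v_2$ and choosing strictly positive $\lambda_j,\mu_j$ with $\mu_j-\lambda_j=c_j$ on the free indices, and the reverse reading gives the converse. I expect the only genuine work to be this bookkeeping at the level of the relative interiors: checking that the strict positivity built into $\relint\gamma_\nu$ corresponds exactly to the face-touching condition $\delta_\nu\cap e^\perp=\delta_1\cap\delta_2$ rather than to mere weak separation, and that the unconstrained coordinates can always be realized by strictly positive $\lambda_j,\mu_j$. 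The remaining ingredients are just the duality dictionary and the identity $Q(\relint\gamma_\nu)=Q(\gamma_\nu)^\circ$.
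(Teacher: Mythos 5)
Your proof is correct, but note that on this particular statement there is nothing in the paper to compare against: the lemma is quoted, with proof deferred, from \cite{ArDeHaLa}, Prop.~2.2.3.2, so your argument supplies a proof where the paper only cites one. Your route is the standard linear Gale duality bookkeeping, and all the steps check out. Since $\delta$ is the orthant on the basis $(f_1,\ldots,f_r)$, every face is a coordinate cone $\delta_\nu=\cone(f_j;\ j\in I_\nu)$ with complementary face $\gamma_\nu=\cone(e_j;\ j\notin I_\nu)$; the separating-form conditions translate exactly into your sign pattern (strict positivity on $I_1\setminus I_2$, strict negativity on $I_2\setminus I_1$, vanishing on $I_1\cap I_2$, no constraint elsewhere), where the strictness on the symmetric difference is precisely equivalent to the touching condition $\delta_\nu\cap e^\perp=\delta_1\cap\delta_2$ and the vanishing on $I_1\cap I_2$ is forced by the two weak inequalities; and the identity $Q(\gamma_\nu)^\circ=Q(\relint\gamma_\nu)$ (relative interior commutes with linear images of convex polyhedral cones) turns the second condition into the existence of $v_1-v_2\in\ker Q$ with the opposite sign pattern. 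Since $\ker Q$ is a linear subspace, the two patterns correspond under $e\mapsto -e$, and your recipe for realizing the unconstrained coordinates by strictly positive $\lambda_j,\mu_j$ (say $\lambda_j=\max(d_j,0)+1$, $\mu_j=\lambda_j-d_j$) settles the only delicate point. One remark worth making explicit: the paper's displayed definition of an $L_\QQ$-invariant separating linear form reads $e|_{\delta_1}=0$, $e|_{\delta_2}=0$, which is a typo --- taken literally it would force $\delta_1=\delta_2$ via the touching condition; you correctly worked with the intended conditions $e|_{\delta_1}\ge 0$, $e|_{\delta_2}\le 0$ from \cite{ArDeHaLa}, and your proof establishes the lemma in that (correct) form.
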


Define the weights $w_{ij} := \deg(T_{ij})$ and $w_k := \deg(S_k)$.
We call a variable $T_{ij},S_k$ \emph{extremal}\index{extremal variable}
if its weight $w_{ij},w_k \in \Cl_{\QQ}(X)$ 
sits on an extremal ray of the effective cone 
$\Eff(X) \subseteq \Cl_{\QQ}(X)$.
Moreover, we call a weight $w \in \{w_{ij},w_k\}$
\emph{exceptional}\index{exceptional weight},
if $\QQ_{\ge0}w$ is an extremal ray of $\Eff(X)$ and
no other weight lies on $\QQ_{\ge0}w$.
All variables with exceptional weights
are extremal, but the converse does not hold.
As a matter of fact, by Remark~\ref{rem:combmin},
$X$ is combinatorially minimal if and only if no weight is exceptional.

\medskip
Only in this Section, we rename the weights $w_1,\dots,w_{n+m}$
and let $v_1,\dots,v_{n+m}$ be the corresponding columns of $P$.

\begin{lemma}
\label{lem:excepweight}
The weight $w\in\{w_{ij},w_k\}$ is exceptional if and only if
$\QQ^{r+s}$ is already generated as a cone by all columns of P
except the one that corresponds to $w$.
\end{lemma}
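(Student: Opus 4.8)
The plan is to reformulate each side of the equivalence as the (non)existence of a suitable separating linear form and then pass between the weight space $K_\QQ$ and the column space $N_\QQ=\QQ^{r+s}$ through the relation $\ker Q=\operatorname{im}(P^*)$ of Construction~\ref{constr:RAPdown}. Throughout I would write $w=w_{i_0}$ with corresponding column $v_{i_0}$, and recall from Remark~\ref{rem:divcones} that $\Eff(X)=Q(\gamma)=\cone(w_1,\dots,w_{n+m})$.

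First I would reduce exceptionality to a clean convex-geometric condition: $w_{i_0}$ is exceptional if and only if $w_{i_0}\notin\cone(w_j;\ j\neq i_0)$. Here one uses that $\Eff(X)$ is pointed, which by a standard duality is equivalent to the standing hypothesis that the columns of $P$ generate $N_\QQ$ as a cone (so that $\operatorname{im}(P^*)\cap\gamma=\{0\}$ and no weight vanishes). Indeed, if $w_{i_0}$ lies in the cone of the other weights then either its ray is not extremal or another weight sits on it, so it fails to be exceptional; conversely, if $w_{i_0}$ lies outside that cone, its ray is extremal and carries no further weight.

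Next I would dualize the other side: $\cone(v_j;\ j\neq i_0)=N_\QQ$ holds if and only if the dual cone is trivial, i.e. there is no nonzero $u\in N_\QQ^*$ with $\langle u,v_j\rangle\ge0$ for all $j\neq i_0$; moreover, since the full column set spans $N_\QQ$, any such $u$ necessarily has $\langle u,v_{i_0}\rangle<0$. The bridge between the two reformulations is the identity $\ker Q=\operatorname{im}(P^*)$ together with $P^*(u)=(\langle u,v_1\rangle,\dots,\langle u,v_{n+m}\rangle)$. Given a relation $w_{i_0}=\sum_{j\neq i_0}c_j w_j$ with $c_j\ge0$, the element $e_{i_0}-\sum_{j\neq i_0}c_j e_j$ lies in $\ker Q=\operatorname{im}(P^*)$ and hence equals $P^*(u)$ for some $u$ with $\langle u,v_{i_0}\rangle=1$ and $\langle u,v_j\rangle=-c_j\le0$; then $-u$ exhibits $\cone(v_j;\ j\neq i_0)$ as a proper cone. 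Conversely, a separating $u$ as above gives $m:=-P^*(u)\in\ker Q$ with $m_{i_0}>0$ and $m_j\le0$ for $j\neq i_0$, and $Q(m)=0$ rewrites as $\sum_j m_j w_j=0$, that is $w_{i_0}\in\cone(w_j;\ j\neq i_0)$. Chaining these two directions with the reformulations above yields the asserted equivalence.

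The main obstacle, and really the only non-bookkeeping point, is that the equivalence is \emph{false} without the standing hypothesis that the columns of $P$ positively span $N_\QQ$: dropping it, both ``alternatives'' can hold simultaneously. I therefore have to invoke this hypothesis at exactly the two places where it is needed, namely to ensure that $\Eff(X)=Q(\gamma)$ is pointed (so that exceptionality reduces to $w_{i_0}\notin\cone(w_j;\ j\neq i_0)$) and that any $u$ nonnegative on the columns $v_j$ with $j\neq i_0$ is automatically negative on $v_{i_0}$. The remainder is a careful transposition of signs through $P^*$.
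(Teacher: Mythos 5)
Your proof is correct, but it follows a genuinely different route than the paper. The paper characterizes exceptionality of $w_i$ by the existence of a separating linear form between $\cone(w_j;\ j\neq i)$ and $\cone(w_i)$, and then applies the Invariant Separation Lemma~\ref{lemma:ISL} as a black box: such a form exists if and only if the Gale-dual cones meet in their relative interiors, i.e. $v_i\in\cone(v_j;\ j\neq i)^\circ$, which under the standing assumption that all columns of $P$ generate $\QQ^{r+s}$ as a cone is equivalent to $\cone(v_j;\ j\neq i)=\QQ^{r+s}$. You instead re-derive exactly the needed instance of Gale duality by hand: exceptionality is reformulated as $w_{i_0}\notin\cone(w_j;\ j\neq i_0)$ (via pointedness of $\Eff(X)$), the column-side condition is dualized to the nonexistence of a nonzero functional that is nonnegative on all $v_j$ with $j\neq i_0$, and the two sides are matched through explicit elements of $\ker Q=\operatorname{im}(P^*)$. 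Both arguments rest on the same dual exact sequences and both invoke the spanning hypothesis; yours is self-contained and makes explicit the two places where that hypothesis enters (pointedness of $Q(\gamma)$ together with nonvanishing of the weights, and the automatic negativity $\langle u,v_{i_0}\rangle<0$), at the cost of redoing work that Lemma~\ref{lemma:ISL} packages once and that the paper reuses repeatedly afterwards (e.g. in Corollary~\ref{cor:combmin-char}, Lemma~\ref{lem:degineff} and Lemma~\ref{lem:cone-d+1}).
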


\begin{proof}
An exceptional weight $w_i$ is characterized by the fact that
$\cone(w_j ; j\neq i)$ and $\cone(w_i)$ allow a separating linear form.
By the Invariant Separation Lemma~\ref{lemma:ISL}, this is equivalent to the fact
that the corresponding Gale dual cones intersect in their relative interiors, i.e.
$$
\cone(v_j ; j\ne i)^\circ \cap \cone(v_i)^\circ \neq \emptyset .
$$
This is the case if and only if $v_i\in\cone(v_j ; j\ne i)^\circ$ holds.
Since the cone over all columns is $\QQ^{r+s}$,
the last condition is equivalent to $\cone(v_j ; j\ne i) = \QQ^{r+s}$.
\end{proof}

\begin{proposition}
\label{prop:combmin-char}
The variety $X=X(A,P)$ is combinatorially minimal if and only if
for every column $v_\varrho$ of $P$ the following holds:
$$
\cone(v_{\varrho'}; \  \varrho'\in\Sigma^{(1)}\setminus\{\varrho\}) \ \neq \ \QQ^{r+s}.
$$
\end{proposition}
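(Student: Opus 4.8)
The plan is to obtain the stated condition as a purely logical negation, column by column, of the characterizations of combinatorial minimality and of exceptional weights that have just been assembled. No new geometry should be needed: the substantive content already lives in Remark~\ref{rem:combmin} and Lemma~\ref{lem:excepweight}, so the proof amounts to chaining these together and translating between weights and columns.

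First I would record the equivalence stated immediately before this Proposition: by Remark~\ref{rem:combmin} together with the definition of an exceptional weight, $X=X(A,P)$ is combinatorially minimal if and only if none of the weights $w\in\{w_{ij},w_k\}$ is exceptional. This replaces the global notion of combinatorial minimality by a condition that can be tested one weight at a time. Next I would feed each individual weight into Lemma~\ref{lem:excepweight}: the weight $w$ attached to a column $v$ of $P$ is exceptional precisely when the remaining columns already span $\QQ^{r+s}$ as a cone. Negating this, $w$ fails to be exceptional exactly when dropping $v$ strictly shrinks the spanned cone, i.e. $\cone(\text{columns} \neq v)\neq\QQ^{r+s}$. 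Quantifying over all weights, ``no weight is exceptional'' becomes ``for every column $v$ of $P$, the remaining columns do not generate $\QQ^{r+s}$''.

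The final step is bookkeeping. By Construction~\ref{constr:RAPFano} the columns of $P$ are exactly the primitive ray generators of the fan $\Sigma$, and since by Construction~\ref{constr:RAPdown} these columns are pairwise different primitive vectors, distinct columns give distinct rays; hence the columns are in bijection with $\Sigma^{(1)}$. Rewriting the condition from the previous step in terms of rays $\varrho\in\Sigma^{(1)}$ and their generators $v_\varrho$ yields precisely $\cone(v_{\varrho'};\ \varrho'\in\Sigma^{(1)}\setminus\{\varrho\})\neq\QQ^{r+s}$ for every $\varrho$, which is the assertion.

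I do not expect a genuine obstacle here. The only point demanding care is keeping straight the triple dictionary between the generators $T_{ij},S_k$ of the Cox ring, their weights $w_{ij},w_k$, and the columns $v_\varrho$ of $P$ (equivalently the rays of $\Sigma$), together with the tacit use, guaranteed by Construction~\ref{constr:RAPdown}, that the cone over all columns of $P$ is the whole $\QQ^{r+s}$ — this is what makes the negated span condition the right notion. Managing this dictionary cleanly is essentially the entire content of the argument.
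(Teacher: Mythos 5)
Your proposal is correct and follows exactly the paper's route: the paper's own proof is the one-line statement that the assertion follows directly from Lemma~\ref{lem:excepweight}, with the reduction to ``no weight is exceptional'' (via Remark~\ref{rem:combmin}) and the identification of columns of $P$ with the rays of $\Sigma$ left implicit, which you merely spell out. Nothing is missing and nothing differs in substance.
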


\begin{proof}
The assertion follows directly from Lemma~\ref{lem:excepweight}.
\end{proof}

Given a matrix $P$ and an index-set $I\subset\ZZ_{\ge1}$,
we denote with $P_I$ the matrix obtained from $P$
by deleting the $i$-th column, for all $i\in I$.

\begin{corollary}
\label{cor:combmin-char}
Consider the variety $X=X(A,P)$ with rank of $\Cl(X)$ being $\delta(X)\geq2$.
Assume that $\cone(w_i)=\cone(w_j)$ holds
for two weights $w_i,w_j$, with $i\neq j$.
Then $\det(P_{I}) = 0$ holds for every $I \subset\{1,\ldots,n+m\}$
such that $i,j\in I$ and $\vert I \vert = \delta(X)$.
\end{corollary}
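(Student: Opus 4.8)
The plan is to convert the vanishing of $\det(P_I)$ into a linear-independence statement about the weights $w_\ell$ by Gale duality, and then to read off the dependence forced by the hypothesis $\cone(w_i)=\cone(w_j)$. First I would set up the dimension bookkeeping. Since the columns of $P$ generate $\QQ^{r+s}$ as a cone, $P$ has rank $r+s$, so $P^*$ is injective and the sequence $0 \to \QQ^{r+s} \xrightarrow{P^*} \QQ^{n+m} \xrightarrow{Q} K_{\QQ} \to 0$ is exact. Hence $\delta(X)=\dim K_{\QQ}=(n+m)-(r+s)$, and deleting the $\delta(X)$ columns indexed by $I$ leaves exactly $r+s$ columns. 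Thus $P_I$ is a square $(r+s)\times(r+s)$ matrix and $\det(P_I)$ is meaningful.

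The key step is the Gale duality dictionary. Writing $S:=\{1,\ldots,n+m\}\setminus I$, so that $P_I$ is the matrix whose columns are the $v_\ell$ with $\ell\in S$, I would prove the equivalence
$$
\det(P_I)\neq 0
\quad\Longleftrightarrow\quad
\{w_\ell;\ \ell\in I\}\ \text{is a basis of}\ K_{\QQ}.
$$
The argument runs through the standard chain: since $w_\ell=Q(e_\ell)$ and $|I|=\delta(X)=\dim K_{\QQ}$, the family $\{w_\ell;\ \ell\in I\}$ is a basis iff $Q$ restricts to an isomorphism on the coordinate subspace $U:=\operatorname{span}(e_\ell;\ \ell\in I)$, iff $U\oplus\operatorname{im}(P^*)=\QQ^{n+m}$, iff the projection of $\QQ^{n+m}$ onto the $S$-coordinates maps $\operatorname{im}(P^*)$ isomorphically onto $\QQ^S$. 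The matrix of this last map is the block of $S$-rows of $P^*$, i.e. the transpose of $P_I$, which is invertible precisely when $\det(P_I)\neq 0$.

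Finally I would invoke the hypothesis. The condition $\cone(w_i)=\cone(w_j)$ with $i\neq j$ says that $w_i$ and $w_j$ lie on a common ray, hence are linearly dependent. As $i,j\in I$, the family $\{w_\ell;\ \ell\in I\}$ contains a dependent pair, so its span has dimension at most $\delta(X)-1<\dim K_{\QQ}$; in particular it is not a basis of $K_{\QQ}$. By the dictionary above this forces $\det(P_I)=0$, which is the assertion.

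The main obstacle is not a deep one: the only point requiring care is the Gale duality equivalence together with the rank and dimension count, and in particular verifying that $P$ has full rank $r+s$ so that deleting $\delta(X)$ columns yields a square matrix and $K_{\QQ}$ has the expected dimension. Once these are in place, the conclusion is a one-line linear-algebra observation, so I expect the bulk of the write-up to consist of stating the Gale correspondence cleanly rather than any genuine computation.
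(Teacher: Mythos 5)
Your proof is correct, but it follows a genuinely different route from the paper's. The paper stays within the convex-geometric separation framework of Section~\ref{sec:combmin-rho}: from $\cone(w_i)=\cone(w_j)$ with $i\neq j$ it deduces that neither $w_i$ nor $w_j$ is exceptional, so by Lemma~\ref{lem:excepweight} neither $\tau_i=\cone(v_\ell;\,\ell\neq i)$ nor $\tau_j=\cone(v_\ell;\,\ell\neq j)$ equals $\QQ^{r+s}$; then, since $\cone(w_i)^\circ\cap\cone(w_j)^\circ\neq\emptyset$, the Invariant Separation Lemma~\ref{lemma:ISL} yields a separating linear form for $\tau_i$ and $\tau_j$, so $\tau_i\cap\tau_j=\cone(v_\ell;\,\ell\neq i,j)$ is a proper face of both cones, hence not full-dimensional, and therefore every collection of $r+s$ columns of $P$ avoiding $v_i,v_j$ is linearly dependent. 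You bypass all of this with the linear-algebra dictionary that $\det(P_I)\neq 0$ holds if and only if $\{w_\ell;\,\ell\in I\}$ is a basis of $\Cl_{\QQ}(X)$, proved via the exact sequence and the splitting $U\oplus\operatorname{im}(P^*)=\QQ^{n+m}$, and then you note that the hypothesis places a linearly dependent pair inside this family of $\delta(X)$ vectors. Your dictionary argument and the dimension bookkeeping $n+m=r+s+\delta(X)$ (which is what makes $P_I$ square) are sound. Your route is more elementary (no separation lemma, no exceptional-weight lemma), it gives an equivalence rather than a single implication, and it in fact proves a slightly stronger statement: only linear dependence of $w_i$ and $w_j$ in $\Cl_{\QQ}(X)$ is needed, not equality of the rays. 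What the paper's route buys is uniformity: it reuses exactly the machinery (Lemmas~\ref{lemma:ISL} and~\ref{lem:excepweight}) already set up for Proposition~\ref{prop:combmin-char}, and it records the conclusion in the face-theoretic language ($\tau_i\cap\tau_j$ a common proper face) that matches the fan-theoretic arguments appearing later in the classification.
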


\begin{proof}
By construction we have $n+m=r+s+\delta(X)$. Define
\begin{align*}
\tau_i :=& \cone(v_1,\ldots,\widehat{v_i},\ldots,v_{n+m}), \\
\tau_j :=& \cone(v_1,\ldots,\widehat{v_j},\ldots,v_{n+m}).
\end{align*}
According to Lemma~\ref{lem:excepweight} neither of them equals $\QQ^{r+s}$.
Moreover, by the Invariant Separation Lemma, there is a linear form separating them,
i.e. $\tau_i \cap \tau_j$ is a proper face of both cones.
This means that $\tau_i \cap \tau_j =
\cone(v_1,\ldots,\widehat{v_i},\ldots,\widehat{v_j},\ldots,v_{n+m})$
is not full-dimensional.
In particular any collection of $r+s$ columns of $P$ that
does not contain $v_i$ nor $v_j$ is linearly dependent.
\end{proof}

In the sequel, let $\alpha$ denote the difference 
between the number of extremal rays of the 
effective cone $\Eff(X)$ and the rank $\delta(X)$
of the divisor class group $\Cl(X)$.
Broadly speaking, $\alpha$ measures how far
$\Eff(X)$ is from being simplicial.

\begin{lemma}
\label{lem:picbound1}
Let $X = X(A,P)$ be combinatorially minimal. 
Then one of the following holds:
\begin{itemize}
\item
$\dim(X) \ge \delta(X)$ and $m\ge 2\delta(X)-2$;
\item
$\dim(X) \ge \alpha + 2 + m/2$ and  $m< 2\delta(X)-2$.
\end{itemize}
\end{lemma}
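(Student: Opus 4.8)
The plan is to recast everything in terms of the effective cone $\Eff(X) = Q(\gamma) \subseteq \Cl_\QQ(X)$ (Remark~\ref{rem:divcones}), which is a pointed, full-dimensional cone in $\QQ^{\delta(X)}$. I write $N$ for its number of extremal rays, so that $\alpha = N - \delta(X)$. Three facts drive the argument. First, by construction $n+m = r+s+\delta(X)$ and $\dim(X) = s+1$, while $n = n_0 + \cdots + n_r \ge r+1$. Second, combinatorial minimality means that every extremal ray of $\Eff(X)$ hosts at least two of the weights $w_{ij}, w_k$ (Remark~\ref{rem:combmin}). Third, since all trinomials $g_I$ share one degree $\mu$ (Construction~\ref{constr:RAPdown}), each block satisfies $\sum_{j} l_{ij} w_{ij} = \mu$, with the \emph{same} $\mu$ for every $i = 0,\ldots,r$; I may assume $r\ge 2$, since otherwise there are no relations and $X$ is toric.

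I would first dispose of the case $m \ge 2\delta(X) - 2$, where no minimality is needed. From $s = n+m-r-\delta(X)$ one computes $\dim(X) - \delta(X) = (s+1) - \delta(X) = (n-r) + (m - 2\delta(X) + 1)$; since $n-r \ge 1$ and $m - 2\delta(X)+1 \ge -1$, this is nonnegative, giving $\dim(X) \ge \delta(X)$, the first alternative.

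For $m < 2\delta(X) - 2$ I would first show that $\mu$ lies on no extremal ray of $\Eff(X)$. Indeed, if $\mu$ sat on an extremal ray $\varrho_0$, then expressing $\mu = \sum_j l_{ij} w_{ij}$ as a positive combination of cone elements forces every $w_{ij}$ onto $\varrho_0$; thus all $T$-weights are collinear, every extremal ray other than $\varrho_0$ carries only $S$-weights, and minimality forces each of these to host at least two of the $m$ weights $w_k$. As a pointed full-dimensional cone has at least $\delta(X)$ extremal rays, there would be at least $\delta(X) - 1$ such pure-$S$ rays, whence $m \ge 2(\delta(X)-1)$, contradicting the case hypothesis.

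With $\mu$ off every extremal ray, I run the main count. Let $\theta_T$ and $\theta_S$ be the numbers of $T$- and $S$-weights that sit on extremal rays; minimality gives $2N \le \theta_T + \theta_S$, and trivially $\theta_S \le m$. I bound $\theta_T$ block by block: a block with $n_i = 1$ has its unique weight $w_{i1} = \mu/l_{i1}$ parallel to $\mu$, hence non-extremal, contributing $0$, while a block with $n_i \ge 2$ contributes at most $n_i \le 2(n_i - 1)$. Summing gives $\theta_T \le 2\sum_i (n_i - 1) = 2(n-r-1)$, so $N \le (n-r-1) + \tfrac{m}{2}$; substituting $\alpha = N - \delta(X)$ and $\delta(X) = n+m-r-s$ turns this into exactly $\dim(X) = s+1 \ge \alpha + 2 + \tfrac{m}{2}$, the second alternative. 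The main obstacle is the borderline behavior of $\mu$: the per-block clustering of $T$-weights toward $\mu$ is what yields the crucial $-(r+1)$ saving over the naive estimate $2N \le n+m$, and the argument hinges on first excluding the degenerate collinear configuration — which is precisely where the hypothesis $m < 2\delta(X)-2$ is consumed. I would also verify the standing structural points (that $\Eff(X)$ is pointed and full-dimensional, and that a weight parallel to a non-extremal $\mu$ is itself non-extremal).
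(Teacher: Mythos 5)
Your proof is correct and follows essentially the same route as the paper's: the same case split according to whether $m \ge 2\delta(X)-2$, the same use of combinatorial minimality (two weights on each extremal ray of $\Eff(X)$), and the same key point that homogeneity prevents a block with $n_i=1$ from carrying an extremal weight once the collinear configuration is excluded by $m<2\delta(X)-2$; your upper bound $\theta_T\le 2(n-r-1)$ is just the paper's lower bound on extremal $T$-variables read in reverse, and both amount to the inequality $n-(r+1)\ge \delta(X)+\alpha-m/2$. The only caveat is your reduction to $r\ge 2$: the lemma as stated also covers $r=1$, where $X(A,P)$ is toric, but your argument applies there verbatim, since the identity $\deg(T_0^{l_0})=\deg(T_1^{l_1})=\mu$ comes from $Q\circ P^{*}=0$ rather than from the existence of trinomial relations, so no case actually needs to be discarded.
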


\begin{proof}
The effective cone $\Eff(X) \subseteq \Cl_\QQ(X)$ is 
of full dimension and  has 
$\delta(X) + \alpha$ vertices.
Since $X$ is combinatorially minimal, 
the number $n+m$ of variables $T_{ij}$, $S_{k}$ is 
bounded from below by $n+m \ge 2\delta(X) + 2\alpha$.
If the number $m$ of variables $S_k$ satisfies
$m \ge 2 \delta(X) - 2$, then the assertion 
follows from
$$ 
\dim(X)
\ = \ 
n + m + 2 - (r+1) - \delta(X)
\ \ge \ 
m + 2 - \delta(X)
\ \ge \ 
\delta(X).
$$
So, consider the case $m < 2 \delta(X) - 2$.
There are at least two extremal variables of type
$T_{ij}$ having their weights on different rays
and we have in total at least 
$2 \delta(X) + 2 \alpha - m$ 
extremal variables of type $T_{ij}$.
For each of these  $T_{ij}$, 
we must have $n_i \ge 2$. This gives
$$ 
n-(r+1) \ = \ (n_0-1) + \ldots + (n_r-1)
\ \ge \ 
\frac{2 \delta(X) + 2 \alpha - m}{2}.
$$
The assertion then follows from 
\begin{align*}
\dim(X) \ &= \ 
n + m + 2 - (r+1) - \delta(X) \\
&\ge \ \frac{2 \delta(X) + 2 \alpha - m}{2}
+ m + 2 - \delta(X) \\
&= \ \alpha + \frac{m}{2} + 2.
\end{align*}
\end{proof}

For $X = X(A,P)$, we denote by $\eta$ the number of extremal 
variables of type $T_{ij}$ and by $\zeta = n - \eta$ 
the number of non-extremal variables of type $T_{ij}$.

\begin{lemma}
\label{lem:picbound2}
Let $X = X(A,P)$ be combinatorially minimal.
Then we have
$$
\delta(X)
\ \le \ 
\dim(X) + r - 1 - \zeta - 2 \alpha.
$$
\end{lemma}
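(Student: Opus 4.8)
The plan is to bound $\delta(X)$ by counting variables and using the dimension formula from Construction~\ref{constr:RAPFano}, just as in the proof of Lemma~\ref{lem:picbound1}, but now tracking the non-extremal $T_{ij}$ via $\zeta$ rather than estimating crudely. First I would recall the dimension identity $\dim(X) = n + m + 2 - (r+1) - \delta(X)$, which rearranges to $\delta(X) = n + m + 1 - r - \dim(X)$. So it suffices to produce a lower bound on $n+m$ in terms of $\dim(X)$, $r$, $\zeta$ and $\alpha$; concretely, the target inequality is equivalent to $n + m \ge 2\dim(X) + \zeta + 2\alpha - 2r + r = 2\dim(X) + \zeta + 2\alpha - r$ after substitution, so the real content is a counting estimate for the number of variables.

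The main idea is to separate the $n$ variables of type $T_{ij}$ into the $\eta = n - \zeta$ extremal ones and the $\zeta$ non-extremal ones, and to bound each block. For the extremal part, combinatorial minimality (Remark~\ref{rem:combmin}) forces every extremal ray of $\Eff(X)$ to carry at least two generator weights; since $\Eff(X)$ has $\delta(X) + \alpha$ extremal rays, the total number of extremal variables among all $T_{ij}$ and $S_k$ is at least $2\delta(X) + 2\alpha$. Writing $m_{\mathrm{ext}}$ for the number of extremal $S_k$ (so $m_{\mathrm{ext}} \le m$), this reads $\eta + m_{\mathrm{ext}} \ge 2\delta(X) + 2\alpha$, hence $\eta \ge 2\delta(X) + 2\alpha - m$. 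Next I would exploit that each extremal $T_{ij}$ sits in a block with $n_i \ge 2$, so these $\eta$ variables are distributed among blocks contributing, via $n - (r+1) = \sum_i (n_i - 1)$, at least roughly $\eta/2$ to $n - (r+1)$; adding back the $\zeta$ non-extremal variables, which each contribute one to $n$, refines the bound $n - (r+1) \ge \eta/2 + (\text{contribution of }\zeta)$.

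The step I expect to be the main obstacle is the precise bookkeeping of how $\zeta$ enters $n - (r+1) = \sum_{i=0}^r (n_i - 1)$. One must argue that the $\zeta$ non-extremal variables either enlarge some block beyond what the extremal count already charges, or occupy blocks not yet counted, so that they add a full $+\zeta$ rather than being absorbed; getting the constant right (the coefficient $1$ on $\zeta$, and the $r-1$ shift) requires care about blocks containing a mix of extremal and non-extremal variables and about the degenerate case $m \ge 2\delta(X)-2$ handled separately in Lemma~\ref{lem:picbound1}. Once the estimate $n + m \ge 2\delta(X) + 2\alpha + \zeta - (r-1)$ (or the equivalent refined form) is established, substituting into $\delta(X) = n + m + 1 - r - \dim(X)$ and solving for $\delta(X)$ yields $\delta(X) \le \dim(X) + r - 1 - \zeta - 2\alpha$ directly.
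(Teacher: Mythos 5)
Your key ingredient is correct: combinatorial minimality forces at least two generator weights on each of the $\delta(X)+\alpha$ extremal rays of $\Eff(X)$, so $\eta + m \ge \eta + m_{\mathrm{ext}} \ge 2\delta(X)+2\alpha$. But the route you chart from there does not reach the stated bound, for two concrete reasons. First, your algebraic reformulation of the target is wrong: substituting the identity $n+m = \dim(X)+\delta(X)+r-1$ into $\delta(X) \le \dim(X)+r-1-\zeta-2\alpha$ shows that the lemma is equivalent to the lower bound $n+m \ge 2\delta(X)+2\alpha+\zeta$, i.e.\ a bound in terms of $\delta(X)$; rewritten in terms of $\dim(X)$ the target would be an \emph{upper} bound $n+m \le 2\dim(X)+2r-2-\zeta-2\alpha$, not the inequality $n+m \ge 2\dim(X)+\zeta+2\alpha-r$ you state. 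Second, the estimate you plan to establish at the end, $n+m \ge 2\delta(X)+2\alpha+\zeta-(r-1)$, is too weak: inserted into the identity it only gives $\delta(X) \le \dim(X)+2(r-1)-\zeta-2\alpha$, which misses the target by $r-1$.

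The block bookkeeping you anticipate as the main obstacle is in fact unnecessary; it is the machinery of Lemma~\ref{lem:picbound1}, and importing it here is exactly what costs you the $r-1$. Once you have $\eta+m \ge 2\delta(X)+2\alpha$, simply add $\zeta$ to both sides: $n+m = \eta+\zeta+m \ge 2\delta(X)+2\alpha+\zeta$. Combining this with $n+m = \dim(X)+\delta(X)+r-1$ and cancelling one copy of $\delta(X)$ yields the lemma immediately. This one-line count is precisely the paper's proof; the question of how $\zeta$ enters $\sum_i(n_i-1)$ never arises, because $\zeta$ enters only through the trivial identity $n = \eta + \zeta$.
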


\begin{proof}
Just observe that $\dim(X) + \delta(X) + r-1
= \eta + \zeta + m \ge2 \delta(X) + 2 \alpha + \zeta$
holds.
\end{proof}

The above estimate is useful for large $\zeta$, for example 
$\zeta \ge r-2$. For small $\zeta$, we need statements concerning
the cases $\alpha = 0$ and $\alpha = 1$. 
Observe that the following three Lemmas,
including the estimate for the case $\alpha=0$,
do not require combinatorial minimality.

\begin{lemma}
\label{lem:exbigcone}
Let $X=X(A,P)$ be log terminal and Fano.
Then $m<\dim(X)$ implies the existence of a big cone in $\Sigma$.
\end{lemma}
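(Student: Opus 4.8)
The plan is to translate ``big'' into a combinatorial condition on the rays of a cone and then to exploit that a Fano variety is complete. I would work with the projection $\pr\colon\QQ^{r+s}\to\QQ^r$ onto the first $r$ coordinates; its kernel is the lineality space $\lambda=\{0\}\times\QQ^s$ and it satisfies $\relint(\lambda_i)=\pr^{-1}(\QQ_{\ge0}e_i\setminus\{0\})$, $\pr(v_{ij})=l_{ij}e_i$ and $\pr(v_k)=0$. Hence for every $\sigma\in\Sigma$ one has $\pr(\sigma)=\cone(e_i;\ \sigma\text{ carries a }T\text{-column from block }i)$, and $\sigma$ meets $\relint(\lambda_i)$ if and only if $e_i\in\pr(\sigma)$. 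Using the relation $e_0=-e_1-\ldots-e_r$, a proper subset of $\{e_0,\ldots,e_r\}$ spans a proper subcone of $\QQ^r$, while the full set spans $\QQ^r$. I would conclude that $\sigma$ is big, i.e.\ meets $\relint(\lambda_i)$ for all $i$, precisely when it carries a $T$-column ray from each of the $r+1$ blocks $0,\ldots,r$.

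Next I would argue by contradiction, assuming that $\Sigma$ contains no big cone. Since $X$ is Fano it is complete, so $\trop(X)\subseteq|\Sigma|$; as $\lambda\subseteq\lambda_i\subseteq\trop(X)$, the space $\lambda$ is covered by $\Sigma$. Now take any $p\in\lambda$ and let $\tau\in\Sigma$ be the unique cone with $p\in\relint(\tau)$. Writing $p$ as a strictly positive combination of the ray generators of $\tau$ and applying $\pr$, the vanishing $\pr(p)=0$ becomes $\sum_i C_ie_i=0$, where $C_i=\sum_{j\,:\,v_{ij}\in\tau}c_{ij}l_{ij}\ge0$ is strictly positive exactly for the blocks contributing a $T$-column to $\tau$. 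The relation $e_0=-e_1-\ldots-e_r$ forces all nonzero $C_i$ to be equal and present in every block; thus either $\tau$ carries $T$-columns from all blocks (making it big) or from none (so $\tau\subseteq\lambda$). The first alternative is excluded by assumption, so $\tau\subseteq\lambda$.

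Consequently the subfan $\{\tau\in\Sigma;\ \tau\subseteq\lambda\}$ covers $\lambda\cong\QQ^s$ and is therefore a complete fan in an $s$-dimensional space. Its rays are one-dimensional cones of $\Sigma$ lying in $\lambda$, hence among the columns $v_k$, which are the only generators of $P$ inside $\lambda$. A complete fan in $\QQ^s$ needs at least $s+1$ rays, so $m\ge s+1=\dim(X)$, contradicting $m<\dim(X)$. This yields a big cone. The step I expect to be the main obstacle is the completeness input $\trop(X)\subseteq|\Sigma|$ together with the observation that the unique cone of $\Sigma$ through a point of $\lambda$ cannot be big and hence must lie inside $\lambda$; once these are in place, the two short computations with $e_0=-e_1-\ldots-e_r$ close the argument, and note that only completeness of $X$, not the full log terminal hypothesis, is used.
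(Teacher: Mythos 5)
Your proof is correct, and it takes a genuinely different route from the one in the paper. The paper stays inside the anticanonical-complex machinery: by the vertex description from~\cite{ABHW}, the lineality part $A^c_{X,0}$ has exactly $m+b$ vertices, where $b$ is the number of $P$-elementary cones; log terminality makes $A^c_X$ bounded (Theorem~\ref{thm:BHHNmain}), so $A^c_{X,0}$ is an $s$-dimensional \emph{polytope} and hence has at least $s+1$ vertices, giving $m+b\ge s+1=\dim(X)$ and thus $b\ge1$ whenever $m<\dim(X)$. You instead argue purely on the fan: after identifying big cones as those carrying a $T$-column from every block (your projection argument for this is sound), you show that in the absence of big cones every cone of $\Sigma$ whose relative interior meets $\lambda$ must lie in $\lambda$, so the subfan supported in $\lambda$ covers $\lambda\cong\QQ^s$; its rays, whose primitive generators are among the $m$ columns $v_k$, then positively span $\QQ^s$, forcing $m\ge s+1=\dim(X)$. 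Your external input $\trop(X)\subseteq\vert\Sigma\vert$ is legitimate: it is Tevelev's completeness criterion applied to the projective variety $X$, and it can alternatively be read off from Theorem~\ref{thm:BHHNmain}~(i), since $\vert\Sigma\vert$ and $\trop(X)$ are invariant under positive scaling, so a neighborhood of the origin in $\trop(X)$ lying in $\vert\Sigma\vert$ already forces the inclusion. What your route buys: it is elementary, it dispenses with the log terminal hypothesis (completeness suffices), and it yields a big cone that genuinely lies in $\Sigma$ --- whereas the paper's count detects $P$-elementary cones, which a priori need not be cones of $\Sigma$, and rests on a vertex description stated under an additional $\QQ$-factoriality assumption. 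What the paper's route buys: brevity, given machinery already in place, and the slightly sharper numerical statement $m+b\ge s+1$, which records how many $P$-elementary cones must exist and feeds directly into the subsequent use of an elementary big cone in Lemma~\ref{lem:lterm2bound}.
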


\begin{proof}
The lineality part $A_{X,0}^c$ has precisely $m+b$ vertices,
where $b$ is the number of $P$-elementary cones.
Moreover it is full-dimensional in the lineality space $\lambda$
of $\trop(X)$, meaning that it has dimension $s$.
This is only possible with at least $s+1$ vertices,
hence the assertion follows.
\end{proof}

\begin{lemma}
\label{lem:lterm2bound}
Let $X=X(A,P)$ be $\QQ$-factorial, log terminal and Fano.
If $m<\dim(X)$ holds, then we have
$$ 
\frac{n+m}{2}
\ \le \ 
\dim(X) + \delta(X).
$$
\end{lemma}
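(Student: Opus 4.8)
The plan is to turn the claimed inequality into a statement about the number $r$ of blocks of $P$, and then to bound $r$ by exploiting the platonic structure that log terminality forces on a big cone. First I would record from Construction~\ref{constr:RAPFano} the two identities $\dim(X)=s+1$ and $\delta(X)=n+m-(r+s)$. Adding them gives $\dim(X)+\delta(X)=n+m-r+1$, so the target inequality $\tfrac{n+m}{2}\le\dim(X)+\delta(X)$ is equivalent to $n+m\ge 2(r-1)$. Since $m\ge0$, it suffices to prove the sharper estimate $n\ge 2r-1$, i.e.\ that the number of $T_{ij}$-variables is at least $2r-1$.

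Next I would produce an elementary big cone inside $\Sigma$. By hypothesis $m<\dim(X)$, so Lemma~\ref{lem:exbigcone} provides a big cone $\tau\in\Sigma$. As $X$ is $\QQ$-factorial, $\tau$ is simplicial. Moreover $\tau$ must use at least one ray from every block: for each $i$ the condition $\tau\cap\relint(\lambda_i)\neq\emptyset$ forces, after projecting to the first $r$ coordinates, a strictly positive $e_i$-entry, which only the block-$i$ columns $v_{ij}$ can supply. Selecting one such ray from each block $i=0,\ldots,r$ singles out a subset of the generators of $\tau$, hence (by simpliciality) a face $\sigma=\varrho_0+\ldots+\varrho_r\preceq\tau$. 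This $\sigma$ is a $P$-elementary cone lying in $\Sigma$, and since $X$ is log terminal, Remark~\ref{rem:logterm2lbound} applies: after sorting the $l_{\varrho_i}$ decreasingly, only $l_{\varrho_0},l_{\varrho_1},l_{\varrho_2}$ may exceed $1$.

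Finally I would link this to the block sizes via irredundancy. Assuming $P$ irredundant (Remark~\ref{rem:ci}), every block with $n_i=1$ satisfies $l_{i1}\ge2$, so its unique ray, forcibly chosen in $\sigma$, has $l_\varrho\ge2$. As the rays $\varrho_0,\ldots,\varrho_r$ come from pairwise distinct blocks and at most three of the $l_{\varrho_i}$ exceed $1$, there are at most three blocks with $n_i=1$. Writing $p\le3$ for their number, the remaining $r+1-p$ blocks have $n_i\ge2$, whence
$$
n \ = \ \sum_{i=0}^{r}n_i \ \ge \ p+2(r+1-p) \ = \ 2r+2-p \ \ge \ 2r-1,
$$
which is exactly the estimate needed. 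The one point requiring care—and the real reason the hypothesis $m<\dim(X)$ enters—is the extraction of the elementary big cone: Remark~\ref{rem:logterm2lbound} only constrains $P$-elementary cones that genuinely belong to $\Sigma$, so it is precisely Lemma~\ref{lem:exbigcone}, available under $m<\dim(X)$, that guarantees such a cone exists and makes the platonic bound applicable.
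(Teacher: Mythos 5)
Your proof is correct, and it reaches the paper's conclusion by a route that is structurally parallel but replaces the paper's external citation with a self-contained argument. The paper's own proof is three lines long: Lemma~\ref{lem:exbigcone} gives an elementary big cone, then \cite[Corollary~4.8]{BeHaHuNi} is quoted for the bound $r-1\le\dim(X)+\delta(X)$, and the identity $n+m=\dim(X)+\delta(X)+r-1$ finishes; note that the cited bound is, via this identity, exactly equivalent to the inequality $n+m\ge 2(r-1)$ that you reduce to. What you do differently is to reprove the content of that corollary inside the paper's toolkit: you pass from the big cone of Lemma~\ref{lem:exbigcone} to an elementary big cone $\sigma\in\Sigma$ (using that $\QQ$-factoriality makes the cones of $\Sigma$ simplicial, a standard fact the paper uses silently when it upgrades ``big'' to ``elementary big''), apply the platonic-triple Remark~\ref{rem:logterm2lbound}, and combine it with irredundancy to see that at most three blocks can have $n_i=1$, whence $n\ge 2r-1$. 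Two comments. First, your explicit appeal to irredundancy is not optional: the statement is actually false for redundant $P$, since appending a block with $n_i=1$, $l_{i1}=1$ raises both $n$ and $r$ by one, so iterating eventually violates $n+m\ge 2(r-1)$; irredundancy is thus an implicit standing hypothesis, hidden in the paper inside the citation and in Remark~\ref{rem:ci}, and making it visible is a genuine point of care. Second, when you invoke Remark~\ref{rem:logterm2lbound} you should add one sentence checking that your face $\sigma$ really ``defines a singularity'' of $X$, i.e.\ that its toric orbit meets $X$: setting to zero exactly the chosen coordinate $z_{ij_i}$ in each block kills every monomial $T_i^{l_i}$ and hence every relation $g_I$, so the face of $\gamma$ corresponding to $\sigma$ is an $\mathfrak{F}$-face, and since $X$ is log terminal everywhere the remark applies to the points of the associated stratum. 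With that sentence added, your argument is complete; it buys independence from \cite{BeHaHuNi} at the price of being longer than the paper's citation-based proof.
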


\begin{proof}
Lemma~\ref{lem:exbigcone} ensures
the existence of an elementary big cone.
Therefore we can apply~\cite[Corollary~4.8]{BeHaHuNi}
and obtain that $r-1$, the number of relations,
equals at most $\dim(X) + \delta(X)$.
The assertion follows from
$$
n+ m 
\ = \ 
\dim(X) + \delta(X) + r - 1
\ \le \ 
2(\dim(X)+\delta(X)).
$$ 
\end{proof}

\begin{lemma}
\label{lem:degineff}
For any $X=X(A,P)$, the degree $\mu$ of the relations $g_{I}$ of $R(A,P)$
lies in the effective cone $\Eff(X) \subseteq \Cl(X)_{\QQ}$.
Moreover $\mu$ lies in its interior
if and only if $0 \notin \cone(v_{k_1},\ldots,v_{k_t})^\circ$
holds for any collection $1\le k_1 < \ldots < k_t \le m$.
\end{lemma}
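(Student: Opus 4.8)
\textit{Plan.} The strategy is to read both assertions off the fibre of $Q$ over $\mu$, and to isolate the part of that fibre governed by the $S$-columns of $P$. Recall from Construction~\ref{constr:RAPdown} that the common degree of the relations is $\mu = Q(v(i))$ for every $i$, where $v(i) := \sum_{j=1}^{n_i} l_{ij}\,e_{ij}$ is the exponent vector of the monomial $T_i^{l_i}$. Since every $l_{ij}\ge 1$, each $v(i)$ lies in the positive orthant $\gamma\subseteq\QQ^{n+m}$, so Remark~\ref{rem:divcones} gives $\mu = Q(v(i))\in Q(\gamma) = \Eff(X)$; this settles the first assertion. For the interior statement I would work with the averaged representative $v^\star := (r+1)^{-1}\sum_{i=0}^{r} v(i)$. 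It satisfies $Q(v^\star)=\mu$, lies in $\gamma$, is strictly positive in every $T$-coordinate, and vanishes in every $S$-coordinate; this last feature is exactly what makes it the right choice.

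Next I would use that $\Eff(X)=\cone(Q(e_1),\ldots,Q(e_{n+m}))$ is full-dimensional (the weights span $K_\QQ$), so that its interior is the set of strictly positive combinations of the generators, i.e.\ $\operatorname{int}(\Eff(X)) = Q(\relint(\gamma))$. Since $\ker Q = \operatorname{im}(P^*)$, it follows that $\mu\in\operatorname{int}(\Eff(X))$ holds if and only if the fibre $Q^{-1}(\mu)=v^\star+\operatorname{im}(P^*)$ meets the open orthant $\QQ^{n+m}_{>0}$. Writing a general fibre element as $v^\star + P^*(y',z)$ with $(y',z)\in\QQ^r\times\QQ^s$ and using the block form $P=\left[\begin{smallmatrix} L & 0 \\ d & d'\end{smallmatrix}\right]$, its $S$-coordinates are exactly $(d')^*z$, while its $T$-coordinates are $v^\star_T + L^*y' + d^*z$. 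This reduces the interior condition to the existence of a $z$ with $(d')^*z$ componentwise positive.

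The equivalence in that reduction goes as follows. The forward implication is immediate: a positive fibre point restricts on its $S$-block to a vector of the form $(d')^*z>0$. For the converse I would take $y'=0$ and $z=\varepsilon z_0$ for a witness $z_0$ of $(d')^*z_0>0$ and $\varepsilon>0$ small, so that the $S$-coordinates $\varepsilon(d')^*z_0$ stay positive while the $T$-coordinates $v^\star_T+\varepsilon d^*z_0$ remain positive because $v^\star$ is bounded away from $0$ on the entire $T$-block. Finally, the $S$-columns have the form $v_k=(0,d'_k)^T$, so $(d')^*z>0$ componentwise means $\langle d'_k,z\rangle>0$ for all $k$. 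By Gordan's theorem of the alternative, such a $z$ exists if and only if no nontrivial nonnegative relation $\sum_k c_k d'_k=0$ exists, equivalently no subcollection admits a relation $\sum_j c_j v_{k_j}=0$ with all $c_j>0$. As the relative interior of a finitely generated cone is its set of strictly positive combinations of generators, this is precisely the condition $0\notin\cone(v_{k_1},\ldots,v_{k_t})^\circ$ for every collection $k_1<\cdots<k_t$, which closes the equivalence.

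\textit{Main obstacle.} The only step requiring genuine care is the converse direction of the reduction, where a \emph{single} fibre point must be made positive in all coordinates at once. This is exactly why one should average to $v^\star$ rather than work with an individual $v(i)$: because $v^\star$ is strictly positive on the whole $T$-block, a sufficiently small perturbation by $\varepsilon z_0$ can fix the $S$-coordinates without spoiling positivity of the $T$-coordinates, whereas a single $v(i)$ vanishes on the other $T$-blocks and would not survive the perturbation.
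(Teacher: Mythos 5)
Your proof is correct, and it takes a genuinely different route from the paper's. The paper argues entirely on the weight side in $\Cl(X)_\QQ$: after dismissing the first assertion as trivial, it observes that $\mu$ fails to be interior exactly when it lies on a facet $\tau$ of $\Eff(X)$, that this happens if and only if \emph{all} weights $w_{ij}$ lie on $\tau$ (because $\mu$ is a strictly positive combination of the weights of each block), and then invokes the Invariant Separation Lemma~\ref{lemma:ISL} to translate ``all $w_{ij}$ lie on a common proper face'' into the Gale-dual condition $0\in\cone(v_{k_1},\ldots,v_{k_t})^\circ$ for some collection of $S$-columns. You instead work on the fibre side in $\QQ^{n+m}$: you identify $\operatorname{int}(\Eff(X))$ with $Q(\relint(\gamma))$, choose the averaged representative $v^\star$ of $\mu$ (strictly positive on the $T$-block, zero on the $S$-block), use the block structure of $P^*$ to reduce interiority to feasibility of $(d')^*z>0$, and close with Gordan's theorem of the alternative. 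The two arguments rest on the same underlying duality --- Gordan's alternative is essentially the linear-programming fact packaged inside the Invariant Separation Lemma --- but the packaging differs and each has its merits: the paper's proof is three lines because the ISL machinery is already set up, while yours is self-contained modulo standard convexity facts and makes structurally transparent \emph{why} only the $S$-columns enter the criterion (the strict $T$-positivity of $v^\star$ absorbs any small perturbation, which is exactly the point you flag as the main obstacle). Two small things you should still say explicitly: $\Eff(X)$ is full-dimensional so that $\operatorname{int}=\relint$ applies, and the columns $v_{k_j}$ are nonzero (they are primitive), which is needed for the identification of $\cone(v_{k_1},\ldots,v_{k_t})^\circ$ with the set of strictly positive combinations; both are immediate from the construction.
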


\begin{proof}
The first assertion is trivial.
The degree $\mu$ lies on a facet $\tau$ of $\Eff(X)$
if and only if all weights $w_{ij}$ lie on $\tau$ as well.
By the Invariant Separation Lemma,
this is precisely the case, when there exist $v_{k_1},\ldots,v_{k_t}$
such that $0 \in \cone(v_{k_1},\ldots,v_{k_t})^\circ$.
\end{proof}

\begin{lemma}
\label{lem:picbound3}
Let $X = X(A,P)$ be a log terminal 
Fano variety, such that $\mu\in\Eff(X)^\circ$ holds.
If $\alpha = 0$, $\zeta \le r-2$ and $m < \dim(X)$ hold, 
then we have
$$
\delta(X)
\ \le \ 
\frac{2}{r-1-\zeta}
\dim(X)  - \frac{m+\zeta}{r-1 - \zeta}.
$$
\end{lemma}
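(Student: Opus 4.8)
The plan is to convert the asserted inequality into a statement about the number $\eta$ of extremal variables of type $T_{ij}$ and to pin $\eta$ down from two sides. Set $c := r-1-\zeta$, so that $c\ge 1$ by the hypothesis $\zeta\le r-2$; clearing this positive denominator, the assertion is equivalent to $\delta(X)\,c+m+\zeta\le 2\dim(X)$. The dimension formula $n+m=\dim(X)+\delta(X)+(r-1)$ from Construction~\ref{constr:RAPFano}, together with $n=\eta+\zeta$ and $r-1=c+\zeta$, yields the identity
\[
\eta \ = \ \dim(X)+\delta(X)+c-m .
\]
Via this identity the target reduces to two inputs that I will establish separately: a lower bound $\eta\ge(c+2)\,\delta(X)$ coming from the interior position of $\mu$, and the relation count $r-1\le\dim(X)+\delta(X)$ coming from the big cone.

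The main step is the covering bound $\eta\ge(r+1-\zeta)\,\delta(X)=(c+2)\,\delta(X)$. Since $\alpha=0$, the cone $\Eff(X)$ is simplicial with exactly $\delta(X)$ extremal rays; let $u_1,\dots,u_{\delta(X)}$ be the dual basis of functionals, so that a weight is extremal precisely when it is positive on a single $u_t$, and $w\in\Eff(X)^\circ$ iff $u_t(w)>0$ for all $t$. For each block $i$ one has $\mu=\sum_j l_{ij}w_{ij}$ with all $l_{ij}>0$, and $\mu\in\Eff(X)^\circ$ forces $u_t(\mu)>0$; hence for every $t$ at least one weight $w_{ij}$ of block $i$ satisfies $u_t(w_{ij})>0$. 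Counting the incidences $\#\{(i,j,t);\ u_t(w_{ij})>0\}$ gives at least $(r+1)\delta(X)$ from below, since each of the $r+1$ blocks meets each of the $\delta(X)$ rays, and at most $\eta+\zeta\,\delta(X)$ from above, since an extremal weight contributes a single $t$ while a non-extremal one contributes at most $\delta(X)$. Comparing the two estimates yields $\eta\ge(r+1-\zeta)\,\delta(X)$, which via the identity above is exactly $(c+1)\,\delta(X)\le\dim(X)+c-m$. The delicate point here is the bookkeeping of the incidences; note this step uses only $\alpha=0$ and $\mu\in\Eff(X)^\circ$, not combinatorial minimality.

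To bound the relations, I use that $m<\dim(X)$ supplies a big cone in $\Sigma$ by Lemma~\ref{lem:exbigcone}, hence a $P$-elementary cone $\sigma\in\Sigma$. Such a $\sigma$ uses exactly one ray from each block, and for every block consisting of a single variable irredundancy (Remark~\ref{rem:ci}) forces the corresponding $l$-value to be at least $2$. By the platonic classification of Remark~\ref{rem:logterm2lbound}, at most three rays of $\sigma$ can carry $l$-value bigger than one, so at most three blocks are singletons. Plugging this into the block-size estimate $n\ge 2(r+1)-(\text{number of singleton blocks})\ge 2(r+1)-3$ and using the dimension formula gives $r-1\le\dim(X)+\delta(X)$, the same bound extracted in Lemma~\ref{lem:lterm2bound} from an elementary big cone.

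Finally I combine the two inputs. From $(c+1)\,\delta(X)\le\dim(X)+c-m$ I obtain $\delta(X)\,c=(c+1)\,\delta(X)-\delta(X)\le\dim(X)+c-m-\delta(X)$, whence, using $c+\zeta=r-1$,
\[
\delta(X)\,c+m+\zeta \ \le \ \dim(X)+(r-1)-\delta(X).
\]
The relation bound $r-1\le\dim(X)+\delta(X)$ turns the right-hand side into $\dim(X)+(\dim(X)+\delta(X))-\delta(X)=2\dim(X)$, so $\delta(X)\,c+m+\zeta\le 2\dim(X)$, and dividing by $c\ge 1$ gives the claimed estimate.
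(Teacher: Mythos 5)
Your proof is correct and follows essentially the same route as the paper: the paper likewise combines the lower bound $\eta \ge (r+1-\zeta)\,\delta(X)$ --- forced by $\alpha=0$ and $\mu\in\Eff(X)^\circ$, which make every monomial built from extremal variables involve weights on all $\delta(X)$ rays of the simplicial cone $\Eff(X)$ --- with the estimate $n+m\le 2(\dim(X)+\delta(X))$ of Lemma~\ref{lem:lterm2bound} and the dimension formula. Your two execution variants --- obtaining the lower bound by incidence counting against dual functionals rather than by counting the at least $r+1-\zeta$ fully extremal monomials directly, and re-deriving the relation bound $r-1\le\dim(X)+\delta(X)$ from Remark~\ref{rem:logterm2lbound} together with irredundancy instead of citing Lemma~\ref{lem:lterm2bound} (i.e.\ \cite[Corollary~4.8]{BeHaHuNi}) --- are both valid and rest on the same implicit steps the paper itself uses (irredundant $P$, and passing from the big cone of Lemma~\ref{lem:exbigcone} to a $P$-elementary cone of $\Sigma$).
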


\begin{proof}
In the relations of $R(A,P)$, consider the monomials 
$T_{i}^{l_{i}}$ consisting only of extremal variables $T_{ij}$.
Since $\mu\in\Eff(X)^\circ$ and $\alpha=0$ hold,
each such monomial comprises at least $\delta(X)$ variables.
There are at least $r+1 - \zeta$ such monomials.
We obtain
$$ 
n
\ = \ 
\eta + \zeta
\ \ge \ 
(r-1 - \zeta ) \delta(X)
+ 2 \delta(X)
+ \zeta.
$$
Since $X$ is log terminal with $m < \dim(X)$, 
we can apply Lemma~\ref{lem:lterm2bound}
and see that $2 \dim(X) + 2 \delta(X)$ is bigger 
or equal to $n+m$. 
Combining the two estimates gives the assertion.
\end{proof}

For the case $\alpha = 1$, we use geometrical properties
of $d$-dimensional polyhedral cones with $d+1$ extremal rays.
Here we gather and prove the relevant facts.

\begin{lemma}
\label{lem:cone-d+1}
Let $d\ge3$ and $\sigma \subseteq \QQ^d$ be a pointed convex 
polyhedral $d$-dimensional cone with $d+1$ extremal
rays.
Let $v_1, \ldots, v_{d+1} \in \QQ^d$ be primitive generators 
of the extremal rays of $\sigma$ and $w_1,\ldots, w_{d+1} \in \QQ$
the Gale dual configuration.
Set $D \ := \ \{1, \ldots, d+1\}$ and
$$
D_{-} \ := \ \{i \in D; \; w_{i} < 0\},
\quad
D_{0} \ := \ \{i \in D; \; w_{i} = 0\},
\quad
D_{+} \ := \ \{i \in D; \; w_{i} > 0\}.
$$
Moreover, for any subset $I \subseteq D$, denote by 
$I^{c} \subseteq D$ its complement and define cones 
$\sigma_I := \cone(v_i; \;  i \in I) \subseteq \QQ^d$ and 
$\tau_I := \cone(w_i; \; i \in I^{c}) \subseteq \QQ$.
Then the following statements hold:
\begin{enumerate}[label=(\roman*)]
\item
$\sigma_I$ is a proper face of $\sigma$ if and only 
if $\tau_I = \{0\}$ or $\tau_I = \QQ$ holds.
\item
There are at least two $i$ with  $w_i > 0$ and at least 
two $j$ with $w_j < 0$.
\item
We have $\sigma_I^\circ \subseteq \sigma^\circ$ if and only if 
$D_{-} \cup D_{0} \subseteq I$ or $D_{+} \cup D_{0} \subseteq I$ 
holds.
\item
We have $\sigma_I^\circ \cap \sigma_{J}^\circ \ne \emptyset$ if and only if 
$D_{-} \subseteq I$, $D_{+} \subseteq J$, $I \cap D_{0} = J \cap D_{0}$
or  
$D_{+} \subseteq I$, $D_{-} \subseteq J$, $I \cap D_{0} = J \cap D_{0}$
holds.
\end{enumerate}
In particular, $\sigma_{-} := \cone(v_{i}; \, i \in D_{-} \cup D_{0})$
and  $\sigma_{+} := \cone(v_{i}; \, i \in D_{+} \cup D_{0})$ form
the unique pair of minimal cones satisfying 
$\sigma_\pm^\circ \subseteq \sigma^\circ$,
$\cone(\sigma_-,\sigma_+)=\sigma$ and 
$\sigma_-^\circ \cap \sigma_+^\circ \ne \emptyset$.
\end{lemma}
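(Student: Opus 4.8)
The plan is to reduce the entire statement to the single linear relation among the $v_i$ and to sign bookkeeping for the Gale dual. Since $v_1,\ldots,v_{d+1}$ positively span the full-dimensional pointed cone $\sigma$, they span $\QQ^d$, so the space of relations $\{\lambda\in\QQ^{d+1};\ \sum_i\lambda_i v_i=0\}$ is one-dimensional; by definition of the Gale dual it is generated by $(w_1,\ldots,w_{d+1})$. Thus $\sum_i w_i v_i=0$ is, up to scale, the only relation, and $D_-,D_0,D_+$ record its signs. I would first dispatch (ii): pointedness yields a functional $c$ with $c(v_i)>0$ for all $i$, and pairing it with the relation gives $\sum_i w_i\,c(v_i)=0$, forcing at least one positive and one negative weight. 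If there were exactly one positive weight, say $w_1>0$, the relation would express $w_1 v_1$ as a nonnegative combination of the remaining $v_i$, i.e. $v_1\in\cone(v_i;\ i\neq 1)$, contradicting that $v_1$ spans an extremal ray; hence there are at least two positive and, symmetrically, at least two negative weights.

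For (i) I would argue through supporting functionals. The cone $\sigma_I$ is a proper face precisely when there is a $c$ with $c(v_i)=0$ for $i\in I$ and $c(v_j)>0$ for $j\in I^{c}$; equivalently the vector $(c(v_1),\ldots,c(v_{d+1}))$ lies in the relative interior of the face $\cone(e_j;\ j\in I^{c})$ of the positive orthant in $\QQ^{d+1}$. As this vector ranges over $\operatorname{im}(P^*)=\ker(Q)$ with $Q(e_j)=w_j$, such a $c$ exists iff there are positive coefficients $x_j$ $(j\in I^{c})$ with $\sum_{j\in I^{c}}x_j w_j=0$, i.e. iff $0\in\relint(\tau_I)$. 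A one-dimensional cone contains the origin in its relative interior exactly when it equals $\{0\}$ or all of $\QQ$, which is (i) (the case $I=D$, giving $\sigma$ itself, is the trivial exception); alternatively, one could invoke the Invariant Separation Lemma~\ref{lemma:ISL} on the dual configuration here. From (i) I would read off the facets by a dimension count: writing $S:=D_+\cup D_-$, a face $\sigma_I$ has dimension $d-1$ iff either $S\subseteq I$ with $I^{c}=\{j\}$, $j\in D_0$ (type A), or $\{v_i;\ i\in I\}$ is independent with $I^{c}=\{p,q\}$, $p\in D_+$, $q\in D_-$ (type B, using (i) to see $\tau_I=\QQ$). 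Statement (iii) then follows, since $\sigma_I^\circ\subseteq\sigma^\circ$ holds iff $\sigma_I$ lies in no facet: avoiding every type-A facet means $D_0\subseteq I$, avoiding every type-B facet means $D_+\subseteq I$ or $D_-\subseteq I$, and together these give $D_0\cup D_+\subseteq I$ or $D_0\cup D_-\subseteq I$.

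For (iv) I would write $\sigma_I^\circ\cap\sigma_J^\circ\neq\emptyset$ as solvability of $\sum_{i\in I}a_i v_i=\sum_{j\in J}b_j v_j$ with all $a_i,b_j>0$; the difference of the two sides is a relation, hence a multiple $t\cdot(w_1,\ldots,w_{d+1})$ of the generating relation. A sign analysis of $t$ gives the two symmetric families. For $t<0$ one obtains $I\setminus J\subseteq D_-$, $J\setminus I\subseteq D_+$ and $S\subseteq I\cup J$, which I would check is equivalent to $D_-\subseteq I$, $D_+\subseteq J$, $I\cap D_0=J\cap D_0$; the case $t>0$ yields the mirror condition. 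The degenerate possibility $t=0$ forces $I=J$ and is the trivial self-intersection, so (iv) is to be read for $\sigma_I\neq\sigma_J$.

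Finally, for the concluding statement I would combine (iii) and (iv). By (iii) every index set whose cone has relative interior inside $\sigma^\circ$ contains $D_0$ together with all of $D_+$ or all of $D_-$, so the inclusion-minimal such sets are exactly $D_0\cup D_+$ and $D_0\cup D_-$, giving $\sigma_+$ and $\sigma_-$. Their generators jointly exhaust $D$, so $\cone(\sigma_-,\sigma_+)=\sigma$, and (iv) applied with $I=D_0\cup D_-$, $J=D_0\cup D_+$ (option one: $D_-\subseteq I$, $D_+\subseteq J$, $I\cap D_0=D_0=J\cap D_0$) yields $\sigma_-^\circ\cap\sigma_+^\circ\neq\emptyset$. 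For uniqueness, given any valid pair, after possibly swapping the two cones (iv) forces $D_-\subseteq I$ and $D_+\subseteq J$ while (iii) forces $D_0\subseteq I$ and $D_0\subseteq J$, so $I\supseteq D_0\cup D_-$ and $J\supseteq D_0\cup D_+$, with equality exactly for the minimal pair. I expect the main obstacle to be the bookkeeping in (iv): converting the multiple-of-$w$ condition, with its unconstrained overlap $I\cap J$ and its $D_0$-part, into the stated set-theoretic conditions, and then feeding this cleanly into the minimality and uniqueness argument.
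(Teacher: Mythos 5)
Your proposal is correct, and it reaches the lemma by a genuinely different, more elementary route than the paper. The paper's proof is a double application of the Invariant Separation Lemma~\ref{lemma:ISL}: setting $P(e_i)=v_i$ and $Q(e_i)=w_i$, face relations in $\sigma$ translate into statements about relative interiors of the one-dimensional cones $\tau_I$, so that (i) and (iv) are direct instances of this duality, (ii) follows from (i) applied to the faces $\{0\}$ and $\cone(v_i)$, and (iii) is read off as the special case $J=D$ of (iv). You instead work by hand with the unique (up to scalar) relation $\sum_i w_i v_i=0$: your (ii) uses pointedness plus extremality of the rays rather than (i); your (i) re-derives the needed separation statement via supporting functionals and $\operatorname{im}(P^*)=\ker(Q)$ (in effect an inline proof of the relevant instance of the Invariant Separation Lemma, as you note); your (iii) comes from (i) through an explicit classification of the facets into your types A and B, rather than as a corollary of (iv); and your (iv) is a direct sign analysis of the scalar $t$ in $\sum_{i\in I}a_iv_i-\sum_{j\in J}b_jv_j=t\,(w_1,\ldots,w_{d+1})$. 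The set-theoretic translation you flag as the main hurdle does check out ($I\setminus J\subseteq D_-$, $J\setminus I\subseteq D_+$, $D_+\cup D_-\subseteq I\cup J$ is equivalent to the stated conditions), and the converse implication in (iv) is settled by taking $t=-1$ and solving, e.g.\ $a_i=-w_i$ on $I\setminus J$, $b_j=w_j$ on $J\setminus I$, with free positive choices on $I\cap J$. What your version buys: it is self-contained, and it makes explicit two degenerate cases that the lemma's wording and the paper's proof pass over silently, namely $I=D$ in (i), where $\tau_D=\{0\}$ yet $\sigma_D=\sigma$ is not a proper face, and $I=J$ in (iv), where the intersection is trivially nonempty although the stated conditions can fail (take the cone over a square and $I=J$ indexing a facet). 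What the paper's version buys is brevity and transparency of the duality: one quoted lemma does all the separation work, and (iii) is free from (iv).
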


\begin{proof}
Let $P \colon \QQ^{d+1} \to \QQ^d$ be the linear
map sending $e_i$ to $v_i$ and $Q \colon \QQ^{d+1} \to \QQ$
the one sending $e_i$ to $w_i$.
For $I$ consider 
$\delta_I := \cone(e_i; \; i \in I) \subseteq \QQ^{d+1}$ 
and 
$\gamma_I := \cone(e_i; \;  i \in I^{c}) \subseteq \QQ^{d+1}$.
The Invariant Separation Lemma~\ref{lemma:ISL}
yields for any two $I,J \subseteq \{1,\ldots,d+1\}$ 
the following statements:
\begin{itemize}
\item
There is a $\ker(P)$-invariant separating linear 
form for $\delta_I$ and $\delta_J$
if and only if $\tau_I^\circ \cap \tau_J^\circ \ne \emptyset$ 
holds,
\item
We have $\sigma_I^\circ \cap \sigma_J^\circ \ne \emptyset$ if 
and only if there is a $\ker(Q)$-invariant 
separating linear form for $\gamma_I$ and $\gamma_J$.
\end{itemize}
Observe that $\sigma_{I}, \sigma_{J}$ intersect in a
common face if and only if $\delta_{I}, \delta_{J}$
admit a $\ker(P)$-invariant separating linear form.

Now, assertion~$(i)$ is an immediate consequence of the 
first of the above two items.
Since $\{0\}$ is a face of $\sigma$, we see that 
there must be positive and negative $w_i$.
Assertion~$(ii)$ reflects the fact that every
ray $\sigma_{\{i\}}$ is a face of $\sigma$.
Assertion~$(iii)$ is a special case of~$(iv)$ which in
turn is obtained by adapting the second of the above
items to the setting of the Lemma.
\end{proof}

We are ready to estimate $\delta(X)$ for the 
case $\alpha = 1$ and small $\zeta$.
Again, this statement does not assume combinatorial 
minimality.

\begin{lemma}
\label{lem:picbound4}
Let $X = X(A,P)$ be a log terminal 
Fano variety, such that $\mu\in\Eff(X)^\circ$ holds.
Assume $\alpha = 1$ and $\zeta \le r-2$.
Then we have
$$ 
\delta(X) 
\ \le \ 
\dim(X) + 3 + \zeta - r - m
\ \le \
\dim(X) + 1 - m.
$$
\end{lemma}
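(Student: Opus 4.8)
The plan is to turn the asserted bound into a purely combinatorial lower estimate on the number $n$ of variables of type $T_{ij}$ and then read it off from the dimension identity $\dim(X)=n+m+1-r-\delta(X)$ already used in the proof of Lemma~\ref{lem:picbound1}. Substituting this identity into $\delta(X)\le\dim(X)+3+\zeta-r-m$, the contributions of $m$ cancel and the inequality becomes equivalent to
$$
n \ \ge \ 2\delta(X)+2r-4-\zeta .
$$
The second inequality $\dim(X)+3+\zeta-r-m\le\dim(X)+1-m$ is nothing but a restatement of the standing hypothesis $\zeta\le r-2$, so it requires no further work. Everything thus reduces to forcing enough extremal variables of type $T_{ij}$.

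To count these, I would apply Lemma~\ref{lem:cone-d+1} to the cone $\sigma:=\Eff(X)\subseteq\Cl_\QQ(X)$, which by Remark~\ref{rem:divcones} equals $Q(\gamma)$ and, because $\alpha=1$, is full-dimensional with exactly $\delta(X)+1$ extremal rays; hence $d=\delta(X)$. This produces the sign partition $D_-,D_0,D_+$ of the extremal rays and the two minimal cones $\sigma_-,\sigma_+$ whose relative interiors meet $\sigma^\circ$, and part~(ii) guarantees $|D_-|,|D_+|\ge 2$. The bridge to the ring is that every relation monomial $T_i^{l_i}$ has degree $\mu\in\Eff(X)^\circ$, and for a block consisting only of extremal variables these weights lie on extremal rays of $\sigma$; by part~(iv), applied with one of the two cones equal to $\sigma$ itself, the set of rays carrying such a monomial must contain $D_-\cup D_0$ or $D_+\cup D_0$. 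Since $\mu$ is a single class, lying in the relative interior of just one of the sides, all all-extremal monomials are forced to cover that same side, say the one with $c:=|D_\varepsilon|+|D_0|$ rays. Each of the at least $r+1-\zeta$ all-extremal blocks therefore carries at least $c$ extremal variables $T_{ij}$.

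The genuinely delicate step, and the one I expect to be the main obstacle, is converting this per-block information into the sharp count $n\ge 2\delta(X)+2r-4-\zeta$. Summing $c$ over the $r+1-\zeta$ all-extremal blocks is by itself too weak when the covered side is small, so one must simultaneously exploit that every one of the $\delta(X)+1$ extremal rays hosts at least one weight and that, by part~(ii), at least two rays lie on each of $D_-$ and $D_+$; the rays of the uncovered side must be paid for by additional extremal variables. The worst case — a block covering only two rays while the opposite side has size $\delta(X)-1$ — is precisely where a naive estimate falls short, and ruling it out is where I expect to have to feed in log terminality, namely the boundedness of the anticanonical complex together with the platonic-triple restriction of Remark~\ref{rem:logterm2lbound}, to exclude the degenerate low-dimensional coverings. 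Once this balancing between $\sigma_-$ and $\sigma_+$ secures the variable count, inserting it into the dimension identity closes the argument.
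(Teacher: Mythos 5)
Your reduction of the first inequality to the count $n \ge 2\delta(X)+2r-4-\zeta$ is correct (and the second inequality is indeed just $\zeta\le r-2$), and so is your per-monomial step: for an all-extremal monomial $T_i^{l_i}$ the degree $\mu$ lies in the relative interior of its weight cone $\omega_i$, so $\mu\in\omega_i^\circ\cap\Eff(X)^\circ$, and Lemma~\ref{lem:cone-d+1}~(iv), applied with one cone equal to $\Eff(X)$ itself, forces $\sigma_-\subseteq\omega_i$ or $\sigma_+\subseteq\omega_i$. This is exactly how the paper's proof begins. The genuine gap is your next claim, that all all-extremal monomials must cover the \emph{same} side. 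The justification you give --- that $\mu$ ``lies in the relative interior of just one of the sides'' --- contradicts the final assertion of Lemma~\ref{lem:cone-d+1} itself, namely $\sigma_-^\circ\cap\sigma_+^\circ\neq\emptyset$: the common degree $\mu$ can lie in both relative interiors at once (and it can also lie in neither; the type of a monomial is about $\omega_i$ \emph{containing} $\sigma_\pm$, not about where $\mu$ sits relative to $\sigma_\pm$). Concretely, take extremal rays through $e_1,e_2,e_3,e_1+e_2-e_3$ in $\QQ^3$, so $\delta(X)=3$, $\alpha=1$, $D_+=\{1,2\}$, $D_-=\{3,4\}$, and $\mu=e_1+e_2=e_3+(e_1+e_2-e_3)$: a relation of the form $T_1T_2+T_3T_4+\ldots$ has one monomial covering $\sigma_+$ and another covering $\sigma_-$. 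So the dichotomy you rely on does not hold, and, as you yourself note, the one-sided count $(r+1-\zeta)\cdot c$ that survives is too weak.

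Your proposed repair --- invoking log terminality and the platonic triples of Remark~\ref{rem:logterm2lbound} to exclude the bad configurations --- is not carried out, and it is also not the right tool: that remark constrains the exponents $l_{ij}$ along elementary cones, not the disposition of weights, and the paper's proof of this lemma uses no singularity input beyond the stated hypotheses. The paper instead embraces mixed types rather than excluding them. Writing $b_-$ and $b_+$ for the numbers of all-extremal monomials of type $(-)$ and $(+)$, one has $b_-+b_+\ge r+1-\zeta$ and $\eta\ge b_-a_-+b_+a_+$, where $a_\pm\ge2$ are the ray counts of $\sigma_\pm$. The key point you are missing is that the two sides jointly see every extremal ray, $a_-+a_+\ge\delta(X)+1$, so the paper estimates
$$
b_-a_-+b_+a_+ \ \ge\ 2(a_-+a_+)+(b_--2)a_-+(b_+-2)a_+ \ \ge\ 2(\delta(X)+1)+2\bigl(b_-+b_+-4\bigr),
$$
which gives $n=\eta+\zeta\ \ge\ 2\delta(X)+2r-4-\zeta$, exactly your target. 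In other words, monomials of different types are a help, not an obstruction: a type-$(-)$ monomial pays $a_-$ extremal variables, a type-$(+)$ monomial pays $a_+$, and together they pay for all $\delta(X)+1$ rays; your strategy of forcing everything onto one side is what creates the shortfall you could not close.
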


\begin{proof}
Let $\omega_-,\omega_+ \subseteq \Eff(X)$ be the 
minimal pair of subcones as in Lemma~\ref{lem:cone-d+1}
and denote by $a_-,a_+$ their respective numbers
of extremal rays. Then we have
$$ 
a_- \ \ge \ 2, \qquad 
a_+ \ \ge \ 2, \qquad 
a_- + a_+ \ \ge \ \delta(X) + 1.
$$
Consider a monomial $T_i^{l_i}$ with only extremal 
variables $T_{ij}$ and let $\omega_{i}$ be the cone 
generated  by the $\deg(T_{ij})$. 
We say that $T_{i}^{l_{i}}$ is of type $(-)$ if 
$\omega_- \subseteq \omega_{i}$ holds and 
of type $(+)$ otherwise. 
Lemma~\ref{lem:cone-d+1}, with $\mu\in\Eff(X)^\circ$,
shows that any $T_{i}^{l_{i}}$ of type $(+)$
satisfies $\omega_+ \subseteq \omega_{i}$.
Let $b_-$ and $b_+$ denote the respective numbers of 
monomials of these types that occur in the $(r-1)$ relations.
Then we have 
$$
b_- + b_+  \ \ge \   r+1 - \zeta,
\qquad 
\eta \ \ge \ b_- a_- + b_+ a_+.
$$ 
For the first estimate, we use that there are at 
least $r+1-\zeta$ monomials
involving only extremal variables.
For the second one, note that every monomial of type
$(\pm)$ has at least $a_{\pm}$ distinct variables.  
We conclude
\begin{eqnarray*}
\dim(X) 
+
\delta(X)
+
r-1
& = &
\eta + \zeta + m
\\ 
& \ge &
b_- a_- + b_+ a_+ + \zeta + m
\\
& \ge &
2(\delta(X)+1) + (b_- -2) a_- + (b_+ -2) a_++ \zeta + m
\\
& \ge &
2(\delta(X)+1)
+ 
2 (r + 1 - \zeta -4) 
+ 
\zeta
+
m.
\end{eqnarray*}
\end{proof}


\section{Combinatorially minimal $3$-folds}
\label{sec:combmin-3folds}

In this Section, we take a closer look at combinatorially 
minimal Fano threefolds with a two-torus action. 
The case with a divisor class group of rank one was settled in~\cite{BeHaHuNi}.
Here the first goal is to find a bound on that rank and then
establish an analogue of~\cite[Lemma~5.2]{BeHaHuNi},
using among other things the estimates from the previous section.
We start by allowing log terminal singularities.

\begin{proposition}[a.k.a.~Theorem~1.1]
\label{prop:picbound}
Let $X = X(A,P)$ be a 3-dimensional, 
log terminal, combinatorially minimal Fano variety.
Then $\delta(X) \le 3$ holds.
\end{proposition}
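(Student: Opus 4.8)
The plan is to feed the numerical estimates of Section~\ref{sec:combmin-rho} into the single constraint $\dim(X)=3$ and to sharpen them wherever the black-box bounds are too weak. The identity I will lean on repeatedly is
$$
\delta(X)\ =\ n+m-r-2,
$$
which holds because $s=\dim(X)-1=2$ and the columns of $P$ generate $\QQ^{r+s}$ as a cone, so $P$ has full rank $r+2$ and $K=\ZZ^{n+m}/\operatorname{im}(P^*)$ has rank $n+m-(r+2)$. Note that $\QQ$-factoriality is \emph{not} among the hypotheses, so Lemma~\ref{lem:lterm2bound} and Lemma~\ref{lem:picbound3} are not directly available and the corresponding counting step must be done by hand. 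First I would invoke Lemma~\ref{lem:picbound1}: its first alternative gives $\dim(X)\ge\delta(X)$, hence $\delta(X)\le3$ and we are done. So I may assume its second alternative, $m<2\delta(X)-2$ together with $3=\dim(X)\ge\alpha+2+m/2$. The inequality $\alpha+m/2\le1$ leaves only $\alpha\in\{0,1\}$, with $m=0$ if $\alpha=1$ and $m\le2$ if $\alpha=0$; in particular $m<\dim(X)$ throughout.

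For $\alpha=1$ (hence $m=0$) I would split on $\zeta$. Since there are no variables $S_k$, Lemma~\ref{lem:degineff} yields $\mu\in\Eff(X)^\circ$, so Lemma~\ref{lem:picbound4} applies whenever $\zeta\le r-2$. In that range Lemma~\ref{lem:picbound2} gives $\delta(X)\le r-\zeta$ and Lemma~\ref{lem:picbound4} gives $\delta(X)\le 6+\zeta-r$; adding the two yields $2\delta(X)\le6$, i.e. $\delta(X)\le3$. When instead $\zeta\ge r-1$, Lemma~\ref{lem:picbound2} alone already gives $\delta(X)\le r-\zeta\le1$.

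For $\alpha=0$ the case $\zeta\ge r-1$ is again immediate from Lemma~\ref{lem:picbound2}, which reads $\delta(X)\le r+2-\zeta\le3$. The interesting regime is $\zeta\le r-2$, where I would mimic the monomial count behind Lemma~\ref{lem:picbound3} but retain the exact identity above in place of the lossy estimate coming from Lemma~\ref{lem:lterm2bound}. Assuming for now $\mu\in\Eff(X)^\circ$: the $\zeta$ non-extremal variables occupy at most $\zeta$ of the $r+1$ blocks, so at least $r+1-\zeta$ of the monomials $T_i^{l_i}$ involve only extremal variables; since $\Eff(X)$ is simplicial with $\delta(X)$ rays and $\mu$ is interior, each such monomial must meet every ray and hence uses at least $\delta(X)$ distinct variables. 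Thus $\eta\ge(r+1-\zeta)\delta(X)$, and combining this with $n=\eta+\zeta$ and $n=\delta(X)+r+2-m$ gives
$$
(r-\zeta)\bigl(\delta(X)-1\bigr)\ \le\ 2-m\ \le\ 2.
$$
As $\zeta\le r-2$ forces $r-\zeta\ge2$, this yields $\delta(X)\le2$.

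The hard part will be the residual configuration in which $\mu\notin\Eff(X)^\circ$: by Lemma~\ref{lem:degineff} and $m\le2$ this can occur only for $\alpha=0$, $m=2$ with the two $S$-columns antiparallel, so that $\mu$ lies on a facet $\tau$ of the simplicial cone $\Eff(X)$ and all $T$-weights lie on $\tau$. I would dispose of it by repeating the count inside the face of $\tau$ supporting $\mu$ (of dimension at most $\delta(X)-1$) and using that combinatorial minimality forces the one extremal ray off $\tau$ to host both variables $S_1,S_2$, which pins down the remaining freedom and again bounds $\delta(X)$ by $3$. This degenerate case, together with the need to replace the $\QQ$-factorial estimate of Lemma~\ref{lem:lterm2bound} by the exact rank identity in the $\alpha=0$ count, is where the argument is genuinely delicate; the remaining combinations of the four estimates are routine arithmetic.
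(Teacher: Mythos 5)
Your proposal is correct, and its skeleton is the paper's own: Lemma~\ref{lem:picbound1} reduces to the two constellations $\alpha=0,\ m\le2$ and $\alpha=1,\ m=0$; Lemma~\ref{lem:picbound2} disposes of large $\zeta$; a monomial count disposes of small $\zeta$; and Lemma~\ref{lem:degineff} isolates the case $\mu\notin\Eff(X)^\circ$, which is killed by the same count run inside the facet carrying the $T$-weights. Your sketch of that last case --- both $S$-weights forced by combinatorial minimality onto the single ray off the facet, each all-extremal monomial meeting every ray of the facet --- is precisely the paper's argument, which yields $(r-\zeta)(\delta(X)-2)\le\dim(X)-m=1$ and completes routinely within your framework (indeed with $\zeta\le r-2$ it even gives $\delta(X)\le2$). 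The genuine difference lies in the small-$\zeta$ count: where the paper quotes Lemma~\ref{lem:picbound3}, you rerun the count against the exact rank identity $n+m=\delta(X)+r+2$ and obtain $(r-\zeta)\bigl(\delta(X)-1\bigr)\le2-m$. This buys two things. First, it is sharper: it gives $\delta(X)\le2$ for all $\zeta\le r-2$, so the paper's separate exclusion of $\delta(X)=4$ (the $\zeta=r-2$ analysis ending in the contradiction $n\ge12>10$) becomes unnecessary. Second, and more importantly, it bypasses Lemma~\ref{lem:picbound3} entirely; as you observe, the proof of that lemma rests on the $\QQ$-factorial Lemma~\ref{lem:lterm2bound}, while neither its statement nor the Proposition assumes $\QQ$-factoriality, so replacing the lossy estimate $n+m\le2(\dim(X)+\delta(X))$ by the exact identity actually repairs a weak point in the paper's lemma chain. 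Your $\alpha=1$ treatment (summing Lemmas~\ref{lem:picbound2} and~\ref{lem:picbound4} to get $2\delta(X)\le6$) is an equivalent repackaging of the paper's split at $\zeta\ge r-2$ versus $\zeta\le r-3$.
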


\begin{proof}
Lemma~\ref{lem:picbound1} tells us that besides 
$\delta(X) \le 3$, we have to consider the 
following two cases:
$$ 
\alpha \ = \ 0 \text{ and } m \ \le \ 2,
\qquad\qquad
\alpha \ = \ 1 \text{ and } m \ = \ 0. 
$$

Firstly assume that $\mu\in\Eff(X)^\circ$ holds.
If $\alpha = 0$ holds, then Lemma~\ref{lem:picbound2}
with $\zeta \ge r-2$ gives $\delta(X) \le 4$ and 
Lemma~\ref{lem:picbound3} with $\zeta \le r-3$ gives
$\delta(X) \le 3$.
If $\alpha = 1$ holds, Lemma~\ref{lem:picbound2}
with $\zeta \ge r-2$ gives $\delta(X) \le 2$ and 
Lemma~\ref{lem:picbound4} with $\zeta \le r-3$ gives
$\delta(X) \le 3$.
So we have to exclude $\delta(X) = 4$,
which only appears in the case $\alpha=0$ with $\zeta \ge r-2$.
Lemma~\ref{lem:picbound2} yields in this case 
$\zeta \le r-2$, thus we have $\zeta = r-2$.
In particular we have a relation involving 
only extremal variables, say the one
with the monomials $T_0^{l_0}, T_1^{l_1}, T_2^{l_2}$.
Together with $\mu\in\Eff(X)^\circ$ and $\alpha=0$
this implies $n_0, n_1, n_2 \ge 4$. 
On the other hand, Lemma~\ref{lem:picbound3} 
gives us $m + \zeta \le 2$ and thus $r \le 4$.
This shows $n + m = 7 + r-1 \le 10$, a contradiction to $n\ge 12$.

Now assume that the degree $\mu$
lies on a facet of the effective cone.
According to Lemma~\ref{lem:degineff}
we have $m\ge2$, therefore $\alpha=0$ and $m=2$.
In particular the weights $w_{ij}$ generate a
$(\delta(X)-1)$-dimensional facet $\theta$ of $\Eff(X)$.
For $r\le\zeta$, Lemma~\ref{lem:picbound2} yields $\delta(X)\le2$
so we can assume $r\ge\zeta+1$.
Since there are at least $r+1-\zeta$ monomials
having only extremal variables and
each of these monomials must have at least $\delta(X)-1$ variables
we conclude
$$
\dim(X) + \delta(X) + r-1 \ = \
\eta + \zeta + m \ \ge \
(r+1-\zeta)(\delta(X)-1) + \zeta + m
$$
and thus $(r-\zeta)(\delta(X)-2)\le \dim(X)-m = 1$.
It follows $\delta(X)\le 3$.
\end{proof}

Next we would like to show that the degree $\mu$
lies in the interior of the effective cone.
By allowing $X=X(A,P)$ to have log terminal singularities, this does not hold.
Consider any log terminal Fano
$\KK^*$-surface $S$ with $\delta(S)=1$,
then $S\times \PP_1$ is a $\QQ$-factorial log terminal Fano
variety of complexity one with Picard number $2$,
such that the degree $\mu$ lies on an extremal ray of its effective cone.
By restricting to the terminal case
we can prove that $\mu\in\Eff(X)^\circ$ holds.

\begin{proposition}
\label{prop:combmin-effint}
Let $X = X(A,P)$ be a non-toric, 3-dimensional, combinatorially minimal,
terminal Fano variety, where the defining matrix $P$ is irredundant.
Then $\mu\in\Eff(X)^\circ$ holds.
\end{proposition}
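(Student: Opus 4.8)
The goal is to show that for a non-toric, 3-dimensional, combinatorially minimal, terminal Fano variety $X=X(A,P)$ with irredundant $P$, the relation degree $\mu$ lies in the interior of $\Eff(X)$. By Lemma~\ref{lem:degineff}, $\mu$ fails to be interior precisely when there exist columns $v_{k_1},\ldots,v_{k_t}$ among the last $m$ columns of $P$ with $0\in\cone(v_{k_1},\ldots,v_{k_t})^\circ$. So the plan is to argue by contradiction: assume such a collection of $S$-type columns exists and derive a violation either of terminality (via Theorem~\ref{thm:BHHNmain}(iii)) or of combinatorial minimality (via Proposition~\ref{prop:combmin-char}).

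\medskip

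First I would record the structural constraints. By Proposition~\ref{prop:picbound} we have $\delta(X)\le 3$, and since $X$ is non-toric the number of relations is $r-1\ge 1$, so $r\ge 2$. The condition $0\in\cone(v_{k_1},\ldots,v_{k_t})^\circ$ means the toric-part columns $v_{k_i}$ (which lie in the lineality space $\lambda=\{0\}\times\QQ^s$, by the block shape of $P$) already positively span a linear subspace of $\lambda$ containing $0$ in the relative interior of their cone. I would first dispose of the degenerate possibilities for $t$ and the dimension $s+1=3$, i.e.\ $s=2$, so that $\lambda$ is a plane. The hard case is that these columns sit in $\lambda$ and produce a linear dependence with nonnegative coefficients.

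\medskip

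The key step, and the main obstacle, is to convert the analytic condition $0\in\cone(v_{k_i})^\circ$ into a geometric statement about the fan $\Sigma$ and the anticanonical complex that contradicts terminality. I would look at the $P$-elementary cones and the vertices $v_\sigma'$ of the lineality part $A_{X,0}^c$ described in the construction preceding Remark~\ref{rem:logterm2lbound}. When the $S$-columns wrap around the origin in $\lambda$, the lineality complex $A_{X,0}^c$ tends to acquire an interior lattice point other than $0$: concretely, I expect that some convex combination of two opposite $v_{k_i}$ (or of a $v_{k_i}$ with a vertex $v_\sigma'$) lands on a nonzero lattice point of $A_X^c\sqcap\lambda$, which is forbidden by Theorem~\ref{thm:BHHNmain}(iii). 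The irredundancy of $P$ (Remark~\ref{rem:ci}) and the fact that terminality forces, via Proposition~\ref{prop:triple1xy}, at most two $l_{\varrho_i}$ to exceed one should be exactly what pins down the coordinates tightly enough to locate such a lattice point. This lattice-point-finding argument in the plane $\lambda$ is where the real work lies, and it will likely split into a short case distinction according to $|t|$ and the mutual position of the $v_{k_i}$.

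\medskip

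Finally, in the residual configurations where no extra lattice point can be forced, I would argue that $\mu$ on a facet makes all weights $w_{ij}$ lie on that facet $\theta$ of $\Eff(X)$ (Lemma~\ref{lem:degineff}), and then combine this with combinatorial minimality in the form of Proposition~\ref{prop:combmin-char}: the columns $v_{k_i}$ whose removal leaves the cone $\QQ^{r+s}$ intact would contradict the requirement that no column is redundant for generating the cone, i.e.\ would exhibit an exceptional weight via Lemma~\ref{lem:excepweight}. In short, the assumption $\mu\notin\Eff(X)^\circ$ is squeezed from two sides — terminality on the lineality plane and combinatorial minimality on the column configuration — and I expect every branch to terminate in one of these two contradictions.
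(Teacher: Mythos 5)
Your opening is on track and matches the paper's: reduce to $\delta(X)\le 3$ via Proposition~\ref{prop:picbound}, and read Lemma~\ref{lem:degineff} as saying that $\mu\notin\Eff(X)^\circ$ forces free columns with $0\in\cone(v_{k_1},\ldots,v_{k_t})^\circ$, in particular $m\ge 2$. But beyond that point the proposal is an expectation rather than an argument, and both contradictions you plan to ``squeeze'' the cases with fail on the configurations that actually occur. The paper's backbone, which is missing from your sketch, is a finite enumeration: the relation $n+m=r+s+\delta(X)$, irredundancy, Lemma~\ref{lem:picbound1} and Proposition~\ref{prop:triple1xy} reduce everything to a few constellations $(m,r,\tuplen)$, such as $m=2$, $r=2$, $\tuplen=(2,1,1)$ or $m=3$, $\tuplen=(1,\ldots,1)$ for $\delta(X)=2$. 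In most of these, the weight dispositions force all $w_{ij}$ onto one extremal ray and the free weights onto the other(s), i.e.\ $X\cong\PP_1\times S$ with $S$ a del Pezzo $\KK^*$-surface. Such products \emph{are} combinatorially minimal and their lineality parts carry no forced extra lattice point --- the paper even exhibits log terminal examples of exactly this shape immediately before the proposition --- so neither terminality of the lineality part nor Proposition~\ref{prop:combmin-char} yields a contradiction there. These cases are killed only by combining non-toricity with terminality through the external classification result that there are no non-toric terminal del Pezzo $\KK^*$-surfaces, \cite[Prop.~5.10]{Hug}; nothing in your sketch supplies this input.

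The second failure is in the one genuinely non-product case ($\delta(X)=2$, $m=3$, $\tuplen=(1,\ldots,1)$, the third weight disposition). There the free columns can be normalized to $(0,\ldots,0,1,0)$, $(0,\ldots,0,0,1)$, $(0,\ldots,0,-1,-1)$, so $0$ indeed lies in the interior of their cone; yet the lineality part $A_{X,0}^c$ is exactly the triangle $\conv(e_1,e_2,-e_1-e_2)$ in $\lambda$, whose only lattice points are its vertices and the origin --- it is perfectly compatible with terminality, so the interior lattice point you ``expect'' does not exist. The paper's contradiction comes from a different place: Proposition~\ref{prop:triple1xy} plus irredundancy exclude elementary big cones, so $X$ is weakly tropical and $-\mathcal{K}_X$ must lie in $\cone(w_{01},w_3)$; terminality of each \emph{leaf} $\conv(A_{X,0}^c,v_{i1})$ forces $l_{i1}=3$ for all $i$ via the shadow Lemma~\ref{lem:P2-shadow}; and then an explicit computation of $-\mathcal{K}_X$ shows that, since $r\ge 2$, it cannot lie in $\cone(w_{01},w_3)$ after all. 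That is a contradiction with the Fano positional constraint, not with terminality of the lineality part and not with combinatorial minimality (which this configuration satisfies), so both of the terminating mechanisms you propose would leave this branch open.
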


\begin{definition}
\label{defi:shadow}
Consider a convex set $C\subset \{0\}\times \QQ^{d} \subset \QQ^{d+1}$
and a point $x\in\QQ^{d+1}$ with first coordinate $x_{1} > 0$.
The \emph{shadow}\index{shadow} of $C$ from $x$ is
$$
\shadow(C,x) \ := \ \{ y\in\QQ^{d+1} \ ; \ x\in\conv(y,C) \} \ \subseteq \ \QQ^{d+1}.
$$
Moreover, for any $t\in\QQ$ with $t\ge x_{1}$,
we define the \emph{sliced shadow}\index{shadow!sliced} at height $t$ as
$$
\shadow_t(C,x) \ := \ \{ y\in\shadow(C,x) \ ; \ y_{1}=t \}.
$$
\end{definition}

\begin{lemma}
\label{lem:P2-shadow}
Consider the triangle $C:=\conv(e_2,e_3,-e_2-e_3)\subset\QQ^3$.
Let $x\in\ZZ^3$ be a lattice point with first coordinate $x_1\ge2$.
If the only lattice points of $\conv(C,x)$
are its four vertices and the origin, then $x_1=3$ holds.
\end{lemma}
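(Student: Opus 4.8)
The plan is to view $Q := \conv(C,x)$ as a pyramid with triangular base $C$ in the hyperplane $\{y_1=0\}$ and apex $x$ at height $x_1\ge 2$, and to control its lattice points slice by slice. I first note that $C$ is the reflexive triangle whose rays are those of the fan of $\PP_2$, so its only lattice points are the three vertices $e_2,e_3,-e_2-e_3$ and its centroid $0$; the hypothesis thus says exactly that $Q$ contains no lattice point other than its four vertices and $0$. Since each shear $(y_1,y_2,y_3)\mapsto(y_1,y_2-cy_1,y_3-c'y_1)$ with $c,c'\in\ZZ$ fixes the base hyperplane pointwise — hence fixes $C$ and $0$ — and preserves $\ZZ^3$, I may normalize $0\le x_2,x_3\le x_1-1$ without changing $x_1$ or the hypothesis; the swap $(y_1,y_2,y_3)\mapsto(y_1,y_3,y_2)$ also preserves $C$, so I may argue up to exchanging $x_2$ and $x_3$.

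For an integer height $1\le h\le x_1-1$ the slice $Q\cap\{y_1=h\}$ is the triangle $\tfrac{x_1-h}{x_1}\overline{C}+\tfrac{h}{x_1}(x_2,x_3)$, where $\overline{C}=\conv((1,0),(0,1),(-1,-1))=\{(u,v)\colon u+v\le 1,\ 2u-v\ge -1,\ u-2v\le 1\}$. Consequently a lattice point $(h,a,b)$ lies in $Q$ if and only if $\bigl(x_1(a,b)-h(x_2,x_3)\bigr)/(x_1-h)\in\overline{C}$, which unwinds into three explicit linear inequalities. I will use this only at $h=1$ and for the four candidates $(1,0,0),(1,1,0),(1,0,1),(1,1,1)$, each of which is automatically distinct from the four vertices and from $0$.

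The argument then splits into cases; in each case but one I exhibit a forbidden lattice point, so under the hypothesis that case cannot occur. If $(1,0,0)\in Q$ we are done; this fails only when $2x_2-x_3\ge x_1$ or $2x_3-x_2\ge x_1$, and up to the swap I assume $2x_2-x_3\ge x_1$. If in addition $x_2\ge 2x_3+1$, the inequalities place $(1,1,0)$ in $Q$; if instead $x_2\le 2x_3$ and $x_2+x_3\ge x_1+1$, they place $(1,1,1)$ in $Q$. The single surviving case $2x_2-x_3\ge x_1$, $x_2\le 2x_3$, $x_2+x_3\le x_1$ forces, via a short chain of inequalities using $0\le x_2,x_3\le x_1-1$, the equalities $x_2=\tfrac{2x_1}{3}$ and $x_3=\tfrac{x_1}{3}$; hence $3\mid x_1$ and $x=\tfrac{x_1}{3}(3,2,1)$. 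If $x_1>3$ then $(3,2,1)=\tfrac{3}{x_1}x$ is a lattice point of $Q$ distinct from the four vertices and $0$, again contradicting the hypothesis. Therefore $x_1=3$.

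The main obstacle is precisely this last surviving case: the three height-$1$ candidates fail to catch configurations lying along the diagonal direction $(2,1)$, and one must extract from the combined inequalities the exact relation $x=\tfrac{x_1}{3}(3,2,1)$ and then produce the offending point $(3,2,1)$ on the segment from $0$ to $x$. The remaining work — checking that the inequality systems for $(1,0,0)$, $(1,1,0)$, $(1,1,1)$ genuinely cover all other configurations, and that each claimed membership follows from the normalization $0\le x_2,x_3\le x_1-1$ — is routine but is where the bookkeeping lives.
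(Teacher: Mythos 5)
Your proof is correct, and at its core it rests on the same two observations as the paper's: away from one exceptional direction the pyramid $\conv(C,x)$ must pick up a lattice point at height $1$, and along the exceptional direction (multiples of $(3,2,1)$, up to symmetry) the height-$3$ point itself rules out everything above height $3$. The execution, however, is genuinely different in its bookkeeping. The paper keeps $x$ arbitrary and argues dually via the shadow formalism of Definition~\ref{defi:shadow}: it asserts that every lattice point at height $2$, and every lattice point at height $t\ge 4$ outside the two families of multiples of $q_1=(3,3u+1,3v-1)$ and $q_2=(3,3u-1,3v+1)$, lies in the shadow of some height-$1$ lattice point $(1,u,v)$, with the remaining multiples caught by $\shadow(C,q_i)$ — covering claims that are stated for all translates $(u,v)$ at once and left essentially unverified in print. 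You instead normalize $x$ by lattice shears into the fundamental domain $0\le x_2,x_3\le x_1-1$ (plus the coordinate swap), which collapses the whole covering problem to membership of four explicit height-$1$ candidates, checked by the three slice inequalities; the unique surviving system $2x_2-x_3\ge x_1$, $x_2\le 2x_3$, $x_2+x_3\le x_1$ then forces $x=\tfrac{x_1}{3}(3,2,1)$, and the point $(3,2,1)$ finishes the job. What your route buys is a finite, fully explicit, and easily checkable case analysis that moreover treats all heights $x_1\ge 2$ uniformly (the case $x_1=2$ dies because $3\nmid 2$, rather than by a separate height-$2$ argument); what the paper's route buys is that no normalization is needed and the argument is phrased in terms of the shadow machinery it introduces for later use. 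The two are related by the trivial duality $y\in\conv(C,x)\Leftrightarrow x\in\shadow(C,y)$, so your proof can be read as a normalized, made-explicit version of the paper's covering claim — arguably a more complete verification of it.
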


\begin{proof}
Note that $\conv(C,x)$ always contains its four vertices and the origin.
A further lattice point $y$ lies in it if and only if $x\in\shadow(C,y)$ holds.
Therefore we look for a point $x$ that does not lie
in any $\shadow(C,y)$ for $x\neq y\in\ZZ^3$.

At height $t=2$ all integral points 
are of the form $(2,a,b)$ for some
$a\in\{2u,2u+1\}$ and $b\in\{2v,2v+1\}$,
and they lie in the respective $\shadow_2(C,(1,u,v))$.

For every $t\ge4$, the union of the shadows
$\shadow_t(C,z)$, for all $z\in\ZZ^3$ with $z_1=1$,
contains all integral points at the height $t$,
except for the multiples of $q_1:=(3,3u+1,3v-1)$ and $q_2:=(3,3u-1,3v+1)$.
Since $0\in C$ holds,
these points lie in $\shadow(C,q_1)$ and $\shadow(C,q_2)$, respectively.
Thus the assertion follows.
\end{proof}

Recall that a variety is called \emph{weakly tropical}\index{variety!weakly tropical}
if the fan of its minimal ambient toric variety $Z$ is supported on $\trop(X)$.
This means that there are only leaf cones.

\begin{proof}[Proof~of~Proposition~\ref{prop:combmin-effint}]
By Proposition~\ref{prop:picbound} we only have to consider $\delta(X)\le3$.
The assertion is clear for $\delta(X)=1$,
since $\Eff(X)=\QQ_{\ge0}$ and $\mu>0$ hold.

\smallbreak
Turn to the case $\delta(X)=2$.
Then we have $n+m=r+4$, which implies $m\le3$.
Suppose that $\mu\notin\Eff(X)^\circ$ holds.
Then Lemma~\ref{lem:degineff} yields $m\ge2$
and we have only two possible constellations:
\begin{enumerate}[label=(\alph*)]
\item
$m = 2$, $r = 2$ and $\tuplen=(2,1,1)$,
\item
$m = 3$, $r \ge 2$ and $\tuplen=(1, \ldots, 1)$.
\end{enumerate}
By combinatorial minimality, constellation (a) can only happen if
the weights of the two free variables lie
on one of the two extremal rays of $\Eff(X)$
and all the weights $w_{ij}$ lie on the other extremal ray.
This means that the variety is a product of $\PP_{1}$
with a del Pezzo surface.
By~\cite[Prop. 5.10]{Hug} there are no
non-toric terminal del Pezzo $\KK^*$-surfaces,
hence this case is not compatible with the assumptions.
Constellation (b) allows three weight dispositions:

\begin{center}
\begin{tikzpicture}[scale=0.25]
\node at (5,-2.5) {disp $1$};
\draw [thin] (0,0) -- (0,10);
\draw [thin] (0,0) -- (10,0);
\draw [fill] (2,0) circle [radius=0.15];
\node [below] at (2,0) {$w_{1}$};
\draw [fill] (5,0) circle [radius=0.15];
\node [below] at (5,0) {$w_{2}$};
\draw [fill] (8,0) circle [radius=0.15];
\node [below] at (8,0) {$w_{3}$};
\draw [fill] (0,3.5) circle [radius=0.15];
\node [left] at (0,3.5) {$w_{r1}$};
\node [left] at (-0.5,6) {$\vdots$};
\draw [fill] (0,7.5) circle [radius=0.15];
\node [left] at (0,7.5) {$w_{01}$};
\end{tikzpicture}
\
\begin{tikzpicture}[scale=0.25]
\node at (5,-2.5) {disp $2$};
\draw [thin] (0,0) -- (0,10);
\draw [thin] (0,0) -- (10,0);
\draw [fill] (3,0) circle [radius=0.15];
\node [below] at (3,0) {$w_{02}$};
\draw [fill] (7,0) circle [radius=0.15];
\node [below] at (7,0) {$w_{11}$};
\draw [fill] (0,1.5) circle [radius=0.15];
\node [left] at (0,1.5) {$w_{3}$};
\draw [fill] (0,3.5) circle [radius=0.15];
\node [left] at (0,3.5) {$w_{r1}$};
\node [left] at (-0.5,6) {$\vdots$};
\draw [fill] (0,7.5) circle [radius=0.15];
\node [left] at (0,7.5) {$w_{01}$};
\end{tikzpicture}
\
\begin{tikzpicture}[scale=0.25]
\node at (5,-2.5) {disp $3$};
\path[fill=gray!40!] (0,0)--(10,5)--(10,10)--(0,10)--(0,0);
\draw [thin] (0,0) -- (0,10);
\draw [thin] (0,0) -- (10,0);
\draw [gray, thin] (0,0) -- (10,5);
\draw [fill] (3,0) circle [radius=0.15];
\node [below] at (3,0) {$w_{1}$};
\draw [fill] (7,0) circle [radius=0.15];
\node [below] at (7,0) {$w_{2}$};
\draw [fill] (0,3.5) circle [radius=0.15];
\node [left] at (0,3.5) {$w_{r1}$};
\node [left] at (-0.5,6) {$\vdots$};
\draw [fill] (0,7.5) circle [radius=0.15];
\node [left] at (0,7.5) {$w_{01}$};
\draw [fill] (8,4) circle [radius=0.15];
\node [below right] at (8,4) {$w_{3}$};
\end{tikzpicture}
\end{center}

Note that, by almost freeness of the grading,
in all three cases one can assume $\Eff(X)=\QQ^2_{\ge0}$.
The first two dispositions correspond to products of varieties,
hence we rule them out just like before.
Consider disposition $3$.
By Proposition~\ref{prop:triple1xy} and irredundancy of $P$,
there cannot be elementary big cones, hence $X$ is weakly tropical.
Therefore $-\mathcal{K}_X$ lies in $\cone(w_{01},w_{3})$ (possibly on the boundary),
the cone colored in grey in the picture above.
Since $m=3$, we can assume, with admissible operations,
that the last three columns of $P$ are
$(0,\ldots,0,1,0)$, $(0,\ldots,0,0,1)$ and $(0,\ldots,0,-1,-1)$.
These three points are also the vertices of the lineality part $A_{X,0}^c$.
Consider now the $i$-th leaf $A_{X}^c\cap\lambda_i$ of the anticanonical complex,
given as the convex hull of $A_{X,0}^c$ and $v_{i1}$, since $n_i=1$.
Terminality implies that $A_{X}^c\cap\lambda_i$
does not contain additional integral points.
By Lemma~\ref{lem:P2-shadow} we follow $l_{i1}=3$ for all $i=0,\ldots,r$.
This yields $w_{i1}=(0,1)$ and hence $\mu=(0,3)$.
The torsion-free part of the anticanonical class in $K_\QQ$ is
$$
-\mathcal{K}_X \ = \
(r+1) \begin{pmatrix} 0 \\ 1 \end{pmatrix}
+ w_1 + w_2 + w_3 - 
(r-1) \begin{pmatrix} 0 \\ 3 \end{pmatrix} \ = \
\begin{pmatrix} w_1^1+w_2^1+w_3^1 \\ 4-2r+w_3^2 \end{pmatrix}.
$$
Since $r\ge2$, the anticanonical class does not lie
in $\cone(w_{01},w_3)$, a contradiction.

\smallbreak
Lastly consider $\delta(X)=3$.
Assume that $\mu\notin\Eff(X)^\circ$ holds.
Lemma~\ref{lem:degineff} yields $m\ge2$.
We also have the relation $n+m=r+5$, hence $m\le4$.
Note that $m=3$ is excluded by Lemma~\ref{lem:picbound1}.
Therefore we have three constellations:
\begin{enumerate}[label=(\alph*)]
\item
$m = 2$, $r = 2$ and $\tuplen=(2,2,1)$,
\item
$m = 2$, $r = 3$ and $\tuplen=(2,2,1,1)$,
\item
$m = 4$, $r \ge 2$ and $\tuplen=(1, \ldots, 1)$.
\end{enumerate}
We treat both constellations (a) and (b) at once.
By combinatorial minimality the effective cone $\Eff(X)$
is simplicial and the three extremal rays are
$$
\cone(w_{01})=\cone(w_{11}), \qquad
\cone(w_{02})=\cone(w_{12}), \qquad
\cone(w_{1})=\cone(w_{2}).
$$
We apply Remark~\ref{rem:posstrata}
to the relevant face $\gamma_{01,12,1,2}$
and achieve $\Eff(X)=\QQ^3_{\ge0}$.
Thus we are looking at a product of $\PP_1$ with a surface.
This is a contradiction, as already seen before.
In the constellation (c) every monomial consists of only one variable,
hence the respective weights all lie on the same extremal ray of $\Eff(X)$,
while the weights of the free variables
lie on the other two extremal rays, two each.
Since the grading is almost free,
we can assume that $\Eff(X)=\QQ^3_{\ge0}$ holds,
hence $X$ is product of three curves,
again a contradiction to complexity one.
\end{proof}

\begin{lemma}
\label{lem:terminal-combmin}
Let $X = X(A,P)$ be a non-toric, 3-dimensional, combinatorially minimal,
terminal Fano variety, where the matrix $P$ is irredundant
and $\delta(X) > 1$.
Then, after suitable admissible operations,
$P$ fits into one of the following cases:
\begin{enumerate}
\item
We have $\delta(X) = 3$ and 
one of the following constellations:
\begin{enumerate}
\item
$m = 0$, $r = 2$ and $n = 7$, where $\tuplen=(3,3,1)$.
\item
$m = 0$, $r = 3$ and $n = 8$, where $\tuplen=(3,3,1,1)$.
\item
$m = 0$, $r = 3$ and $n = 8$, where $\tuplen=(2,2,2,2)$.
\item
$m = 0$, $r = 4$ and $n = 9$, where $\tuplen=(2,2,2,2,1)$.
\item
$m = 0$, $r = 5$ and $n = 10$, where $\tuplen=(2,2,2,2,1,1)$.
\end{enumerate}
\item
We have $\delta(X) = 2$ and 
one of the following constellations:
\begin{enumerate}
\item
$m = 0$, $r = 2$ and $n = 6$, where $\tuplen=(2,2,2)$.
\item
$m = 0$, $r = 3$ and $n = 7$, where $\tuplen=(2,2,2,1)$.
\item
$m = 0$, $r = 4$ and $n = 8$, where $\tuplen=(2,2,2,1,1)$.
\item
$m = 0$, $r = 2$ and $n = 6$, where $\tuplen=(3,2,1)$.
\item
$m = 0$, $r = 3$ and $n = 7$, where $\tuplen=(3,2,1,1)$.
\item
$m = 0$, $r = 2$ and $n = 6$, where $\tuplen=(4,1,1)$.
\item
$m = 1$, $r = 2$ and $n = 5$, where $\tuplen=(2,2,1)$.
\item
$m = 1$, $r = 3$ and $n = 6$, where $\tuplen=(2,2,1,1)$.
\item
$m = 1$, $r = 2$ and $n = 5$, where $\tuplen=(3,1,1)$.
\item
$m = 2$, $r = 2$ and $n = 4$, where $\tuplen=(2,1,1)$.
\end{enumerate}
\end{enumerate}
\end{lemma}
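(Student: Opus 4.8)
\emph{Approach.} The plan is to combine the numerology forced by $\dim(X)=3$ with the constraints coming from terminality, irredundancy and combinatorial minimality, and then to run a finite case analysis. Since $\dim(X)=s+1=3$ we have $s=2$, and Construction~\ref{constr:RAPFano} gives the basic identity $n+m=\dim(X)+\delta(X)+r-1=r+2+\delta(X)$, which I rewrite as the \emph{excess formula}
\[
\sum_{i=0}^{r}(n_i-1) \ = \ \delta(X)+1-m.
\]
By Proposition~\ref{prop:picbound} we may assume $\delta(X)\in\{2,3\}$; by Proposition~\ref{prop:combmin-effint} we have $\mu\in\Eff(X)^\circ$; and since $X$ is non-toric there are genuine trinomial relations, forcing $r\ge 2$. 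These four facts are the backbone of the argument.

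\emph{Structural observations.} Next I would isolate three points. First, a \emph{singleton block} $n_i=1$ has $l_{i1}\ge 2$ by irredundancy, and its leaf monomial $T_{i1}^{l_{i1}}$ has degree $\mu$, so $w_{i1}=\mu/l_{i1}$ lies on the interior ray $\QQ_{\ge0}\mu$ and is therefore non-extremal. Second, there are at most two singleton blocks: three of them would yield three rays of $\Sigma$ all carrying multiplicity $\ge 2$, and a $P$-elementary cone of $\Sigma$ through them (such cones are available because $m<\dim(X)$ forces a big cone via Lemma~\ref{lem:exbigcone}) would violate the platonic-triple bound of Proposition~\ref{prop:triple1xy}. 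Third, since every leaf monomial has degree $\mu\in\Eff(X)^\circ$, a size-two block supplies two extremal weights only when its two weights sit on extremal rays whose span meets $\Eff(X)^\circ$; for $\delta(X)=2$ (where $\Eff(X)$ has exactly two rays, so $\alpha=0$) this is automatic, whereas for a simplicial $\Eff(X)$ in dimension three it forces at least one of the two weights to be non-extremal. Combinatorial minimality, via Remark~\ref{rem:combmin}, then reads: the number of weights on extremal rays is at least $2(\delta(X)+\alpha)$, and these can only come from the $S_k$ and from non-singleton $T$-blocks.

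\emph{Bounding the data and enumerating.} With these in hand I would first pin down $m$. The counting gives $m\le 2$ in general; for $\delta(X)=3$ the second alternative of Lemma~\ref{lem:picbound1} forces $\alpha=0$ as soon as $m\in\{1,2\}$, and then the third observation makes the required $\ge 6$ extremal weights unattainable, so $m=0$. For $\delta(X)=2$ one has $\alpha=0$ and $m\le 2$. It then remains to distribute the excess $\delta(X)+1-m$ among the blocks subject to at most two singletons, each non-singleton contributing its share of extremal weights, and the total reaching $2(\delta(X)+\alpha)$. For $\delta(X)=3$ this splits into the simplicial case $\alpha=0$ (two size-three blocks filling the three rays, giving $(3,3,1)$ and $(3,3,1,1)$) and the quadrilateral case $\alpha=1$ (four size-two blocks paired on opposite rays, giving $(2,2,2,2)$, $(2,2,2,2,1)$, $(2,2,2,2,1,1)$); for $\delta(X)=2$ the same bookkeeping produces the ten listed shapes. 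Throughout, product and toric degenerations are discarded exactly as in the proof of Proposition~\ref{prop:combmin-effint}.

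\emph{Main obstacle.} The hardest part I anticipate is this enumerative step: separating block-size multisets with the same numerology but different extremal-weight budgets (for instance $(3,2,2)$ versus $(3,3,1)$ when $\delta(X)=3$), which requires tracking precisely how many extremal weights each block can supply as a function of $\alpha$ and of whether $\Eff(X)$ is simplicial. A related delicate point is the justification, in the possibly non-$\QQ$-factorial setting, that the $P$-elementary cones to which Proposition~\ref{prop:triple1xy} is applied genuinely lie in $\Sigma$; here the distinction between $P$-elementary and elementary cones must be handled with care rather than assumed away.
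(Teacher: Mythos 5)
Your proposal is correct and follows essentially the same route as the paper's proof: both rest on Propositions~\ref{prop:picbound} and~\ref{prop:combmin-effint}, the exclusion of large $m$ via Lemma~\ref{lem:picbound1}, the observation that irredundancy combined with Proposition~\ref{prop:triple1xy} allows at most two singleton blocks, and a finite enumeration pruned by counting extremal weights against the $2(\delta(X)+\alpha)$ demanded by combinatorial minimality together with $\mu\in\Eff(X)^\circ$. The only minor differences are organizational: you get finiteness directly from your excess formula $\sum_i(n_i-1)=\delta(X)+1-m$ rather than from the bound $r\le 7$ of \cite[Corollary~4.8]{BeHaHuNi}, and the delicate point you flag (exhibiting a $P$-elementary cone of $\Sigma$ through the singleton rays in the possibly non-$\QQ$-factorial setting) is passed over equally quickly in the paper's own argument.
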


\begin{proof} 
Note that we have $\delta(X)\le3$ and $\mu\in\Eff(X)^\circ$,
by Propositions~\ref{prop:picbound}
and~\ref{prop:combmin-effint} respectively.

For the case $\delta(X)=3$ we have the relation
\begin{equation}
\label{eq:piczahl3}
n+m = r+5
\end{equation}
coming from $n+m=r+s+\delta(X)$.
Since $n\ge r+1$ always holds, we obtain $m\le 4$.
The case $m=4$ does not fit combinatorial minimality
in the following sense:
according to~\eqref{eq:piczahl3} we would have $n=r+1$,
meaning that every monomial consists only of one variable;
therefore all their weights must lie
in the interior of the effective cone
and consequently the extremal variables are maximal $4$,
contradicting the fact there must be
at least $2\delta(X)$ extremal variables.
Moreover, $m=3$ is excluded
by Lemma~\ref{lem:picbound1}.
Therefore we have $m<\dim(X)$ and there is always a big cone.
This implies $r\le 7$ by~\cite[Corollary~4.8]{BeHaHuNi}.
Combining Proposition~\ref{prop:triple1xy}
with the assumption that $P$ is irredundant we see
that at most two of the $n_i$ equal one.
This leaves us with a finite list of possible configurations.
Many of them can be discharged since
they do not fit combinatorial minimality.
Take for example $m=0$, $r=2$, so that $n=7$.
If we assume $n_0=5$, $n_1=n_2=1$,
then there are not enough extremal variables.
If we assume $n_0=4$, $n_1=2$, $n_2=1$,
there are exactly $6$ variables that may be extremal,
but in this case we would have $\alpha=0$,
which implies that any monomial
comprises at least $\delta(X)$ variables
in order to have combinatorial minimality,
a contradiction to $n_1=2$.
The cases listed in the assertion are
the ones that allow combinatorial minimality.
The case $\delta(X)=2$ is analogous.
\end{proof}


\section{The $\QQ$-factorial classification}
\label{sec:Qfact-classif}

The goal of this Section is the classification of $\QQ$-factorial
combinatorially minimal terminal Fano threefolds $X$ of complexity one.
In order to do that we have to go through the cases of Lemma~\ref{lem:terminal-combmin}
and restrict to $\QQ$-factoriality.
The latter means that the Picard number $\rho(X)$ coincides with the
rank $\delta(X)$ of the divisor class group $\Cl(X)$.
Since the complete proof is quiet long, we refer to~\cite[Section~3.3]{Nico},
where all cases are treated exhaustively.
Here we give a taste of the methods used there by considering the richest case.

\medskip
\noindent
\textbf{Case~$(a)$ of Lemma~\ref{lem:terminal-combmin}~(ii).}

We have $r=2$, $m= 0$, $n = 6$ and $\tuplen=(2,2,2)$.
Combinatorial minimality prescribes
at least two weights on each of the two extremal rays of $\Eff(X)$.
All six weights may be placed on these rays,
therefore we end up with five possible dispositions:
\begin{center}
\begin{tikzpicture}[scale=0.25]
\node at (5,-2.5) {disp $1$};
\draw [thin] (0,0) -- (2,10);
\draw [thin] (0,0) -- (10,0);
\draw [fill] (3,0) circle [radius=0.15];
\node [below] at (3,0) {$w_{01}$};
\draw [fill] (5.7,0) circle [radius=0.15];
\node [below] at (5.7,0) {$w_{11}$};
\draw [fill] (8.4,0) circle [radius=0.15];
\node [below] at (8.4,0) {$w_{21}$};
\draw [fill] (0.6,3) circle [radius=0.15];
\node [left] at (0.6,3) {$w_{02}$};
\draw [fill] (1,5) circle [radius=0.15];
\node [left] at (1,5) {$w_{12}$};
\draw [fill] (1.4,7) circle [radius=0.15];
\node [left] at (1.4,7) {$w_{22}$};
\end{tikzpicture}
\ \ \ \
\begin{tikzpicture}[scale=0.25]
\node at (5,-2.5) {disp $2$};
\path[fill=gray!25!] (0,0)--(6.66,10)--(2,10)--(0,0);
\draw [dashed] (0,0) -- (6.66,10);
\draw [thin] (0,0) -- (2,10);
\draw [thin] (0,0) -- (10,0);
\node at (2.6,6) {$\alpha$};
\draw [fill] (3,0) circle [radius=0.15];
\node [below] at (3,0) {$w_{01}$};
\draw [fill] (5.7,0) circle [radius=0.15];
\node [below] at (5.7,0) {$w_{11}$};
\draw [fill] (8.4,0) circle [radius=0.15];
\node [below] at (8.4,0) {$w_{21}$};
\draw [fill] (5,7.5) circle [radius=0.15];
\node [right] at (5,7.5) {$w_{02}$};
\draw [fill] (1,5) circle [radius=0.15];
\node [left] at (1,5) {$w_{12}$};
\draw [fill] (1.4,7) circle [radius=0.15];
\node [left] at (1.4,7) {$w_{22}$};
\end{tikzpicture}
\ \ \ \
\begin{tikzpicture}[scale=0.25]
\node at (5,-2.5) {disp $3$};
\path[fill=gray!25!] (0,0)--(6.66,10)--(2,10)--(0,0);
\path[fill=gray!35!] (0,0)--(6.66,10)--(10,10)--(10,2.5)--(0,0);
\draw [thin] (0,0) -- (2,10);
\draw [thin] (0,0) -- (10,0);
\draw [dashed] (0,0) -- (6.66,10);
\draw [dashed] (0,0) -- (10,2.5);
\node at (2.6,6) {$\alpha$};
\node at (4.4,3.2) {$\beta$};
\draw [fill] (5.7,0) circle [radius=0.15];
\node [below] at (5.7,0) {$w_{11}$};
\draw [fill] (8.4,0) circle [radius=0.15];
\node [below] at (8.4,0) {$w_{21}$};
\draw [fill] (5,7.5) circle [radius=0.15];
\node [right] at (5,7.5) {$w_{02}$};
\draw [fill] (1,5) circle [radius=0.15];
\node [left] at (1,5) {$w_{12}$};
\draw [fill] (1.4,7) circle [radius=0.15];
\node [left] at (1.4,7) {$w_{22}$};
\draw [fill] (8,2) circle [radius=0.15];
\node [above] at (8,2) {$w_{01}$};
\end{tikzpicture}
\end{center}

\begin{center}
\begin{tikzpicture}[scale=0.25]
\node at (5,-2.5) {disp $4$};
\draw [thin] (0,0) -- (2,10);
\draw [thin] (0,0) -- (10,0);
\draw [dashed] (0,0) -- (10,6);
\draw [fill] (3,0) circle [radius=0.15];
\node [below] at (3,0) {$w_{01}$};
\draw [fill] (5.7,0) circle [radius=0.15];
\node [below] at (5.7,0) {$w_{11}$};
\draw [fill] (0.6,3) circle [radius=0.15];
\node [left] at (0.6,3) {$w_{02}$};
\draw [fill] (1,5) circle [radius=0.15];
\node [left] at (1,5) {$w_{12}$};
\draw [fill] (6,3.6) circle [radius=0.15];
\node [left] at (6.2,4) {$w_{21}$};
\draw [fill] (8,4.8) circle [radius=0.15];
\node [left] at (8.2,5.2) {$w_{22}$};
\end{tikzpicture}
\ \ \ \
\begin{tikzpicture}[scale=0.25]
\node at (5,-2.5) {disp $5$};
\path[fill=gray!25!] (0,0)--(6.66,10)--(2,10)--(0,0);
\path[fill=gray!35!] (0,0)--(6.66,10)--(10,10)--(10,2.5)--(0,0);
\path[fill=gray!25!] (0,0)--(10,2.5)--(10,0)--(0,0);
\draw [thin] (0,0) -- (2,10);
\draw [thin] (0,0) -- (10,0);
\draw [dashed] (0,0) -- (6.66,10);
\draw [dashed] (0,0) -- (10,2.5);
\node at (2.6,6) {$\alpha_1$};
\node at (4.4,3.2) {$\alpha_2$};
\node at (9,1) {$\alpha_3$};
\draw [fill] (5.7,0) circle [radius=0.15];
\node [below] at (5.7,0) {$w_{11}$};
\draw [fill] (8.4,0) circle [radius=0.15];
\node [below] at (8.4,0) {$w_{21}$};
\draw [fill] (5,7.5) circle [radius=0.15];
\node [right] at (5,7.5) {$w_{12}$};
\draw [fill] (1,5) circle [radius=0.15];
\node [left] at (1,5) {$w_{02}$};
\draw [fill] (1.4,7) circle [radius=0.15];
\node [left] at (1.4,7) {$w_{22}$};
\draw [fill] (8,2) circle [radius=0.15];
\node [above] at (8,2) {$w_{01}$};
\end{tikzpicture}
\end{center}

In disposition~2 and~3 define
$\alpha:=\cone(w_{02},w_{12})^\circ$
and $\beta:=\cone(w_{01},w_{02})^\circ$.
In disposition~5 define
$\alpha_1:=\cone(w_{02},w_{12})^\circ$,
$\alpha_2:=\cone(w_{01},w_{12})^\circ$ and
$\alpha_3:=\cone(w_{01},w_{11})^\circ$.

With Proposition~\ref{prop:triple1xy} we obtain
a list of possible exponent configurations:
\begin{enumerate}[label=\Alph*]
\item
$l_0=(1,1)$;
\item
$l_2=(1,1)$;
\item
$l_{01}=l_{11}=l_{21}=1$;
\item
$l_{11}=l_{21}=1$;
\item
$l_{02}=l_{12}=1$;
\item
$l_{01}=l_{21}=1$.
\end{enumerate}
Due to terminality, every disposition allows
only a few of these configurations,
sometimes even just for restricted situations,
depending on the position of the anticanonical class.
The following table summarizes the totality of possible situations:
\begin{center}
\begin{tabular}{c|c|c|c|c|c|c}
 & config A & config B & config C & config D & config E & config F \\
 \hline
 disp $1$ & \checkmark & & \checkmark & & & \\
  \hline
 disp $2$ & \checkmark & \checkmark & \checkmark & \checkmark {\tiny $\alpha$} & & \\
  \hline
 disp $3$ & \checkmark & \checkmark & \checkmark {\tiny $\alpha$,$\beta$} & \checkmark {\tiny $\alpha$} & & \\
  \hline
 disp $4$ & \checkmark & \checkmark & & & \checkmark & \\
  \hline
 disp $5$ & \checkmark & \checkmark & \checkmark {\tiny $\alpha_1$,$\alpha_2$} & & & \checkmark {\tiny $\alpha_1$} \\
\end{tabular}
\end{center}
The combinations of dispositions and configurations that need to be studied
are marked with the sign \checkmark.
A subscript indicates that the anticanonical class $-\canK_X$
has to lie in the given cone(s).

This case provides the first six varieties
of the table of Theorem~\ref{thm:combmin-list},
namely No.~1 and~3 from situation~1A, No.~2 and~4 from~1C,
No.~5 from~2A and No.~6 from~4B.

\medbreak
\emph{Disposition}~1:
since all weights are located on the two extremal rays,
we can assume $\Eff(X)=\QQ^2_{\ge0}$.
For each $w_{ij}$, the two weights $w_{k\ell}$
such that $k\neq i$ and $\ell\neq j$
lie on the other extremal ray. The three together
form a relevant face, to which Remark~\ref{rem:posstrata} applies.
Hence the degree matrix assumes the form
$$
Q \ = \ 
\left[
\begin{array}{cc|cc|cc}
1 & 0 & 1 & 0 & 1 & 0
\\
0 & 1 & 0 & 1 & 0 & 1
\end{array}
\right].
$$

\medbreak
\emph{Situation}~1A: admissible operations,
together with equations from $P\cdot Q^T=0$, yield
$$
P \ = \ 
\left[
\begin{array}{rrrrrc}
-1 & -1 & 1 & 1 & 0 & 0
\\
-1 & -1 & 0 & 0 & 1 & 1
\\
0 & 1 & 0 & d_{112} & 0 & -d_{112}-1
\\
0 & 0 & 0 & d_{212} & 0 & -d_{212}
\end{array}
\right] ,
$$
where we can also assume $0\le d_{112}<d_{212}$.
Therefore we only need to bound $d_{212}$.
For this, take a look at the lineality part $A_{X,0}^c$.
Its vertices are
\begin{align*}
u_1 =& \ \frac{1}{2}( d_{112},d_{212} ),
& u_2 =& \ u_1 \ + \ \left(\frac{1}{2},0\right), \\
u_3 =& \ \frac{1}{2}( -1,0 ),
& u_4 =& \ u_3 \ + \ (1,0), \\
u_5 =& \ \frac{1}{2}( -d_{112}-1,-d_{212} ),
& u_6 =& \ u_5 \ + \ \left(\frac{1}{2},0\right).
\end{align*}
The value $d_{212}$ is odd, otherwise
one between $u_1$ and $u_2$ would be a lattice point,
contradicting terminality.
Since $A_{X,0}^c$ contains no integral point other than the origin,
there are only two possibilities for $(d_{112},d_{212})$, 
namely $(0,1)$ and $(1,3)$.
Both define valid varieties, respectively No.~1 and No.~3.

\medbreak
\emph{Situation}~1C:
here the anticanonical class is $-\mathcal{K}_X=(2,3-l_{02})$.
Since $X$ is a Fano variety and
$\Mov(X)=\QQ_{\ge0}^2$ holds, we have $l_{02}<3$.
From now on we assume $l_{02}=2$, because $l_{02}=1$
has been already discussed in situation~1A.
Admissible operations and $P\cdot Q^T=0$ yield
$$
P \ = \ 
\left[
\begin{array}{rrrrrc}
-1 & -2 & 1 & 2 & 0 & 0
\\
-1 & -2 & 0 & 0 & 1 & 2
\\
0 & 1 & 0 & d_{112} & 0 & -d_{112}-1
\\
0 & 0 & 0 & d_{212} & 0 & -d_{212}
\end{array}
\right] ,
$$
with $0\le d_{112}<d_{212}$.
In order to bound $d_{212}$,
take a look at the lineality part $A_{X,0}^c$.
Its vertices are
\begin{align*}
u_1 =& \ \frac{1}{3}( d_{112},d_{212} ),
& u_2 =& \ \frac{1}{2}( d_{112}+1,d_{212} ), \\
u_3 =& \ \left( -\frac{1}{2},0 \right),
& u_4 =& \ \left( \frac{1}{3},0 \right) \\
u_5 =& \ \frac{1}{3}( -d_{112}-1,-d_{212} ),
& u_6 =& \ \frac{1}{2}( -d_{112},-d_{212} ).
\end{align*}
Consider $C:=\conv(u_3,u_4,u_5)\subset A_{X,0}^c$.
The point $u_5$ lies under the bisection of the third orthant
and, because of the terminality of $X$,
$C$ does not contain integral points. We conclude $d_{212}<20$.
Using the MDSpackage~\cite{MDS} we see that
$(d_{112},d_{212})$ can assume the values $(0,1)$ and $(1,3)$.
These data correspond to varieties No.~2 and~4 respectively.

\medbreak
\emph{Disposition}~2:
we can apply Remark~\ref{rem:posstrata} to the relevant faces
$\gamma_{01,11,22}$, $\gamma_{01,12,21}$
and $\gamma_{01,12,22}$ and obtain
$$
Q \ = \ 
\left[
\begin{array}{cc|cc|cc}
1 & w_{02}^1 & w_{11}^1 & 0 & w_{21}^1 & 0
\\
0 & w_{02}^2 & 0 & 1 & 0 & 1
\end{array}
\right].
$$

\medbreak
\emph{Situation}~2A: homogeneity of the relation
delivers $1+w_{02}^1=l_{11}w_{11}^1=l_{21}w_{21}^1$
and $w_{02}^2=l_{12}=l_{22}$.
We show that the anticanonical class lies in $\cone(w_{11},w_{02})^\circ$.
If we suppose otherwise, then $\gamma_{02,12,22}$ is a relevant face
and in particular $w_{02}^1=1$.
This yields $l_{11},l_{21}\in\{1,2\}$,
but then the anticanonical class does not lie in the prescribed cone.
So $-\mathcal{K}_X\in\cone(w_{11},w_{02})^\circ$ holds,
the face $\gamma_{02,11,21}$ is relevant
and we conclude $l_{12}=1$.
Without loss of generality assume $l_{11}\le l_{21}$.
The requirement $0<\det(-\mathcal{K}_X,w_{02})$ yields
$$
\frac{w_{02}^1}{1+w_{02}^1} \ < \ \frac{l_{11}+l_{21}}{2l_{11}l_{21}} .
$$
Since the left side is at least $1/2$, we get $l_{11}=1$.
Now Remark~\ref{rem:posstrata} with $\gamma_{11,12,21,22}$
implies $l_{21}=w_{02}^1+1$.
Substituting these equalities in the inequality above we arrive at
$l_{21}<3$, therefore we have $l_{21}=2$
(for $l_{21}=1$ refer to situation~2C).
Taking $P$ into account, we use admissible operations
and equalities from $P\cdot Q^T=0$ and achieve
$$
P \ = \ 
\left[
\begin{array}{rrrrcr}
-1 & -1 & 1 & 1 & 0 & 0
\\
-1 & -1 & 0 & 0 & 2 & 1
\\
0 & 1 & d_{111} & 0 & -2d_{111}-1 & -1
\\
0 & 0 & d_{211} & 0 & -2d_{211} & 0
\end{array}
\right] ,
$$
where $0\le d_{111} < d_{211}$ holds.
In order to find an upper bound for $d_{211}$
we turn to the lineality part $A_{X,0}^c$ of the anticanonical complex.
Its vertices are
\begin{align*}
u_1 =& \ \frac{1}{2}( d_{111}-1,d_{211} ),
& u_2 =& \ u_1 \ + \ \left(\frac{1}{2},0\right), \\
u_3 =& \ \frac{1}{2}( -1,0 ),
& u_4 =& \ u_3 \ + \ \left(\frac{5}{6},0\right), \\
u_5 =& \ \frac{1}{3}( -2d_{111}-1,-2d_{211} ),
& u_6 =& \ u_5 \ + \ \left(\frac{2}{3},0\right).
\end{align*}
Consider $C:=\conv(u_2,u_3,u_4)\subset A_{X,0}^c$.
The point $u_3$ lies over the bisection of the first orthant
and, because of the terminality of $X$,
$C$ does not contain integral points. We conclude $d_{211}<20$.
With the MDSpackage~\cite{MDS} we find out that
$(d_{111},d_{211})$ assumes the value $(0,1)$ and delivers variety No.~5.

\medbreak
\emph{Situation}~2B: homogeneity of the relation
yields $l_{02}=l_{12}=w_{02}^2=1$ for the second component and
$l_{01}+w_{02}^1=l_{11}w_{11}^1=w_{21}^1$ for the first component.
With Remark~\ref{rem:posstrata} applied to $\gamma_{11,12,21,22}$
we conclude $w_{11}^1=1$, hence
$w_{21}^1=l_{21}$ and $w_{02}^1=l_{11}-l_{01}$.
We can discharge the possibility that
$-\mathcal{K}_X\in\cone(w_{02},w_{22})^\circ$ holds,
since in that case $\gamma_{02,12,22}$ is a relevant face
and $w_{02}^1=1$ follows, contradicting the fact that
the anticanonical class lies in that prescribed cone.
Thus $-\mathcal{K}_X\in\cone(w_{01},w_{02})^\circ$ holds.
In particular $\det(-\mathcal{K}_X,w_{02})>0$ holds,
which implies $l_{11}=l_{01}+1$.
We use admissible operations and $P\cdot Q^T=0$
and reach
$$
P \ = \ 
\left[
\begin{array}{rrrrrr}
-l_{01} & -1 & l_{01}+1 & 1 & 0 & 0
\\
-l_{01} & -1 & 0 & 0 & 1 & 1
\\
d_{101} & 0 & -d_{101} & -1 & 0 & 1
\\
d_{201} & 0 & -d_{201} & 0 & 0 & 0
\end{array}
\right] ,
$$
with $0\le d_{101} < d_{201}$.
We find bounds on $d_{201}$ and $l_{01}$
by considering the lineality part $A_{X,0}^c$,
whose vertices are
\begin{align*}
u_1 =& \ \frac{1}{l_{01}+1}( d_{101}-l_{01},d_{201} ),
& u_2 =& \ u_1 \ + \ \left(\frac{l_{01}}{l_{01}+1},0\right), \\
u_3 =& \ \frac{1}{2}( -1,0 ),
& u_4 =&  \ \frac{1}{2l_{01}+1}( l_{01}^2+d_{101}+l_{01} , d_{201} ) \\
u_5 =& \ \frac{1}{l_{01}+2}( -d_{101},-d_{201} ),
& u_6 =& \ u_5 \ + \ \left(\frac{l_{01}+1}{l_{01}+2},0\right).
\end{align*}
In particular, consider the width $l_C$ of
$C := \conv(0,u_1,u_2,u_4)$
at the height $h(u_4)$ of $u_4$, i.e.
$$
l_C \ = \ \frac{l_{01}(l_{01}+2)}{2l_{01}+1} . 
$$
Since $l_{01}>1$ (otherwise we are in configuration~A),
the width $l_C$ is greater than $1$ and as a consequence
$h(u_4)<1$ holds by terminality. This gives $d_{201}\le 2l_{01}$.
The length of the line segment $A_{X,0}^c\cap\{y=0\}$
increases when $l_{01}$ increases.
By terminality, it cannot be greater than 2,
hence we conclude $l_{01}<5$.
The MDSpackage~\cite{MDS} finds a lattice point in $A_X^c$
for each variety defined by such data, hence
this situation does not provide terminal varieties.

\medbreak
\emph{Situations}~2C~\emph{and}~2D:
homogeneity delivers
$w_{11}^1=w_{21}^1=l_{01}+l_{02}w_{02}^1$.
Hence, by Remark~\ref{rem:posstrata},
the relevant face $\gamma_{11,12,21,22}$ yields
$w_{11}^1=1$. Since all terms on the right side of the equation
are greater or equal to one, we reach a contradiction.

\medbreak
\emph{Disposition}~3: we start with the following degree matrix $Q$:
$$
Q \ = \ 
\left[
\begin{array}{cc|cc|cc}
w_{01}^1 & w_{02}^1 & w_{11}^1 & 0 & w_{21}^1 & 0
\\
w_{01}^2 & w_{02}^2 & 0 & w_{12}^2 & 0 & w_{22}^2
\end{array}
\right],
$$
where we could assume $\Eff(X)=\QQ^2_{\ge0}$
thanks to Remark~\ref{rem:posstrata} applied to $\gamma_{11,12,21,22}$.
We can assume that $-\mathcal{K}_X\in\cone(w_{01},w_{12})^\circ$ holds,
thus $\gamma_{01,12,22}$ is a relevant face and $w_{01}^1=1$ follows.
If $-\mathcal{K}_X\in\cone(w_{02},w_{12})^\circ$ holds,
then $\gamma_{02,11,21}$ is a relevant face and $w_{02}^2=1$ would hold,
contradicting $\det(w_{01},w_{02})>0$.
Hence $-\mathcal{K}_X\in\cone(w_{01},w_{02})^\circ$,
$\gamma_{02,12,22}$ is a relevant face and $w_{02}^1=1$ holds.

\medbreak
\emph{Situation}~3A: 
by homogeneity of the relation
$2=\mu^1=l_{11}w_{11}^1=l_{21}w_{21}^1$ holds.
At least one of the exponents is equal to two,
since $\gamma_{11,12,21,22}$ is relevant
and Remark~\ref{rem:posstrata} can be applied to it.
Let $l_{11}=2$, so $l_{21}\in\{1,2\}$.
Using admissible operations we can assume
$d_{101}=d_{201}=d_{202}=0$ and $d_{102}=1$.
Then Proposition~\ref{prop:combmin-char}
with $\{3,5\}$ and  $\{4,6\}$, together
with equations coming from $P\cdot Q^T=0$, delivers
$$
d_{121} \ = \ -\frac{1}{2}l_{21}(d_{111}+1), \qquad
d_{221} \ = \ -\frac{l_{21}d_{211}}{2}, \qquad
d_{222} \ = \ -\frac{l_{22}d_{212}}{l_{12}}.
$$
The vertices of the lineality part $A_{X,0}^c\subset\QQ^2$ are
\begin{align*}
u_1 =& \ \frac{l_{21}( 2d_{112}-l_{12}d_{111}-l_{12} , 2d_{212}-l_{12}d_{211} )}{2(l_{12}+l_{21})},
& u_2 =& \ u_1 \ + \ \left(\frac{l_{12}l_{21}}{l_{12}+l_{21}},0\right), \\
u_3 =& \ \frac{l_{21}}{l_{21}+2}( -1,0 ),
& u_4 =& \ u_3 \ + \ \left(\frac{2l_{21}}{l_{21}+2},0\right), \\
u_5 =& \ \frac{( l_{12}(2d_{122}+l_{22}d_{111}) , l_{22}(l_{12}d_{211}-2d_{212}) )}{l_{12}(l_{22}+2)},
& u_6 =& \ u_5 \ + \ \left(\frac{2l_{22}}{l_{22}+2},0\right).
\end{align*}
We go through both cases $l_{21}=1,2$.

First assume that $l_{21}=1$ holds.
Then we achieve $d_{121}=d_{221}=0$ by admissible operations,
and $P\cdot Q^T=0$ also yields $d_{111}=-1$.
In order for $u_3$ and $u_4$ to be both vertices,
$l_{12}=1$ must hold.
For $l_{22}>1$, the intersection of $\conv(0,u_5,u_6)$ with
the line $\{y=-1\}$ has length one, thus contains
an integral point and contradicts terminality,
whereas $l_{22}=1$ will be handled in situation~3B.

Now assume that $l_{21}=2$ holds.
By admissible operations we achieve
$0\le d_{111},d_{211} < 2$.
Since $v_{11}$ and $v_{21}$ are primitive,
we arrive at $d_{111}=0$ and $d_{211}=1$.
In order for $u_3$ and $u_4$ to be both vertices,
at least one between $l_{12}$ and $l_{22}$
is equal to one.
Since Remark~\ref{rem:posstrata} applies to
$\gamma_{11,12,21,22}$, they cannot be both equal to one.
Without loss of generality say $l_{12}=1$ and $l_{22}\ge2$.
Therefore $A_{X,0}^c\cap\{y=0\}$ has length one
and the length of the edge $\overline{u_5u_6}$ is at least one.
This means $\lvert u_5^2\rvert < 1$, i.e. $d_{212}=0,1$.
Using homogeneity in the second component and once again $P\cdot Q^T=0$
we arrive at
$$
Q \ = \ 
\left[
\begin{array}{cc|cc|cc}
1 & 1 & 1 & 0 & 1 & 0
\\
l_{22}+d_{122} & -d_{122} & 0 & l_{22} & 0 & 1
\end{array}
\right].
$$
In particular the anticanonical class is
$-\mathcal{K}_X=(2,l_{22}+1)$.
The inequalities coming from $\det(w_{02},-\mathcal{K}_X)>0$
and $\det(w_{01},w_{02})>0$ are incompatible with
$d_{122}$ being an integer:
$$
-\frac{1}{2}l_{22}-\frac{1}{2} \ < \
d_{122} \ < \ -\frac{1}{2}l_{22}.
$$
Therefore we reach a contradiction.

\medbreak
\emph{Situation}~3B:
homogeneity delivers $l_{01}+l_{02}=l_{11}w_{11}^1=w_{21}^1$
and $l_{12}w_{12}^2=w_{22}^2$.
With $\gamma_{11,12,21,22}$ relevant face we conclude
$w_{11}^1=1=w_{12}^2$.
Therefore the anticanonical class is
$-\mathcal{K}_X=(3,w_{01}^1+w_{02}^1+1)$.
Now $\det(w_{01},w_{02})>0$ and $\det(w_{02},-\mathcal{K}_X)>0$
yield $w_{02}^2<1$, a contradiction to the disposition.

\medbreak
\emph{Situations}~3C~\emph{and}~3D: the
same argument as situations~2C and~2D above works here too.

\medbreak
\emph{Disposition}~4:
without loss of generality we assume that
$-\mathcal{K}_X\in\cone(w_{01},w_{21})^\circ$ holds.
In particular $\gamma_{01,11,21}$ and $\gamma_{01,11,22}$
are relevant faces, to which Remark~\ref{rem:posstrata} applies;
we arrive at
$$
Q \ = \ 
\left[
\begin{array}{cc|cc|cc}
w_{01}^1 & 0 & w_{11}^1 & 0 & w_{21}^1 & w_{21}^1
\\
0 & w_{02}^2 & 0 & w_{12}^2 & 1 & 1
\end{array}
\right].
$$

\medbreak
\emph{Situation}~4A:
we use homogeneity of the relation and
Remark~\ref{rem:posstrata} together with relevant faces
$\gamma_{01,02,11,12}$ and $\gamma_{01,02,12,21,22}$
in the usual ways and achieve
$$
Q \ = \ 
\left[
\begin{array}{cc|cc|cc}
l_{11} & 0 & 1 & 0 & 1 & 1
\\
0 & l_{11} & 0 & 1 & 1 & 1
\end{array}
\right].
$$
Moreover we use admissible operations on $P$ to achieve
$d_{101}=d_{201}=d_{202}=0$ and $d_{201}=1$.
The equations of $P\cdot Q^T=0$ allow us to write
\begin{align*}
v_{12} \ &= \ (l_{12}, 0, d_{112}, d_{212} ) , \\
v_{11} \ &= \ v_{12} \ + \ (0,0,l_{21}+l_{22},0) .
\end{align*}
This means that there are integral points on the segment
between $v_{11}$ and $v_{12}$, contradicting terminality.

\medbreak
\emph{Situation}~4B:
we have $l_{02}w_{02}^2=l_{12}w_{12}^2=2$.
We may assume that one of those exponents is greater than one,
otherwise see situation~4E.
Without loss of generality let $l_{12}=2$ and $w_{12}^2=1$.
Using all the equations coming from $P\cdot Q^T=0$
we can write
$$
P \ = \ 
\left[
\begin{array}{ccrrrr}
-l_{01} & -l_{02} & l_{11} & 2 & 0 & 0
\\
-l_{01} & -l_{02} & 0 & 0 & 1 & 1
\\
-\frac{l_{01}d_{111}}{l_{11}}-\frac{1}{2}l_{01} & -\frac{1}{2}l_{02}(d_{112}+1) & d_{111} & d_{112} & 0 & 1
\\
-\frac{l_{01}d_{211}}{l_{11}} & -\frac{1}{2}l_{02}d_{212} & d_{211} & d_{212} & 0 & 0
\end{array}
\right] ,
$$
$$
Q \ = \ 
\left[
\begin{array}{cc|cc|cc}
\frac{2}{l_{01}}w_{21}^1 & 0 & \frac{2}{l_{11}}w_{21}^1 & 0 & w_{21}^1 & w_{21}^1
\\
0 & \frac{2}{l_{02}} & 0 & 1 & 1 & 1
\end{array}
\right].
$$
In particular we see that $l_{02}=1,2$ holds.

First we rule out the case $l_{02}=1$;
if it holds, we can reach $d_{112}=1$ and $d_{212}=0$
by means of admissible operations.
The vertices of $A_{X,0}^c$ are
\begin{align*}
u_1 =& \ \frac{1}{l_{11}+1}( d_{111}-l_{11},d_{211} ),
& u_2 =& \ u_1 \ + \ \left(\frac{l_{11}}{l_{11}+1},0\right), \\
u_3 =& \ \frac{1}{3}( -1,0 ),
& u_4 =&  \ u_3 \ + \ \left(\frac{2}{3},0\right), \\
u_5 =& \ \frac{2l_{01}}{(l_{01}+2)l_{11}}(-d_{111},-d_{211} ),
& u_6 =& \ u_5 \ + \ \left(\frac{2l_{01}}{l_{01}+2},0\right).
\end{align*}
The only way to ensure that $u_3$ and $u_4$ are vertices is to set $l_{11}=1$.
Hence the weights relative to the relevant face $\gamma_{02,11,21}$
are $(2,0)$, $(0,2)$ and $(1,1)$. These
points do not generate $\ZZ^2$ as a lattice. Therefore
the stratum $X(\gamma_{02,11,21})$ consists of singular points,
contradicting terminality by Remark~\ref{rem:posstrata}.

Now assume $l_{02}=2$.
Again, we look at the vertices of $A_{X,0}^c$ and
have to set $l_{11}=1$, after which we achieve
$d_{111}=d_{211}=0$ by admissible operations.
Moreover, since $v_{01}$ is primitive, $l_{01}=2$ holds.
We have
$$
P \ = \ 
\left[
\begin{array}{rcrrrr}
-2 & -2 & 1 & 2 & 0 & 0
\\
-2 & -2 & 0 & 0 & 1 & 1
\\
-1 & -1-d_{112} & 0 & d_{112} & 0 & 1
\\
0 & -d_{212} & 0 & d_{212} & 0 & 0
\end{array}
\right]
$$
and the vertices of the lineality part are
\begin{align*}
u_1 =& \ \frac{1}{2}( d_{112}-1,d_{212} ),
& u_2 =& \ u_1 \ + \ \left(1,0\right), \\
u_3 =& \ \frac{1}{2}( -1,0 ),
& u_4 =&  \ u_3 \ + \ \left(1,0\right), \\
u_5 =& \ \frac{1}{3}(-d_{112}-1,-d_{212} ),
& u_6 =& \ u_5 \ + \ \left(\frac{2}{3},0\right).
\end{align*}
Since $\conv(u_1,u_2,u_3,u_4)$
does not contain integral points other than the origin, we conclude
$d_{212}=1$ and, with an admissible operation, $d_{112}=0$.
These data define a valid variety, namely No.~6.

\medbreak
\emph{Situation}~4E:
homogeneity implies
$\mu^2 = w_{02}^2=w_{12}^2=l_{21}+l_{22}$.
Moreover, Remark~\ref{rem:posstrata} applied
on $\gamma_{01,02,11,12}$ prescribes $w_{02}^2=1$.
This means $l_{21}+l_{22}=1$, a contradiction.

\medbreak
\emph{Disposition}~5:
applying Remark~\ref{rem:posstrata}
to $\gamma_{02,11,21}$ and $\gamma_{02,11,22}$
we obtain the degree matrix
$$
Q \ = \ 
\left[
\begin{array}{cc|cc|cc}
w_{01}^1 & 0 & 1 & w_{12}^1 & w_{21}^1 & 0
\\
w_{01}^2 & 1 & 0 & w_{12}^2 & 0 & w_{22}^2
\end{array}
\right].
$$

\medbreak
\emph{Situation}~5A:
we divide this situation into three subcases.
They differ from one another by the Mori chamber $\alpha_i\subset\Eff(X)$
in which the anticanonical class $-\canK_X$ lies.
In all three cases we use admissible operations and
bring the defining matrix $P$ into the following shape:
$$
P \ = \ 
\left[
\begin{array}{rcrrrr}
-1 & -1 & l_{11} & l_{12} & 0 & 0
\\
-1 & -1 & 0 & 0 & l_{21} & l_{22}
\\
0 & 1 & d_{111} & d_{112} & d_{121} & d_{122}
\\
0 & 0 & d_{211} & d_{212} & d_{221} & d_{222}
\end{array}
\right] .
$$

\smallbreak
\emph{Situation}~5A~\emph{with $-\canK_X\in\alpha_1$}:
here $\gamma_{02,12,22}$ is a relevant face, hence
Remark~\ref{rem:posstrata} yields $w_{12}^1=1$.
Homogeneity of the relation implies
\begin{gather*}
w_{01}^1 \ = \ l_{11}+l_{12}, \qquad
w_{21}^1 \ = \ \frac{l_{11}+l_{12}}{l_{21}}, \\
w_{12}^2 \ = \ \frac{w_{01}^2+1}{l_{12}}, \qquad
w_{22}^2 \ = \ \frac{w_{01}^2+1}{l_{22}}.
\end{gather*}
Note that $\gamma_{01,02,21,22}$ is also a relevant face.
Hence $w_{21}^1=1$ holds, i.e. $l_{21}=l_{11}+l_{12}$.
Since the anticanonical class lies in $\alpha_1$,
we have $\det(w_{12},-\canK_X)>0$.
This implies $l_{12}>2l_{22}$, in particular $l_{12}\ge3$.
Through equations from $P \cdot Q^T=0$ we have
$$
d_{111}  =  -d_{112}-d_{121}, \qquad
d_{211}  =  -d_{212}-d_{221}, \qquad
d_{212}  =  -\frac{l_{12}d_{222}}{l_{22}}.
$$
Consider the vertices $u_1$ and $u_2$
of the lineality part $A_{X,0}^c$, defined by the elementary big cones
$\cone(v_{01},v_{1j},v_{21})$, for $j=1,2$ respectively.
The segment line $\overline{u_1u_2}$ intersects
the $x$-axis in the point $(0,0,(l_{11}+l_{12})/3,0)$.
Since $l_{12}\ge3$ holds, the lattice point $(0,0,1,0)$
lies in $\conv(0,u_1,u_2)\subset A_{X,0}^c$,
a contradiction to terminality.

\smallbreak
\emph{Situation}~5A~\emph{with $-\canK_X\in\alpha_2$}:
we use homogeneity of the relation and Remark~\ref{rem:posstrata}
on the relevant face $\gamma_{01,02,21,22}$ to arrive at
$$
Q \ = \ 
\left[
\begin{array}{cc|cc|cc}
l_{21} & 0 & 1 & (l_{21}-l_{11})/l_{12} & 1 & 0
\\
w_{01}^2 & 1 & 0 & (w_{01}^2+1)/l_{12} & 0 & (w_{01}^2+1)/l_{22}
\end{array}
\right].
$$
In particular $l_{21}\ge l_{11}+l_{12}$ holds.
The matrix $Q$ allows us to compute the anticanonical class $-\canK_X$
according to Remark~\ref{rem:fanoRAP}.
Since $-\canK_X\in\alpha_2$ holds, we have $\det(-\canK_X,w_{12})>0$.
Writing down this condition explicitly we obtain $l_{21}<l_{11}+2l_{22}$.
Now we turn to the matrix $P$.
Using equations from $P \cdot Q^T=0$ we determine
\begin{align*}
d_{111} \ &= \ -\frac{d_{112}(l_{21}-l_{11})}{l_{12}} - d_{121}, \\
d_{211} \ &= \ -\frac{d_{222}(l_{21}-l_{11})}{l_{22}} - d_{221}, \\
d_{212} \ &= \ -\frac{l_{12}d_{222}}{l_{22}}.
\end{align*}
Now consider the vertices $u_1$ and $u_2$
of the lineality part $A_{X,0}^c$, defined by the elementary big cones
$\cone(v_{02},v_{1j},v_{21})$, for $j=1,2$ respectively.
The segment line $\overline{u_1u_2}$ intersects
the $x$-axis in the point
$$
\Bigl(0,0,\frac{l_{12}l_{21}}{2l_{12}+l_{21}-l_{11}},0\Bigr).
$$
By terminality the lattice point $(0,0,1,0)$ does not lie
$\conv(0,u_1,u_2)\subset A_{X,0}^c$.
This implies $l_{11}=l_{12}=1$.
In this special situation, we achieve
$$
P \ = \ 
\left[
\begin{array}{rcrrrr}
-1 & -1 & 1 & 1 & 0 & 0
\\
-1 & -1 & 0 & 0 & l_{21} & l_{22}
\\
0 & 1 & 0 & d_{112} & -d_{112}(l_{21}-1) & -l_{22}d_{112}-1
\\
0 & 0 & 0 & d_{212} & -d_{212}(l_{21}-1) & -l_{22}d_{212}
\end{array}
\right] ,
$$
$$
Q \ = \ 
\left[
\begin{array}{cc|cc|cc}
l_{21} & 0 & 1 & l_{21}-1 & 1 & 0
\\
l_{22}-1 & 1 & 0 & l_{22} & 0 & 1
\end{array}
\right],
$$
where we can assume $0\le d_{112} < d_{212}$ and $l_{21},l_{22}\ge2$.
Consider the leaf $A_X^c \cap \lambda_2$ of the anticanonical
complex, embedded in $\QQ^3$ by removing the first coordinate 
(which always equals zero) from its points.
Define $B\subset\QQ^3$ as the convex hull of the following points
\begin{gather*}
b_1 := (l_{21},d_{121},d_{221}), \qquad 
b_2 := (l_{22},d_{122},d_{222}), \\
a_1 := (-1,0,0), \qquad a_2 := (-1,1,0), \\
a_3 := (-1,d_{112},d_{212}), \qquad
a_4 := (-1,d_{112}+1,d_{212}).
\end{gather*}
Then the leaf $A_X^c \cap \lambda_2$ corresponds to
the intersection $B\cap\{(x,y,z)\in\QQ^3; x\ge0\}$.
By terminality, the only integral points of the leaf are
$b_1$, $b_2$ and the origin.
Hence $B$ is a lattice polytope containing the origin as only interior point and,
by~cite{Ka:2010}, $\vol(B)$ is bounded by $12$.
This gives the condition
$$
d_{212}(l_{21}+l_{22}+2) \ < \ 36 .
$$
Therefore all entries of $P$ are bounded.
We use the MDSpackage~\cite{MDS} to check all possibilities.
It turns out that none of the matrices defines a terminal variety.

\smallbreak
\emph{Situation}~5A~\emph{with $-\canK_X\in\alpha_3$}:
here $\gamma_{01,11,21}$ is a relevant face, so
Remark~\ref{rem:posstrata} yields $w_{01}^2=1$.
Using homogeneity of the relation and $\gamma_{01,02,21,22}$
relevant face we arrive at
$$
Q \ = \ 
\left[
\begin{array}{cc|cc|cc}
l_{21} & 0 & 1 & (l_{21}-l_{11})/l_{12} & 1 & 0
\\
1 & 1 & 0 & 2/l_{12} & 0 & 2/l_{22}
\end{array}
\right].
$$
Since the anticanonical class lies in $\alpha_3$,
we have $\det(-\canK_X,w_{01})>0$.
This condition is equivalent to the inequality
$$
l_{11}l_{22}+2l_{12}l_{21}-2l_{12}l_{22}+l_{21}l_{22}<0.
$$
By looking at the matrix $Q$ we see that $l_{12},l_{22}\in\{1,2\}$ holds.
None of the possible combinations satisfies the condition above,
hence we reach a contradiction.

\medbreak
\emph{Situation}~5B: homogeneity implies
$w_{21}^1=l_{01}w_{01}^1$ and $w_{22}^2=l_{12}w_{12}^2$.
Using Remark~\ref{rem:posstrata}, respectively
with $\gamma_{01,02,21,22}$ and $\gamma_{11,12,21,22}$,
we obtain $w_{01}^1=1$ and $w_{12}^2=1$.
Since $w_{01}^2,w_{12}^1>0$ holds,
we arrive at a contradiction with the disposition of the weights,
because $\det(w_{01},w_{12})>0$ holds.

\medbreak
\emph{Situations}~5C~\emph{and}~5F: homogeneity
yields $w_{01}^1=w_{21}^1=l_{11}+l_{12}w_{12}^1$.
By Remark~\ref{rem:posstrata} with
$\gamma_{01,02,21,22}\in\rlv(X)$, we have $w_{01}^1=1$
but then $l_{11}+l_{12}w_{12}^1=1$ holds.
This is a contradiction, since all values
appearing on the left side are at least one.

\begin{proof}[Proof of Theorem~\ref{thm:combmin-list}]
Lemma~\ref{lem:terminal-combmin} lists all possible
constellations for the defining matrix $P$.
The results of the cases treated in this Section~\ref{sec:Qfact-classif},
combined with the remaining cases which are treated in~\cite[Chapter~3]{Nico},
deliver the list from the assertion.
Furthermore, by comparing the data, one directly sees that
any two varieties from the list are non-isomorphic.
\end{proof}

\begin{remark}
Appendix~A of~\cite{Nico} contains detailed information about the
$\QQ$-factorial varieties of Theorem~\ref{thm:combmin-list},
including possible defining matrices $P$.
\end{remark}

\begin{remark}
For $\KK = \CC$,
any Fano variety $X$ with at most log terminal singularities,
has finitely generated divisor class group $\Cl(X)$; see~\cite[Sec.~2.1]{IsPr:1999}.
If $X$ comes in addition with a torus action of complexity one,
then $X$ is rational and its Cox ring is finitely generated;  see~\cite[Remark~4.4.1.5]{ArDeHaLa}.
Therefore the assumption of rationality can be omitted
in Theorem~\ref{thm:combmin-list} for $\KK=\CC$.
Alternatively, rationality can be replaced by the property
``$\Cl(X)$ is finitely generated''.
\end{remark}


\section{Growing the anticanonical complex}
\label{sec:growanticancomp}

This last Section outlines the future road towards a complete classification
of terminal Fano threefolds of complexity one.

Let $X$ be a normal projective variety of complexity one.
The Cox ring $\Cox(X)$ is
$$
R \ := \ \Cox(X) \ = \ \KK[T_1,\ldots,T_r,T_{r+1}]/\!\braket{g_1,\ldots,g_s}
$$
with grading given by $K:=\Cl(X)$ and degrees $w_i := \deg(T_i)\in K$.
Recall that the weight $w_i$ is \emph{exceptional},
if $w_i \notin \cone(w_j; j\neq i)$ holds.

\begin{remark}
\label{rem:conesR}
Paraphrasing Remarks~\ref{rem:fanoRAP} and~\ref{rem:divcones} we say that
the \emph{moving cone} and the \emph{anticanonical class} of $R$ are respectively
\begin{align*}
\Mov(R) \ &:= \  \bigcap_{i=0}^{r+1} \cone(w_j ; j\neq i) \ \subset \ K_\QQ , \\
\kappa(R) \ &:= \  \sum_{i=1}^{r+1} w_i - \sum_{j=1}^s \deg(g_j) \ \in \ K.
\end{align*}
We say that $R$ is \emph{Fano} if $\kappa(R)\in\Mov(R)^\circ$ holds. 
The last definition is justified by the fact that 
$\kappa(R)\in\Mov(R)^\circ$ characterizes a ring $R$
that can be realized as Cox ring of a normal Fano variety,
which we then call $X_F(R)$.
\end{remark}

\begin{definition}
\label{defi:terminalRAP}
Let $R$ be Fano as in Remark~\ref{rem:conesR}.
We say that $R$ is \emph{terminal}
\emph{(resp. canonical, log-terminal, $\QQ$-factorial, etc...)}
if the Fano variety $X_F(R)$ has that property.
\end{definition}

\begin{construction}
\label{constr:contract}
Let $R$ and $K$ be as above. Assume that $w_{r+1}$ is exceptional.
Define polynomials $g_j' := g_j|_{T_{r+1}=1}$
and the ring $R' := \KK[T_1,\ldots,T_r]/\!\braket{g_1',\ldots,g_s'}$.
Its grading is given by $K' := K/\!\braket{w_{r+1}}$
via the canonical projection map $\pi := K\to K'$
and the degrees $\deg(T_i) := w_i' := \pi(w_i)$.
Then $R'$ is again the Cox ring of a normal projective variety of complexity one.
\end{construction}

\begin{remark}
\label{rem:contr-diagram}
We have the commutative diagram with exact rows
$$
\begin{tikzpicture}[scale=0.8]
  \node (A) at (-0.5,0) {$0$};
  \node (B) at (2,1) {$K_\QQ$};
  \node (C) at (2,-1) {$K'_\QQ$};
  \node (D) at (5,1) {$\QQ^{r+1}$};
  \node (E) at (5,-1) {$\QQ^r$};
  \node (F) at (7.5,0) {$\QQ^n$};
  \node (G) at (9.5,0) {$0$};
\draw[->] (B) to (A);
\draw[->] (C) to (A);
\draw[->] (B) to node[right] {{\tiny $\pi$}} (C);
\draw[->] (D) to node[above] {{\tiny $Q$}} (B);
\draw[->] (E) to node[below] {{\tiny $Q'$}} (C);
\draw[->] (D) to node[left] {{\tiny $\pi_r$}} (E);
\draw[->] (F) to node[above] {{\tiny $P^*$}} (D);
\draw[->] (F) to node[below] {{\tiny $P'^*$}} (E);
\draw[->] (G) to (F);
\end{tikzpicture}
$$
hence in particular $\pi \circ Q = Q' \circ \pi_r$ and $(P')^*=\pi_r \circ P^*$.
Therefore considering the presentation $R=R(A,P)$,
the ring $R'$ is given as $R'=R(A,P')$, where $P'$ is obtained from $P$
by deleting the column corresponding to the exceptional weight.
\end{remark}

\begin{proposition}
\label{prop:fano-contraction}
Let $R$ and $R'$ be as in Construction~\ref{constr:contract}.
If $R$ is (terminal) Fano, then so is $R'$.
\end{proposition}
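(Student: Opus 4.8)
The plan is to prove the two assertions separately, using the commutative diagram of Remark~\ref{rem:contr-diagram} for the Fano part and the vertex description of the anticanonical complex for the terminal part. For the Fano statement I would transport data along the surjection $\pi\colon K_\QQ\to K'_\QQ$, whose kernel is the line spanned by $w_{r+1}$, and first check $\pi(\kappa(R))=\kappa(R')$. Every monomial of a relation $g_j$ has $K$-degree $\deg(g_j)$, and passing to $g_j'=g_j|_{T_{r+1}=1}$ only subtracts multiples of $w_{r+1}$, which $\pi$ annihilates; hence each monomial of $g_j'$ has $K'$-degree $\pi(\deg(g_j))$, so $\deg(g_j')=\pi(\deg(g_j))$. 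Combined with $\pi(w_{r+1})=0$ this gives
$$
\pi(\kappa(R)) \ = \ \sum_{i=1}^{r} w_i' - \sum_{j=1}^{s}\pi(\deg(g_j)) \ = \ \sum_{i=1}^{r} w_i' - \sum_{j=1}^{s}\deg(g_j') \ = \ \kappa(R').
$$

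Next I would compare moving cones. A direct computation gives $\pi(\cone(w_\ell;\ell\neq i))=\cone(w_\ell';\ell\neq i)$, and since $w_{r+1}'=0$ the factor $i=r+1$ becomes redundant; thus the elementary inclusion $\pi(\bigcap_i C_i)\subseteq\bigcap_i\pi(C_i)$ yields $\pi(\Mov(R))\subseteq\Mov(R')$. Because $\Mov(R)$ is full-dimensional in $K_\QQ$, its image under the surjection $\pi$ is full-dimensional in $K'_\QQ$, and hence so is $\Mov(R')\supseteq\pi(\Mov(R))$. As $\pi$ is an open map, $\pi(\Mov(R)^\circ)$ is an open subset of $\pi(\Mov(R))$, so monotonicity of the interior gives $\pi(\Mov(R)^\circ)\subseteq\pi(\Mov(R))^\circ\subseteq\Mov(R')^\circ$. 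Therefore $\kappa(R')=\pi(\kappa(R))\in\Mov(R')^\circ$, i.e.\ $R'$ is Fano.

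For terminality I would compare the anticanonical complexes $A_X^c$ and $A_{X'}^c$ of $X=X_F(R)$ and $X'=X_F(R')$. By Remark~\ref{rem:contr-diagram} we have $R'=R(A,P')$ with the \emph{same} matrix $A$, so $\Sigma$ and $\Sigma'$ share their ambient space and $\trop(X')=\trop(X)$. Writing $v_{r+1}$ for the deleted column, the combinatorial input is that the rays of $\Sigma'$ are those of $\Sigma$ except $\cone(v_{r+1})$, and that the $P'$-elementary cones are exactly the $P$-elementary cones avoiding $v_{r+1}$. Feeding this into the two $\conv$-formulas preceding Remark~\ref{rem:logterm2lbound}, each leaf and the lineality part of $A_{X'}^c$ is the convex hull of a subset of the generators of the corresponding part of $A_X^c$; hence $A_{X'}^c\subseteq A_X^c$ as supports. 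Consequently every lattice point of $A_{X'}^c$ is a lattice point of $A_X^c$, and by terminality of $X$ (Theorem~\ref{thm:BHHNmain}(iii)) it is $0$ or a primitive generator $v_\varrho$ of $\Sigma$.

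The crux, and the step I expect to be the main obstacle, is to rule out that the deleted generator $v_{r+1}$ survives as a lattice point of $A_{X'}^c$. Here I would use that $v_{r+1}$, being a primitive generator of $\Sigma$, is a vertex of $A_X^c$ by Theorem~\ref{thm:BHHNmain}(i), together with the observation that no vertex $v'_\sigma$ can coincide with a ray generator: since $X$ is log terminal one has $\ell_\sigma>0$, so $v'_\sigma$ lies in the relative interior of the cone $\sigma\in\Sigma$, whereas any ray of $\Sigma$ lying in $\sigma$ is a proper face of $\sigma$. Thus the generators of $A_{X'}^c$ form a subset of those of $A_X^c$ that omits $v_{r+1}$ and contains no other generator equal to it; as $v_{r+1}$ is an extreme point of $A_X^c$, it lies outside the smaller convex hull, i.e.\ $v_{r+1}\notin A_{X'}^c$. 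The lattice points of $A_{X'}^c$ are then exactly $0$ and the primitive generators of $\Sigma'$, so $X'$ is terminal. In the non-$\QQ$-factorial case one replaces the quoted formula by the full vertex description of~\cite[Section~3]{ABHW}; the monotonicity of the convex hulls and the relative-interior argument for the interior vertices carry over, and this is the point that would require the most care.
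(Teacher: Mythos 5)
Your Fano-part argument is correct and is essentially the paper's: both rest on $\kappa(R')=\pi(\kappa(R))$ and $\pi(\Mov(R))^\circ\subseteq\Mov(R')^\circ$. The terminality part, however, takes a different route that contains a genuine gap. You obtain $A_{X'}^c\subseteq A_X^c$ from the convex-hull description of the leaves and the lineality part, but that description (quoted from~\cite[Proposition~3.8]{ABHW}) is only valid for varieties already known to be log terminal \emph{and} $\QQ$-factorial. Applying it to $X'$ is circular: log terminality of $X'$ is a consequence of exactly what you are trying to prove, and $\QQ$-factoriality of $X'$ can genuinely fail even when $X$ is $\QQ$-factorial --- the paper states this explicitly in the remark immediately following Proposition~\ref{prop:fano-contraction}. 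So your main argument does not even cover the case where $X$ is $\QQ$-factorial, and the closing sentence ``replace the formula by the full vertex description'' is a pointer, not a proof. The paper sidesteps all of this by comparing the \emph{defining} data: from $\kappa(R')=\pi(\kappa(R))$ and $\pi_r(B(g_j))=B(g_j')$ one gets $B_{X'}\supseteq B_X$ straight from the definition of the anticanonical polyhedron, hence $A_{X'}\subseteq A_X$ by duality and $A_{X'}^c\subseteq A_X^c$; no hypotheses on $X'$ enter at any point. If you want to keep your approach, you must first establish log terminality of $X'$ independently (e.g.\ every $P'$-elementary cone is $P$-elementary, so the positivity/platonic conditions of Remark~\ref{rem:logterm2lbound} are inherited) and work with the general, non-$\QQ$-factorial vertex description throughout.

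On the positive side, you correctly identify a subtlety the paper glosses over: $A_{X'}^c\subseteq A_X^c$ by itself only shows that the lattice points of $A_{X'}^c$ lie in $\{0\}\cup\{v_\varrho;\ \varrho\in\Sigma^{(1)}\}$, and one must still exclude that the deleted column $v_{r+1}$ survives as a lattice point of $A_{X'}^c$, since it is not a ray of $\Sigma'$. Your extreme-point argument is the right idea here, but note that the step ``no $v'_{\sigma}$ equals a ray generator'' relies on the fan property (a ray of $\Sigma$ contained in $\sigma$ is a proper face of $\sigma$), which is available only when the $P'$-elementary cone $\sigma$ in question lies in $\Sigma$; this holds in the $\QQ$-factorial case, where $P$-elementary cones and elementary cones of $\Sigma$ coincide, but it is not automatic in general, so this step too must be reworked once the $\QQ$-factoriality assumption is dropped.
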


\begin{proof}
To prove that the Fano property is preserved by Construction~\ref{constr:contract},
one only need to see that
$$
\kappa(R') \ = \ \pi(\kappa(R)) \ \in \ 
\pi(\Mov(R))^\circ \ \subseteq \ \Mov(R')^\circ .
$$
In order to study terminality, define $X$ and $X'$ as
the Fano varieties corresponding to $R$ and $R'$ respectively.
The goal is to show that $A_{X'}^c \subseteq A_X^c$ holds,
because then the assertion holds by Theorem~\ref{thm:BHHNmain}.
This can be done by considering the commutative diagram of Remark~\ref{rem:contr-diagram}
and proving that $B_{X'}\supseteq B_X$ holds.
For this one uses $\kappa(R') = \pi(\kappa(R))$ and $\pi_r(B(g_j))=B(g_j')$.
\end{proof}

\begin{remark}
Note that Proposition~\ref{prop:fano-contraction}
does not hold for $\QQ$-factoriality.
There are examples where $X(A,P)$ is $\QQ$-factorial
but $X(A,P')$ is not, and vice versa.
\end{remark}

\begin{corollary}
\label{cor:contractseries}
Let $X=X(A,P)$ be a terminal Fano threefold
of complexity one.
If $X$ is not combinatorially minimal,
then there is an equivariant small quasimodification $X \dasharrow X'$
onto a combinatorially terminal Fano threefold $X'$ of complexity one (which could be toric).
\end{corollary}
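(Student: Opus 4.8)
The plan is to strip off exceptional weights one at a time by means of Construction~\ref{constr:contract} until combinatorial minimality is reached. First I would invoke Remark~\ref{rem:combmin}: since $X$ is not combinatorially minimal, some weight $w$ is exceptional, that is, it spans an extremal ray of $\Eff(X)$ carrying the degree of no other generator. Feeding $w$ into Construction~\ref{constr:contract} and reading off Remark~\ref{rem:contr-diagram}, the resulting ring is $R(A,P')$, where $P'$ arises from $P$ by deleting the column $v$ attached to $w$; Lemma~\ref{lem:excepweight} guarantees that the remaining columns still generate $\QQ^{r+s}$ as a cone, so that $P'$ is again a legitimate defining matrix and $X' := X(A,P')$ is defined. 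Proposition~\ref{prop:fano-contraction} then furnishes that $X'$ is a terminal Fano variety.

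Next I would check that $X'$ is again a threefold of complexity one and identify the map $X \dasharrow X'$. This map is the equivariant contraction of the prime divisor $D_w$ determined by $w$, whose existence is exactly what the failure of combinatorial minimality provides; being birational it preserves dimension, so $X'$ is again three-dimensional, the two-torus action descends, and the complexity stays equal to one. As $D_w$ is contracted, the rank $\delta(X)$ of the divisor class group drops by one. If $w$ happens to be the sole generator of a $T$-block, that block disappears and the number of relations may fall; once fewer than three blocks survive the trinomials $g_I$ degenerate and $X'$ becomes toric, which accounts for the parenthetical clause of the statement.

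Finally I would iterate the step: each contraction strictly lowers the number $n+m$ of generators, so after finitely many stages no exceptional weight remains and the terminal Fano threefold produced is combinatorially minimal (or toric); composing the equivariant contractions yields the asserted map onto $X'$. The step I expect to be the main obstacle is the single-contraction bookkeeping when the exceptional weight is the unique variable of its block: there one must re-express the relations after the block vanishes, absorb the resulting redundancy as in Remark~\ref{rem:ci}, and confirm that the data still conform to Construction~\ref{constr:RAPdown} with the complexity intact. Equally delicate is pinning down the precise birational geometry of $X \dasharrow X'$---that it genuinely contracts the prime divisor $D_w$ and fits the framework used in the definition of combinatorial minimality---which again rests on the explicit column-to-weight dictionary of Remark~\ref{rem:contr-diagram}.
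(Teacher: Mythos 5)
Your proposal follows exactly the route the paper intends: the corollary is stated there without a separate proof, as the evident iteration of Construction~\ref{constr:contract} and Proposition~\ref{prop:fano-contraction}, and your steps (locate an exceptional weight via Remark~\ref{rem:combmin}, delete the corresponding column as in Remark~\ref{rem:contr-diagram} with Lemma~\ref{lem:excepweight} ensuring $P'$ is still a defining matrix, invoke Proposition~\ref{prop:fano-contraction}, and iterate until no exceptional weight is left, with termination guaranteed by the strict drop in $n+m$) are precisely that argument.

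One correction, which in fact simplifies your proof: the case you single out as the main obstacle --- an exceptional weight that is the unique variable of its block --- cannot occur. If $n_i=1$, then homogeneity of the relations gives $\mu = l_{i1}w_{i1}$, so $\mu$ lies on the extremal ray $\QQ_{\ge 0}w_{i1}$ of $\Eff(X)$; since $\mu$ is also the degree of every other monomial $T_j^{l_j}=\sum_k l_{jk}w_{jk}$ and an extremal ray is a face of $\Eff(X)$, every weight $w_{jk}$ would then lie on that same ray, contradicting the very definition of $w_{i1}$ being exceptional. Hence every exceptional $T$-variable sits in a block with $n_i\ge 2$, the column deletion of Remark~\ref{rem:contr-diagram} never destroys a block, and no re-derivation of the relations is needed. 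Note also that your sketched handling of that phantom case would not work as written: setting $T_{i1}=1$ with $n_i=1$ turns the trinomials into polynomials with constant terms, which are not of the shape $R(A,P)$ at all. The genuine mechanism behind the parenthetical ``(which could be toric)'' is different: deleting a column may leave a block consisting of a single variable with exponent one, so that $P'$ becomes redundant in the sense of Remark~\ref{rem:ci}; eliminating such variables removes a block together with a relation, and once $r\le 1$ the Cox ring is a polynomial ring, i.e. the resulting variety is toric. With this substitution your argument is complete and coincides with the paper's.
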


\begin{remark}
Much like in the case of toric Fano varieties with the growing Fano polytopes (see~\cite{Ka}),
Proposition~\ref{prop:fano-contraction} ensures that
we can approach the classification of terminal Fano varieties of complexity one
with purely combinatorial methods,
by taking the anticanonical complex of a combinatorially minimal variety and
successively adding new integral vertices, each time checking if all required properties still hold.
\end{remark}


\end{document}